\renewcommand\part{%
   \if@noskipsec \leavevmode \fi
   \par
   \addvspace{4ex}%
   \@afterindentfalse
   \secdef\@part\@spart}
\def\@part[#1]#2{%
    \ifnum \c@secnumdepth >\m@ne
      \refstepcounter{part}%
       \ifnum \thepart < 4
            \addcontentsline{toc}{part}{\thepart\hspace{1em}#1}%
       \else
             \addcontentsline{toc}{part}{A\hspace{1em}#1}%
       \fi
    \else
      \addcontentsline{toc}{part}{#1}%
    \fi
    {\parindent \z@ \raggedright
     \interlinepenalty \@M
     \normalfont
     \ifnum \c@secnumdepth >\m@ne
       \Large\bfseries \nobreakspace
       \par\nobreak
     \fi
     \huge \bfseries #2%
     \par}%
    \nobreak
    \vskip 3ex
    \@afterheading}
\def\@spart#1{%
    {\parindent \z@ \raggedright
     \interlinepenalty \@M
     \normalfont
     \huge \bfseries #1\par}%
     \nobreak
     \vskip 3ex
     \@afterheading}
\newtheorem{theorem}{Theorem}[section]
\newtheorem{remark}[theorem]{Remark}
\newtheorem{lemma}[theorem]{Lemma}
\newtheorem{question}[theorem]{Question}
\newtheorem{proposition}[theorem]{Proposition}
\newtheorem{corollary}[theorem]{Corollary}
\newenvironment{defn}[1][]{\refstepcounter{theorem}\begin{trivlist}
\item[\hskip \labelsep {\bfseries Definition  \thetheorem  \, \def\temp{#1}\ifx\temp\empty  #1\else  (#1)\fi
}]}   {\end{trivlist}}
\newenvironment{example}[1][]{\refstepcounter{theorem}\begin{trivlist}
\item[\hskip \labelsep {\bfseries Example  \thetheorem  \, \def\temp{#1}\ifx\temp\empty  #1\else  (#1)\fi
}]}   {\end{trivlist}}
\newcommand{\Aut}{\operatorname{Aut}}
\newcommand{\Out}{\operatorname{Out}}
\newcommand{\Z}{\mathbb{Z}}
\newcommand{\N}{\mathbb{N}}
\newcommand{\im}{\operatorname{img}}
\newcommand{\fim}{\operatorname{fimg}}
\newcommand{\bim}{\operatorname{bimg}}
\newcommand{\adj}{\operatorname{Adj}}
\newcommand{\rel}{\operatorname{Rel}}
\newcommand{\dom}{dom}
\newcommand{\norm}[1]{\left\lVert#1\right\rVert}
\newcommand{\PF}{\operatorname{PF}}
\newcommand{\Hom}{\operatorname{Hom}}
\newcommand{\Wlk}{\operatorname{Walks}}
\newcommand{\GBwlk}{\operatorname{NBWalks}}
\newcommand{\rt}{\operatorname{Route}}
\newcommand{\Flt}{\operatorname{Flat}}
\newcommand{\makeset}[2]{\left\lbrace #1 \;\middle|\;
  \begin{tabular}{@{}l@{}}
    #2
   \end{tabular}
  \right\rbrace}
  \title[Unindexed subshifts of finite type]{Unindexed subshifts of finite type and their connection to automorphisms of Thompson's groups}
\date{March 2022}
\thanks{The author is grateful for the partial support of the EPSRC research grant EP/R032866/1}
\begin{document}
\vspace*{-40pt}
\maketitle
\begin{center}
\author{Luke Elliott}
\end{center}
\tableofcontents

\begin{abstract}
For a finite digraph \(D\), we define the corresponding two-sided subshift of finite type \((X_D, \sigma_D)\) to be the dynamical system where \(X_D\) is the set of all bi-infinite walks through \(D\) and \(\sigma_D\) is the shift operator.
Two digraphs \(D_1\) and \(D_2\) are called \textit{shift equivalent} if there is \(m\geq 1\) such that \((X_{D_1}, \sigma_{D_1}^n)\) and \((X_{D_2}, \sigma_{D_2}^n)\) are topologically conjugate for all \(n\geq m\). They are called \textit{strong shift equivalent} if this holds for \(m=1\).

In this paper we introduce a new category which generalises the category of subshifts of finite type and topological conjugacy.
Our category gives two new notions of equivalence for digraphs which we call strong UDAF equivalence and weak UDAF equivalence. 
Strong UDAF equivalence is a coarser analogue of strong shift equivalence and weak UDAF equivalence is a coarser analogue of shift equivalence.
Both strong and weak UDAF equivalence still separate the \(n\)-leaf roses for all \(n\geq 2\) (the 1-vertex \(n\)-edge digraphs).
However, strong UDAF equivalence does not imply shift equivalence, in particular it equates the 2-leaf rose with the golden mean shift.

We also show that our category relates to the automorphism groups of the ``\(V\)-type" groups of Higman, Thompson and Brin. 
In particular for all \(n\geq 2\) the groups \(\Out(G_{n, n-1})\) and \(\Out(nV)\) can be seen as automorphism groups of specific objects in our category.

We explain a few equivalent ways of viewing strong UDAF equivalence, and give an example of how they can be used to show that Ashley's 8 vertex digraph is strong UDAF equivalent to the 2-leaf rose. 
\end{abstract}
\newpage
\section{Introduction}

In this paper we introduce two new categories UDAF and UDAFG. 
Here UDAF stands for ``unindexed digraphs and foldings" and the G stands for ``groupoid" (as in a category with inverses).
The UDAFG category serves as a generalisation of the category of two-sided subshifts of finite type and topological conjugacy (modulo the restriction to the digraphs in Definition~\ref{udaf digraph defn}) which at the same time gives relatively straightforward descriptions of the outer automorphism groups of \(nV\) and \(G_{n, n-1}\). 
In particular, further investigation of UDAFG could yield progress in both of these areas.
This link continues the existing narrative of James Belk, Collin Bleak, Peter J. Cameron, and Feyishayo Olukoya in \cite{belk2020automorphisms, bleak2020automorphisms}. 
In particular, the authors show that the automorphism group of the one-sided shift on \(n\) letters naturally embeds in \(\Out(G_{n, n-1})\), and similarly the automorphism group of the two-sided shift on \(n\) letters is an extension of a subgroup of \(\Out(G_{n, n-1})\) by \(\Z\).

Instead of considering homeomorphisms between subshifts of finite type \((X, \sigma_X)\) and \((Y, \sigma_Y)\) which preserve the shift operator, UDAFG considers particularly nice bijections between the sets of orbits of these systems. 
This gives us a courser notion of equivalence than the usual strong shift equivalence between subshifts. 
We call this notion of equivalence \textit{strong UDAF equivalence}.
We also define a weaker notion of equivalence called \textit{weak UDAF equivalence} analogous to shift equivalence and show the following (non-)implications hold:

\vspace*{30pt}\begin{rotate}{00}\hspace*{-27pt}\(\text{Weak UDAF Equivalence}\)

\begin{rotate}{-45}\(\quad\mathlarger{\mathlarger{\mathlarger{\Leftarrow}}}\quad
\)\begin{rotate}{45}\(\text{Strong UDAF Equivalence}
\)\begin{rotate}{45}\(\quad\mathlarger{\mathlarger{\mathlarger{\Leftarrow}}}\quad
\)\begin{rotate}{-45}\(\text{Strong Shift Equivalence}.
\)\end{rotate}\end{rotate}\end{rotate}\end{rotate}

 \begin{rotate}{45}\(\quad\mathlarger{\mathlarger{\mathlarger{\Leftarrow}}}\quad
\)\begin{rotate}{-45}\hspace*{12pt} \(\text{Shift Equivalence}
\)\hspace*{12pt} \begin{rotate}{-45}\(\quad\mathlarger{\mathlarger{\mathlarger{\Leftarrow}}}\quad
\)\end{rotate}\end{rotate}\end{rotate}
\hspace*{65pt}\(\mathlarger{\mathlarger{\mathlarger{\not\Uparrow}}}\)
\end{rotate}
\vspace*{30pt}

It remains open whether or not shift equivalence implies strong UDAF equivalence. 
If not, then strong UDAF equivalence may serve as a useful tool for separating shift and strong shift equivalence.

In the following two theorems, we see that the outer automorphism groups of Thompson's group \(V\) and related groups can be seen as the UDAFG automorphism groups of simple digraphs.

The Higman-Thompson groups \(G_{n, r}\) (\(n\geq 2\) and \(r\geq 1\)) are a family of infinite finitely presented groups. Each of these groups is either simple or has a simple subgroup of index 2, and moreover these were the first infinite family of such groups. 
For more background on \(G_{n, r}\) see \cite{pardo2010isomorphism, autgnr, thumann2016operad, higman1974finitely}.

\begin{theorem}
If \(n \geq 2\), then we denote by \(R_n\) the digraph with \(1\) vertex and \(n\) edges.
For each such \(n\), the automorphism group of the UDAFG object corresponding to \(R_n\) is isomorphic to the outer automorphism group of \(G_{n, n-1}\).
\end{theorem}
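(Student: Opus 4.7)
The plan is to identify UDAFG automorphisms of the object attached to \(R_n\) with transducer-theoretic representatives of outer automorphisms of \(G_{n, n-1}\), leveraging the link to the work of Belk, Bleak, Cameron and Olukoya highlighted in the introduction. Since \(R_n\) has a single vertex, the underlying orbit data of its UDAFG object is the set of \(\sigma\)-orbits of the full two-sided shift \(\{0, \dots, n-1\}^{\mathbb{Z}}\). A UDAFG automorphism is presented (in the groupoid) as a zigzag \(R_n \xleftarrow{f} D \xrightarrow{g} R_n\) of foldings between finite digraphs that induces a bijection on orbit spaces. First I would unpack this concretely: each folding corresponds to refining bi-infinite walks in \(R_n\) into walks on \(D\), while the composition \(g \circ f^{-1}\) acts as a rewriting rule on orbits.

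Next I would construct a homomorphism from the group of UDAFG automorphisms of \(R_n\) into \(\Out(G_{n, n-1})\). Identifying \(G_{n, n-1}\) with the Higman--Thompson group acting on the one-sided Cantor space \(\{0, \dots, n-1\}^{\mathbb{N}}\) by prefix replacement, each folding zigzag at \(R_n\) should furnish a (mildly asynchronous) transducer whose induced self-homeomorphism of Cantor space normalises the \(G_{n, n-1}\)-action up to conjugation. The class of this homeomorphism in \(\Out(G_{n, n-1})\) is then defined via the embedding of such transducers into \(\Aut(G_{n, n-1})\) from \cite{bleak2020automorphisms}, and I would check that composition of folding zigzags matches composition of transducers.

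Conversely, given an outer automorphism, I would take the transducer representative produced in \cite{bleak2020automorphisms} and realise it as a folding zigzag: the states and transitions of the transducer package exactly the vertex- and edge-data of an intermediate digraph \(D\) together with two foldings onto \(R_n\). Well-definedness in both directions---independence of the transducer representative of its outer class on one side, and the UDAFG equivalence of folding zigzags on the other---should be verified by showing that the two equivalence relations coincide under the dictionary.

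The main obstacle will be injectivity: ruling out extra UDAFG automorphisms beyond those coming from \(\Out(G_{n, n-1})\). Concretely, one must show that any folding zigzag inducing an inner automorphism of \(G_{n, n-1}\) is trivial as a UDAFG morphism, which will require a normal-form or minimisation argument reducing each zigzag to a canonical synchronizing form compatible with the Higman--Thompson framework. Surjectivity should be comparatively routine once the transducer/folding dictionary is in place, and the group-homomorphism check reduces to the already-routine calculation that foldings compose as expected on orbit bijections.
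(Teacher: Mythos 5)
Your overall strategy is the one the paper follows: identify \(\Out(G_{n,n-1})\) with the transducer group \(\mathcal{O}_n\) of Bleak--Cameron--Olukoya, convert each such transducer into a zigzag \(R_n \leftarrow D \rightarrow R_n\) of UDAF foldings (states become vertices, transitions become edges, the output function becomes the second leg), and conversely reduce an arbitrary UDAFG automorphism to a zigzag whose left leg is the standard folding from a past-future digraph, from which a synchronizing transducer can be read off and minimised. Your ``normal-form/minimisation'' step is exactly the paper's Lemma~\ref{big lem} combined with Theorem~\ref{autgnr theorem}, and you are right that it is one of the two substantive points (note also that an arbitrary UDAFG automorphism is a priori a composite of many zigzags, so you first need that transducerable maps are closed under composition, which itself rests on the past-future reduction; and the clopen-image condition on transducer states, condition (9) of \(\mathcal{O}_n\), also has to be extracted from that lemma).

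The genuine gap is in the step you wave through as ``should be verified'': that the two equivalence relations coincide under the dictionary. The hard half of this is faithfulness of the \(\mathcal{O}_n\)-action on \emph{unindexed} orbits: if a transducer \(T \in \mathcal{O}_n\) induces the identity on \(\rt(R_n)\), you must show \(T\) is the identity transducer. This is not formal --- a map can fix every shift-orbit setwise while acting as an orbit-dependent power of the shift on each one, which is precisely the degree of freedom that unindexing introduces, so fixing all routes is a strictly weaker hypothesis than fixing all bi-infinite sequences. Without this, your homomorphism from \(\Aut_{UDAFG}(\rt(R_n), R_n)\) to \(\Out(G_{n,n-1})\) is not well defined: two zigzags inducing the same bijection on routes could map to distinct outer classes. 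The paper needs a dedicated technical argument for this (Proposition~\ref{injectivity} in the appendix), which feeds the transducer carefully chosen periodic and aperiodic words to force the local shift to be zero. By contrast, the step you single out as the main obstacle --- that a zigzag whose associated transducer is inner is trivial as a UDAFG morphism --- is immediate once the dictionary exists, since the inner class corresponds to the identity element of \(\mathcal{O}_n\), which visibly acts trivially on routes.
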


The Brin-Thompson groups \(nV\) (\(n\geq 1\)) serve as \(n\)-dimensional analogous to Thompson's group \(V\), in particular they are all distinct infinite simple and finitely presented (see \cite{Brin2004, Brin2005, Bleak2010, quick2019, autnv} of more details).
\begin{theorem}
If \(n\in \Z\) and \(n \geq 1\), then we denote by \(nR_2\) the digraph consisting of \(n\) disjoint copies of \(R_2\).
For each such \(n\), the automorphism group of the UDAFG object corresponding to \(nR_2\) is isomorphic to the outer automorphism group of \(nV\).
\end{theorem}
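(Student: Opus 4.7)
The plan is to extend the strategy used for the preceding theorem (the $R_n$ case) to the disjoint union $nR_2$, using the known structure of $\Out(nV)$ from \cite{autnv}.

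First, I would analyze how an automorphism of the UDAFG object for $nR_2$ interacts with the decomposition of $nR_2$ into its $n$ connected components. Because UDAFG morphisms are built out of foldings (as defined earlier in the paper), I expect that any such automorphism decomposes into a permutation in $\Sym(n)$ of the $n$ components, together with component-preserving UDAFG-data, and possibly additional ``cross-component'' folding structure that realises symmetries not visible from the separate $R_2$ pieces. Careful examination of the UDAFG definitions should reveal whether the decomposition is a wreath product or a more exotic extension.

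Second, I would invoke the known structural description of $\Out(nV)$ cited as \cite{autnv}, which presents it explicitly in terms of coordinate permutations acting on the $n$ factors of the underlying higher-dimensional Cantor space, together with coordinatewise symmetries tied to $\Out(V)$. The preceding theorem, applied in the case $n=2$, identifies the UDAFG automorphism group of a single $R_2$ with $\Out(G_{2,1}) = \Out(V)$, which gives the base case for building coordinatewise automorphisms.

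Third, I would construct an explicit isomorphism $\Phi : \Out(nV) \to \Aut(\text{UDAFG}(nR_2))$ by matching the two decompositions: the coordinate-permutation part of $\Out(nV)$ maps to the component-permuting UDAFG automorphisms of $nR_2$, while the coordinatewise part assembles $n$ independent UDAFG automorphisms of $R_2$ via the preceding theorem. Verifying that $\Phi$ is a well-defined group homomorphism should be a routine but tedious check using the UDAFG category's composition rules.

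The main obstacle is surjectivity: showing that no ``exotic'' UDAFG automorphisms appear beyond those coming from $\Out(nV)$. This requires a rigidity argument proving that every folding between distinct copies of $R_2$ inside $nR_2$ is built up out of a component permutation and coordinatewise moves, and that there are no mixed foldings producing genuinely new symmetries. I expect that, just as in the $R_n$ case, this rigidity will follow from a normal-form theorem for UDAFG automorphisms together with the $n=2$ instance of the preceding theorem applied componentwise.
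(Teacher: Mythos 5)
Your proposal matches the paper's proof: the paper establishes exactly the wreath-product decomposition $\Aut_{UDAFG}(\rt(nD),nD)\cong \Aut_{UDAFG}(\rt(D),D)\wr S_n$ (Lemma~\ref{wreath lemma}), with the rigidity/surjectivity step handled by the past-future-digraph normal form of Lemma~\ref{big lem} --- precisely the ``normal-form theorem'' you anticipate --- and then concludes via Theorem~\ref{autgnr theorem} in the case $n=2$ together with $\Out(nV)\cong\Out(V)\wr S_n$ from \cite{autnv}. The only point you leave open, whether the decomposition could be a more exotic extension than a wreath product, is settled in the paper by observing that a past-future digraph of $nD$ splits as a disjoint union of $n$ strongly connected past-future digraphs of $D$, forcing any folding to act componentwise up to a permutation.
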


In the ``beams and multisets" part of the document, we develop a theory of unindexed rays and beams analogous to the existing theory of rays and beams for subshifts of finite type (see section 7.5 of \cite{lind2021introduction}). 
We use this to associate to a finite digraph a commutative semigroup which we call the \textit{UDAF dimension semigroup} analogous the the dimension modules for shift equivalence (by Theorem~\ref{UDAF dimension group}, this semigroup is often a group).
We show that a presentation for the UDAF dimension semigroup of a finite digraph can be easily deduced from the digraph directly and that the isomorphism type can be inferred from the digraph's dimension module.
In particular we show that weak UDAF equivalence is implied by shift equivalence.

The UDAF category is a generalisation of the category of digraphs and foldings introduced by Jim Belk, Collin Bleak, and Peter Cameron.
It was originally made as a generalisation of their category which allows for Bleak's notion of ``conglomeration". 
As of the time this document was written, their category does not currently appear in the literature, so we will give the relevant definitions here.
Foldings are of particular note due to their connection with the problem of checking strong shift equivalence of subshifts of finite type.
In particular it is true that two finite digraphs are strong shift equivalent if and only if there is a third finite digraph which folds onto each them. 
We will show that the analogous statement is true for UDAF foldings and strong UDAF equivalence.

We also show that two matrices represent strong UDAF equivalent digraphs if and only if we can convert one into the other via a sequence of operations representable by matrix manipulations.

\begin{theorem}[Strong UDAF equivalence of matrices]\label{matequivth1}
Suppose that \(D_1\) and \(D_2\) are UDAF digraphs. 
Let \(A\) and \(B\) be matrices obtained from adjacency matrices of \(D_1\) and \(D_2\) respectively by subtracting \(1\) from the diagonal entries.

The digraphs \(D_1\) and \(D_2\) are strong UDAF equivalent if and only if there is a sequence of matrices
\[A=M_0, M_1, \ldots, M_{n}= B,\]
with entries from \(\N\) (or possibly \(-1\) for diagonal entries) such that each \(M_i\) can be obtained from \(M_{i-1}\) by doing one of the following:
\begin{enumerate}
    \item Adding a cross centered on the diagonal with \(-1\) on the diagonal and zeros elsewhere.
    \item Adding row \(j\) to row \(i\) when \(i\neq j\) and the \((i, j)\) entry is non-zero.
    \item Adding column \(i\) to column \(j\) when \(i\neq j\) and the \((i, j)\) entry is non-zero. 
    \item The inverse of any of the above operations.
    \item Reordering/relabeling the vertices of the digraph. In other words, suppose that \(M_i:V_i^2\to \Z\) and \(M_{i+1}:V_{i+1}^2\to \Z\) are matrices representing digraphs with vertex sets \(V_i\) and \(V_{i+1}\) respectively.
    Find a bijection \(b:V_i\to V_{i+1}\) such that for all \(v,w\in V_i\) we have \((v, w)M_i= ((v)b, (w)b)M_{i+1}\). One could also think of this as conjugating by a permutation matrix.
\end{enumerate}
\end{theorem}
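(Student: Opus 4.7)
The plan is to use the common-refinement characterization of strong UDAF equivalence discussed earlier in the paper (two UDAF digraphs are strong UDAF equivalent if and only if there is a third UDAF digraph folding onto each of them), and to build a matrix-level dictionary for the generating set of atomic UDAF foldings. The aim is to show that each atomic folding corresponds to one of the listed matrix operations, and conversely that each listed operation is induced by some atomic folding or its inverse. Sequences of atomic foldings and their inverses will then correspond precisely to sequences of operations (1)--(5).

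For the \((\Leftarrow)\) direction, I would handle each operation in turn at the digraph level. Operation (5) is a pure relabeling and gives an identity UDAF morphism. Operation (1), inserting or deleting a cross with \(-1\) on the diagonal and zeros elsewhere, corresponds after re-adding the diagonal identity correction to inserting or removing a vertex with no incident edges; such a vertex carries no UDAF-relevant information, so this move is a trivial UDAF folding. Operations (2) and (3), the elementary row and column additions performed along a non-zero off-diagonal entry, correspond to the atomic ``edge expansion'' foldings of UDAF, whereby an edge \(i\to j\) is absorbed into the outgoing structure at \(j\) (for a row operation) or the incoming structure at \(i\) (for a column operation); the non-zero entry condition is precisely the condition that the edge witnessing the fold actually exists in the intermediate digraph. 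Operation (4) is then automatic from the groupoid structure of UDAFG.

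For the \((\Rightarrow)\) direction, which I expect to be the main obstacle, I would show that every UDAF folding between two digraphs factors as a finite composition of the atomic moves above. I would do this by decomposing a folding edge-by-edge: a UDAF folding is generated by atomic in- and out-foldings, together with the deletion of vertices that become trivial during the decomposition, and each atomic move has a matrix description matching one of operations (1)--(4). The delicate part is to arrange the decomposition so that every intermediate matrix still has entries in \(\N\) (or \(-1\) on the diagonal): one must schedule the row and column operations so that whenever one is applied, the edge witnessing the non-zero entry side condition is genuinely present in the current digraph, and one must introduce auxiliary vertices via operation (1) as ``workspace'' whenever this would otherwise fail. Verifying that such a schedule always exists, and that the process terminates after finitely many steps, is where most of the technical work is concentrated; once this is done the required sequence \(A=M_0,\ldots,M_n=B\) is produced by concatenating the matrix moves corresponding to the atomic foldings in order.
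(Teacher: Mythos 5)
Your translation dictionary for the easy direction (a given sequence of operations implies strong UDAF equivalence) is essentially the paper's: operation (1) is insertion/deletion of an isolated vertex (an elementary folding of the first type, Lemma~\ref{elmat1}), operations (2) and (3) are pushes and pulls along an existing edge (Lemmas~\ref{elmat2} and~\ref{elmat3}), the non-zero entry condition witnesses the existence of that edge, and (4), (5) come from the groupoid structure and relabelling. That part is fine.

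The gap is in the other direction. You correctly identify that you must show every UDAFG isomorphism factors through elementary foldings and their inverses, but your proposed route --- an ``edge-by-edge'' decomposition of a folding with a scheduling argument and auxiliary workspace vertices --- does not contain the ideas that actually make this work, and the difficulties you flag (keeping intermediate entries in \(\N\), termination) are not the real obstacles. The statement you need is exactly Corollary~\ref{big groupoid} (UDAFG \(=\) UDAFG\('\)), which the paper obtains from Lemma~\ref{big lem} and Theorem~\ref{elsplit}: one first uses elementary moves to convert an arbitrary UDAF folding \(f:D_1\to \Wlk(D_2)\) into an honest digraph homomorphism \(D_1\to D_2\) (subdividing edges whose images are long walks, pushing away edges with length-\(0\) image); one then proves, via the irrationality machinery and the clopen-ness of the forward images \(\fim_f(v)\), that the induced map on bi-infinite walks is a bijection, so that \(f\) is a genuine folding; only then can one invoke the Decomposition Theorem~\ref{splitings are nice theorem} to write the resulting conjugacy as a composition of splittings, each of which Theorem~\ref{elsplit} expresses in elementary foldings. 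The bijectivity-on-walks step is the crux --- a UDAF folding is only assumed to be a bijection on \emph{routes} (shift orbits), and upgrading this to a bijection on walks is where the irrational-walk argument is indispensable --- and nothing in your sketch addresses it. Since Corollary~\ref{big groupoid} is already established earlier in the paper, the honest fix is simply to cite it, after which your argument collapses to the paper's one-line proof; as written, your proposal re-asserts that corollary without proving it.
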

The above theorem can be used to show that the golden mean shift is strong UDAF equivalent to the 2-leaf rose using only 3 steps (\((2)\), \((3)\), and \((1)\) inverse). 
We conclude by giving a more complex example of how one can use Theorem~\ref{matequivth1} to show that two digraphs are strong UDAF equivalent.
In particular we consider the 2-leaf rose and Ashley's \(8\times 8\) example (see Example 2.27 of \cite{kitchens1997symbolic}). 
Ashley's digraph is known to be shift equivalent to the two leaf rose \(R_2\) but it is not known whether or not they are strong shift equivalent.
\begin{center}
    \textbf{Acknowledgments}
    
    I would like to thank Collin Bleak for all his helpful feedback during the writing of this document.
\end{center}
\section{Definitions}
In this section we introduce terminology that we will be using throughout the paper.
In particular we explain how we will consider digraphs, and how we consider unindexed bi-infinite walks.
\begin{defn}
A \textit{digraph} \(D\) is a 4-tuple \((V_D, E_D, s_D, t_D)\) where \(s_D,t_D:E_D \to V_D\) are functions.
We call \(V_D\) and \(E_D\) the \textit{vertex} and \textit{edge} sets of \(D\) respectively, and we say that \(e\in E_D\) is an edge \textit{from} \((e)s_D\) \textit{to} \((e)t_D\).
\end{defn}
\begin{defn}
A \textit{digraph homomorphism} \(\phi:D_1 \to D_2\) between digraphs is a pair of functions \((\phi_V, \phi_E)\), where \(\phi_V:V_{D_1}\to V_{D_2}\) and \(\phi_E:E_{D_1} \to E_{D_2}\) satisfy
\(\phi_Es_{D_2} = s_{D_1}\phi_V\) and \(\phi_Et_{D_2}=t_{D_1}\phi_V\). 
That is to say that the following diagrams commute.

\begin{center}
\begin{tikzpicture}[->,>=stealth',shorten >=1pt,auto,node distance=5cm,on grid, every state/.style={fill=red,draw=none,circular drop shadow,text=white}]

    \node [state, scale = 0.5] (A) at (-3,1) {\(E_{D_1}\)};
  \node [state, scale = 0.5] (B) at (-1,1) {\(V_{D_1}\)};
  \node [state, scale = 0.5] (C) at (-3, -1) {\(E_{D_2}\)};
  \node [state, scale = 0.5] (D) at (-1, -1) {\(V_{D_2}\)};
 
  \node [state, scale = 0.5] (A2) at (1,1) {\(E_{D_1}\)} ;
  \node [state, scale = 0.5] (B2) at (3,1) {\(V_{D_1}\)};
  \node [state, scale = 0.5] (C2) at (1, -1) {\(E_{D_2}\)}; 
  \node [state, scale = 0.5] (D2) at (3, -1) {\(V_{D_2}\)};

 \path [->]
 (A) edge  [scale = 2]
 node  [swap]{\(\phi_{E}\)} (C)
 (A) edge  [out=0,in=180, scale = 2]
 node {\(s_{D_1}\)} (B)
 (B) edge  [ scale = 2]
 node {\(\phi_{V}\)} (D)
 (C) edge  [scale = 2]
 node [swap]{\(s_{D_2}\)} (D)

 (A2) edge  [scale = 2]
 node  [swap]{\(\phi_{E}\)} (C2)
 (A2) edge  [out=0,in=180, scale = 2]
 node {\(t_{D_1}\)} (B2)
 (B2) edge  [ scale = 2]
 node {\(\phi_{V}\)} (D2)
 (C2) edge  [scale = 2]
 node [swap]{\(t_{D_2}\)} (D2);
 \end{tikzpicture}
 \end{center}

Digraph homomorphisms are composed coordinate-wise. If \(D_1\) and \(D_2\) are digraphs, then we define \(\Hom(D_1, D_2)\) to be the set of digraph homomorphisms from \(D_1\) to \(D_2\) and \(\Aut(D_1)\) to be the automorphism group of \(D_1\).
\end{defn}

\begin{defn}
For \(n, m\in \Z\cup\{-\infty, +\infty\} \) with \(n\leq m\), we define \(\Z_{[n, m]}\) to be the digraph with vertex set \(\{i\in \Z: n\leq i\leq m\}\) and edge set \(\{(i, i+1): n\leq i< i+1\leq m\}\) where each edge \((i, i+1)\) goes from \(i\) to \(i+1\).
\end{defn}
For \(n, m\in \Z\), we think of \(\Z_{[n, m]}\) as the path between \(n\) and \(m\) in the integers. 
Indeed \(\Z_{[n, m]}\) is the restriction of the digraph \(\Z_{[-\infty, \infty]}\) to the interval \([n, m]\).
Note that in the above definition we allow the digraphs \(\Z_{[n,n]}\) (for \(n\in \Z\)), these digraphs have one vertex and zero edges.
\begin{defn}\label{concatenation definition}
Let \(D\) be a digraph. 
If \(a, d\in \Z\cup \{-\infty, \infty\}\), \(b,c \in \Z\), \(\alpha\in \Hom(\Z_{[a,b]}, D)\), \(\beta\in \Hom(\Z_{[c, d]}, D)\) and \((b)\alpha_V= (c)\beta_V\), then we define their \textit{concatenation} \(\alpha\star \beta: \Z_{[a, b+d-c]} \to D\) to be the digraph homomorphism which restricts to \(\alpha\) in \(\Z_{[a, b]}\) and restricts to a translation of \(\beta\) on \(\Z_{[b, b+d-c]}\).
That is for \(v\in V_{\Z_{[a, b+d-c]}}\) and \((w,w+1)\in E_{\Z_{[a, b+d-c]}}\) we have
\[(v)(\alpha\star \beta)_V = \begin{cases*}((v)\alpha_V & if \(v\leq b\)\\
(v+(c-b))\beta_V & if \(v\geq b\)
\end{cases*},\]
\[(w, w+1)(\alpha\star \beta)_E = \begin{cases*}(w, w+1)\alpha_E & if \(w< b\)\\
(w+(c-b), w+1+(c-b))\beta_E & if \(w\geq b\)
\end{cases*}.\]
\end{defn}
One of the ways our categories allow us to expand our notions of equivalence, is that they make it easy to forget the distinction between an edge and a finite walk.
Indeed we will be thinking of finite walks as edges for most of this document.

\begin{defn}\label{finite paths}
If \(D\) is a digraph, then we define \(\Wlk(D)\) to be the small category with
\begin{enumerate}
    \item  \(V_D\) as its set of objects.
    \item \(\bigcup_{ 0\leq n< \infty} \Hom(\Z_{[0,n]}, D)\) as its set of morphisms (where \(\alpha\in \Z_{[0, n]}\) is a morphism from \((0)\alpha_V\) to \((n)\alpha_V\)).
    \item Concatenation (\ref{concatenation definition}) as composition of morphisms.
\end{enumerate}
We will refer to the morphisms of this category as \textit{finite walks}.
If \(f\in \Hom(\Z_{[0,n]}, D)\), then we define the \textit{length} of \(f\) (denoted \(\norm{f}\)) to be  \(n\).
The set of identity morphisms of this category is then precisely the set \(\Hom(\Z_{[0,0]}, D)\) (the walks of length \(0\)).

 We will identify \(D\) with the natural embedded copy of \(D\) within \(\Wlk(D)\) with vertex set \(V_D\) and the walks of length \(1\) as edges.
 \end{defn}
 Immediately below and later, we will identify a small category with the underlying digraph obtained by using the objects as vertices and morphisms as edges (particularly in the case of the walks category in the previous definition).

\begin{remark}
If \(D\) is a digraph, then the category \(\Wlk(D)\) is free in the sense that if \(C\) is a small category and \(\phi:D\to C\) is a digraph homomorphism, then there is a unique functor \(\widetilde{\phi}:\Wlk(D) \to C\) extending \(\phi\).
These extensions will be used when we come to define UDAF foldings.
 \end{remark}

\begin{defn}
Similarly to the finite walks in Definition~\ref{finite paths}, if \(w:\Z_{[-\infty, \infty]}\to D\) is a digraph homomorphism, then we say that \(w\) a \textit{bi-infinite walk} (in D).
\end{defn}

In order to remove indexing we require a more general notion of a bi-infinite walk.
\begin{defn}
If \(w:\Z_{[-\infty, \infty]}\to \Wlk(D)\) is a digraph homomorphism then we say that \(w\) a \textit{generalised bi-infinite walk} (in D). 
Moreover we say that \(w\) is \textit{degenerate} if either
\begin{enumerate}
    \item There is \(z\in \Z\) such that for all \(n>z\) we have \(\norm{(n, n+1)w_E}= 0\).
    \item There is \(z\in \Z\) such that for all \(n<z\) we have \(\norm{(n, n+1)w_E}= 0\).
\end{enumerate}
We denote the set of non-degenerate bi-infinite walks in \(D\) by \(\GBwlk(D)\).
\end{defn}
Observe that the target object of a generalised bi-infinite walk, in a digraph \(D\), is \(\Wlk(D)\) and not \(D\) itself. We do this as being able to treat edges like walks in crucial throughout this document, and we often think of a bi-infinite walk as if it has been ``flattened" to form a bi-infinite walk in the more natural sense as explained in the following definition.
\begin{defn}
Let \(w\) be a non-degenerate generalised bi-infinite walk in a digraph \(D\).
We say that \(w_0:
\Z_{[-\infty, \infty]} \to D\) is a \textit{flattening} of \(w\) if there is a map \(f_{w, w_0}:\Z\to \Z\) such that for all \(z\in \Z\) and \(n\in \{0, 1, \ldots, \norm{(z, z+1)w_E}-1\}\) (where \(\{0, 1, \ldots, -1\}=\varnothing\)) we have:
\[(z)f_{w, w_0}+\norm{(z, z+1)w_E}= (z+1)f_{w, w_0},\]
\[((z)f_{w,w_0} + n)w_0=(n, n+ 1)((z, z+1)w_E)_E.\]

\begin{center}
    
 \begin{tikzpicture}[->,>=stealth',shorten >=1pt,auto,node distance=5cm,on grid,semithick, every state/.style={fill=red,draw=none,circular drop shadow,text=white}]
  \node [scale = 2] (W) at (-6,0) {\(w\)};
  \node [scale = 2] (Wrddd) at (-5,0) {\(\cdots\)};
  \node [state, scale = 0.7] (-1) at (-4, 0) {\(-1\)};
  \node [scale = 0.7] (-1d) at (-4, -0.5) {\((-1)w_V\)};
  \node [state, scale = 0.5] (-1u) at (-3.5, 1) {};
  
 \node [scale = 1.5] (-10) at (-2.5, 1) {\(\cdots\)};
 \node [scale = 1] () at (-2.5, 0.5) {\((-1, 0)w_E\)};
  \node [state, scale = 0.5] (0ul) at (-1.5, 1) {};
  \node [state, scale = 0.7] (0) at (-1, 0) {\(0\)};
   \node [scale = 0.7] (0d) at (-1, -0.5) {\((0)w_V\)};
  \node [state, scale = 0.5] (0ur) at (-0.5, 1) {};
 
 \node [scale = 1.5] (01) at (0.5, 1) {\(\cdots\)};
 \node [scale = 1] () at (0.5, 0.5) {\((0, 1)w_E\)};
 
  \node [state, scale = 0.5] (1ul) at (1.5, 1) {};
  \node [state, scale = 0.7] (1) at (2, 0) {\(1\)};
 \node [scale = 0.7] (1d) at (2, -0.5) {\((1)w_V\)};
  \node [state, scale = 0.5] (1ur) at (2.5, 1) {};
 \node [scale = 1.5] (12) at (3.5, 1) {\(\cdots\)};
 \node [scale = 1] () at (3.5, 0.5) {\((1, 2)w_E\)};
 
   \node [state, scale = 0.5] (2ul) at (4.5, 1) {};
  \node [state, scale = 0.7] (2) at (5, 0) {\(2\)};
 \node [scale = 0.7] (2d) at (5, -0.5) {\((2)w_V\)};
  \node [scale = 2] (2r) at (6, 0) {\(\cdots\)};
 
 \path [->]
 (-1) edge  [scale = 2]
 node  [swap]{} (-1u)
 (0ul) edge  [scale = 2]
 node  [swap]{} (0)
  (0) edge  [scale = 2]
 node  [swap]{} (0ur)
 (1ul) edge  [scale = 2]
 node  [swap]{} (1)
  (1) edge  [scale = 2]
 node  [swap]{} (1ur)
 (2ul) edge  [scale = 2]
 node  [swap]{} (2);
 \end{tikzpicture}
 
  \begin{tikzpicture}[->,>=stealth',shorten >=1pt,auto,node distance=5cm,on grid,semithick, every state/.style={fill=red,draw=none,circular drop shadow,text=white}]
  \node [scale = 2] (W) at (-6,0) {\(w_0\)};
  \node [scale = 2] (Wrddd) at (-5,0) {\(\cdots\)};
  \node [state, scale = 0.7] (-1) at (-4, 0) {\((0)f\)};
  \node [scale = 0.7] (-1d) at (-4, -0.5) {\((0)w_V\)};
  \node [state, scale = 0.7] (-1u) at (-2.8, 0) {};
 \node [scale = 1.5] (-10) at (-2, 0) {\(\cdots\)};
 \node [scale = 1] () at (-2, 0.5) {\((0, 1)w_E\)};
 \node [state, scale = 0.7] (0ul) at (-1.3, 0) {};
 \node [state, scale = 0.7] (0) at (-0.1, 0) {\((1)f\)};
 \node [scale = 0.7] (0d) at (-0.1, -0.5) {\((1)w_V\)};
 \node [state, scale = 0.7] (0ur) at (1.1, 0) {};
 
 \node [scale = 1.5] (01) at (1.9, 0) {\(\cdots\)};
 \node [scale = 1] () at (1.9, 0.5) {\((1, 2)w_E\)};
 
 \node [state, scale = 0.7] (1ul) at (2.7, 0) {};  
 \node [state, scale = 0.7] (1) at (3.9, 0) {\((2)f\)};
 \node [scale = 0.7] (1d) at (3.9, -0.5) {\((2)w_V\)};  \node [ scale = 2] (1ur) at (4.7, 0) {\(\cdots\)};
 
 \path [->]
 (-1) edge  [scale = 2]
 node  [swap]{} (-1u)
 (0ul) edge  [scale = 2]
 node  [swap]{} (0)
  (0) edge  [scale = 2]
 node  [swap]{} (0ur)
 (1ul) edge  [scale = 2]
 node  [swap]{} (1);
 \end{tikzpicture}
 
\end{center}
As no single flattening of a bi-infinite walk is most natural we will often talk about the set of all of them as opposed to a particular one. 
We then define \(\Flt(w)\) to be the set of flattenings of \(w\).
\end{defn}
\begin{example}
Let \(R_2\) denote the digraph \(V_D= \{\bullet\}\) and \(E_D= \{a, b\}\) (the 2-leaf rose). 
We will identify the finite walks in \(R_2\) with \(\{a, b\}^*\) in the intuitive fashion. 
Suppose that \(w:\Z_{[-\infty, \infty]}\) is the bi-infinite walk with \((z)w_V= \bullet\) for all \(z\), \((z, z+1)w_E= a\) whenever \(|z|> 1\), \((-1, 0)w_E= \varepsilon, (0, 1)w_E=b\), and \((1, 2)w_E=bab\).
We claim that the map \(w_0:\Z_{[-\infty, \infty]}\to D\) defined by:
\begin{align*}
    (z)(w_0)_V&= \bullet \text{ for all }z\in \Z\\
    (z, z+1)(w_0)_E&= a \text{ for all }z<0\text{ and for all }z>3\\
    (0, 1)(w_0)_E&= b, \quad (1, 2)(w_0)_E= b, \quad (2, 3)(w_0)_E= a,\quad (3, 4)(w_0)_E= b.
\end{align*}
is a flattening of \(w\).
Intuitively as we think of \(w\) as
\[\ldots (a)(a)(a)(a)()(b)(bab)(a)(a) \ldots,\]
and we think of \(w_0\) as
\[\ldots aaaabbabaa \ldots.\]
The map \(f_{w, w_0}\) is then given by:
\[(z)f_{p, p_0}= \left\{\begin{array}{cc}
z +1     &  z <-1\\
0 & z=-1\\
0    & z=0\\
1 & z= 1\\
4 & z= 2\\
z+2 & z>2
\end{array}\right\}.\]
\end{example}
Unless stated otherwise, we will always use  the notation \(\langle X\rangle\) to mean the group generated by \(X\) as opposed to semigroup/monoid.
\begin{defn}
We define \(\sigma:\Z_{[-\infty, \infty]} \to \Z_{[-\infty, \infty]}\) to be the unique automorphism with \((0)\sigma_V = 1\).
Note that \(\Aut(\Z_{[-\infty, \infty]}) = \langle \sigma \rangle\).
\end{defn}
\begin{remark}\label{flattening set}
The flattenings of a generalised bi-infinite walk are all translations of each other.
Thus if \(w\) is a non-degenerate bi-infinite walk in a digraph \(D\) and \(w_0\) is any flattening of \(w\), then
\[\Flt(w)=\langle \sigma \rangle\circ w_0.\]
In particular if \((X_D, \sigma_D)\) is the subshift of finite type corresponding to a digraph \(D\), then the flattening sets are precisely the orbits of \(\sigma_D\).
\end{remark}
\begin{defn}
For a digraph \(D\), we define an equivalence relation \(\sim_F\) on the class of all non-degenerate bi-infinite walks in digraphs by
\[p\sim_F q \iff \Flt(p)= \Flt(q).\]
We call an equivalence class \([p]_{\sim_F}\) of \(\sim_F\) a \textit{route}. 
Moreover if \(D\) is a digraph, then define 
\[\rt(D): =\makeset{[w]_{\sim_F}}{\(w\in \GBwlk(D)\)}.\]
\end{defn}
Note that \([w]_{\sim_{F}}\) and \(\Flt(w)\) carry the same information but one consists of generalised bi-infinite walks and one consists of standard bi-infinite walks.

\begin{defn}[Degenerate homomorphisms]
If \(D_1, D_2\) are finite digraphs and \(\phi:D_1 \to \Wlk(D_2)\) is a digraph homomorphism, then we say that \(\phi\) is \textit{degenerate} if there is a non-degenerate generalised bi-infinite walk \(p\) in \(D_1\) such that \(p\circ \widetilde{\phi}\) is degenerate.
\end{defn}
Note that if \(D_1\) and \(D_2\) are finite digraphs, then \(\phi:D_1\to \Wlk(D_2)\) is degenerate if and only if \(\widetilde{\phi}\) maps a cycle of length greater than \(0\) to a cycle of length \(0\).
\begin{defn}
If \(D_1, D_2\) are finite digraphs and \(\phi:D_1 \to \Wlk(D_2)\) is a non-degenerate digraph homomorphism, then we define \(\phi_*:\rt(D_1) \to \rt(D_2)\) by
\[([p]_{\sim_F})\phi_*=[p\circ \widetilde{\phi}]_{\sim_F}.\]
Similarly if \(\phi\in \Hom(D_1, D_2)\), then we define \(\phi_*\) via the standard embedding of \(D_2\) into \(\Wlk(D_2)\).
\end{defn}

\part{The category UDAFG}
In this part we introduce the category UDAFG. In particular we give various equivalent ways of viewing the morphisms of this category (see Theorem~\ref{udaf isomorphisms theorem}).

In the next definition we define the class of digraphs we will use for the objects in our categories. 
The imposed conditions will make it easier to construct ``irrational" walks (Definition~\ref{rational walks}).
Ideally we would be able to consider all digraphs, but our categories do still cover most digraphs of interest. 
In particular they allow for the \(n\)-leaf  rose \(R_n\) for all \(n\neq 1\) (the one vertex n-edge digraphs).

\begin{defn}[UDAF digraphs]\label{udaf digraph defn}
We say that a digraph \(D\) is an \textit{UDAF digraph} if:
\begin{enumerate}
    \item \(D\) is finite (in terms of both edges and vertices).
    \item For all \(v\in V_D\) we have
    \[\left|\makeset{\phi\in \Hom(\Z_{[0,\infty]},D)}{\((0)\phi_V=v\)}\right|\neq 1.\]
    \item For all \(v\in V_D\) we have
    \[\left|\makeset{\phi\in \Hom(\Z_{[-\infty,0]}, D)}{\((0)\phi_V=v\)}\right|\neq 1.\]
\end{enumerate}
\end{defn}
Before we define UDAF foldings we define a folding as given by Jim Belk, Collin Bleak, and Peter Cameron.
\begin{defn}[Foldings]\label{normal foldings defn}
Let \(D_1, D_2\) be finite digraphs.
A digraph homomorphism \(\phi:D_1\to D_2\) is called a \textit{folding} if the map \(f\to f\circ \phi\) from \(\Hom(\Z, D_1)\to \Hom(\Z, D_2)\) is a bijection.

In this case, this induced map is a topological conjugacy between subshifts of finite type.
\end{defn}
\begin{defn}[UDAF foldings and UDAFG]
Let \(D_1, D_2\) be finite digraphs.
We say that \(\phi:D_1\to \Wlk(D_2)\) is an \textit{UDAF folding} if \(\phi\) is a digraph homomorphism and \(\phi_*:\rt(D_1)\to \rt(D_2)\) is a bijection. 
We also extend this definition to elements of \(\Hom(D_1, D_2)\) (via the identification of \(D_2\) with its embedded copy in \(\Wlk(D_2)\)). In particular we include usual foldings as UDAF foldings.

We define UDAF to be the category of UDAF digraphs and UDAF foldings. 
We define UDAFG to be the smallest category such that:
\begin{enumerate}
    \item The objects of UDAFG are the pairs \((\rt(D), D)\) where \(D\) is an UDAF digraph.
    \item When \((\rt(D_1), D_1)\) to \((\rt(D_2), D_2)\) are objects of UDAFG and \(\phi:D_1 \to \Wlk(D_2)\) is an UDAF folding, the bijection \(\phi_*\) is a morphism from \((\rt(D_1), D_1)\) to \((\rt(D_2), D_2)\).
    \item The inverse of a morphism is always a morphism and the composition of two morphisms is a morphism.
\end{enumerate}
We technically have an issue here, as it is possible for a single bijection \(\phi_*\) to have multiple possible domain (or target) digraphs.
We could fix this problem by defining a morphism to instead be a triple \((D_1,\phi_*, D_2)\) but we do not do this explicitly to avoid clunky notation.
However it will be important to distinguish objects \((\rt(D_1), D_1)\) and \((\rt(D_2), D_2)\) for which \(D_1\) and \(D_2\) differ but \(\rt(D_1)\) and \(\rt(D_2)\) coincide. 
Note that all the morphisms of the category UDAFG are isomorphisms (the G stands for groupoid).
\end{defn}

The following definition is of primary importance due to the fact that if \(\phi:G_1\to G_2\) is any folding (Definition~\ref{normal foldings defn}), then we can find a past-future digraph \(P\) and a folding \(f:P\to G_1\) such that \(f\circ \phi\) is the standard folding. 
This is follows as the induced map from \(\Hom(\Z, D_1)\to \Hom(\Z, D_2)\) is a continuous bijection between compact Hausdorff spaces and hence has a continuous inverse.
In particular all foldings can be thought of in terms of these foldings. 
We will later show that a similar fact holds for UDAF foldings (Lemma~\ref{big lem}). 

The following digraphs are notionally equivalent to ``De Bruijin graphs" or iterated ``line graphs".
\begin{defn}[Past-future digraphs]\label{past-future def}
If \(D\) is a digraph, \((m, n) \in {\Z_{\leq 0}} \times \Z_{\geq 0}\) then we define its \((m, n)-\)past-future digraph \(\PF(D, m, n)\), to be the digraph with
\[V_{\PF(D, m, n)} := \Hom(\Z_{[m, n]}, D),\quad E_{\PF(D, m, n)} := \Hom(\Z_{[m, n + 1]}, D),\]
and for all \(e\in  E_{\PF(D, m, n)}\)
\[(e)s_{\PF(D, m, n)}= e\restriction_{\Z_{[m, n]}} \quad \text{ and }(e)t_{\PF(D, m, n)}= (\sigma \circ e)\restriction_{\Z_{[m, n]}}.\]
(The symbol \(\restriction\) here denotes the restriction of a digraph homomorphism to a subdigraph.)
We also define the \textit{standard folding of }\(\PF(D, m, n)\) to be the digraph homomorphism \(f:\PF(D, m, n) \to D\) with \((h)f_V= (0)h_V\) and \((h)f_E= (0, 1)h_E\).
It is routine to check that this digraph homomorphism is always a folding.

We call \(m\) and \(n\) the past and future information of \(\PF(D, m, n)\) respectively. Note that if \(m'\geq m\), \(n'\geq n\) and \(\phi_{m, n}:\PF(D, m, n)\to D\), \(\phi_{m', n'}:\PF(D, m', n')\to D\) are the standard foldings, then there is an innate folding \(f:\PF(D, m', n')\to \PF(D, m, n)\) with \(f\circ \phi_{m, n}=\phi_{m',n'}\) defined by taking the appropriate restrictions of the walks constituting the vertices and edges.
\end{defn}

The following definition gives some easy examples of UDAF foldings, we will later show that such foldings are in fact sufficient to define the entire UDAFG category (Theorem~\ref{udaf isomorphisms theorem}).
\begin{defn}[Elementary UDAF foldings]\label{elementary UDAF foldings defn}
If \(D_1, D_2\) are UDAF digraphs and \(f:D_1\to \Wlk(D_2)\) is an UDAF folding, then we say that \(f\) is an \textit{elementary} UDAF folding if one of the following holds:
\begin{enumerate}
    \item \(f\) is a digraph homomorphism which embeds \(D_1\) into \(D_2\) and such that \(D_2\) consists of the image of \(f\) and one other vertex with no edges connected to it,
    \item \(f:D_1\to \Wlk(D_2)\) is a digraph homomorphism and there is a non-loop edge \(e\in E_{D_2}\) and a subdigraph \(S=(V_{D_1}, E_S)\) of \(D_1\) such that \(f\restriction_S\) is an isomorphism from \(S\) to \((V_{D_2}, E_{D_2}\backslash \{e\})\) (here a ``loop edge" is an edge from a vertex to itself).
    Moreover \(f_E\restriction_{E_{D_1}\backslash E_S}\) is a bijection onto the set
    \[\{p\in \Hom(\Z_{[0,2]}, D_2):(0, 1)p_E= e\}\] (we call this a push from \(e\)) see Figure~\ref{push figure},
    \item \(f:D_1\to \Wlk(D_2)\) is a digraph homomorphism and there is a non-loop edge \(e\in E_{D_2}\) and a subdigraph \(S=(V_{D_1}, E_S)\) of \(D_1\) such that \(f\restriction_S\) is an isomorphism from \(S\) to \((V_{D_2}, E_{D_2}\backslash \{e\})\).
    Moreover \(f_E\restriction_{E_{D_1}\backslash E_S}\) is a bijection onto the set
    \[\{p\in \Hom(\Z_{[0,2]}, D_2):(1, 2)p_E= e\}\] (we call this a full from \(e\)) see Figure~\ref{pull figure},
\end{enumerate}
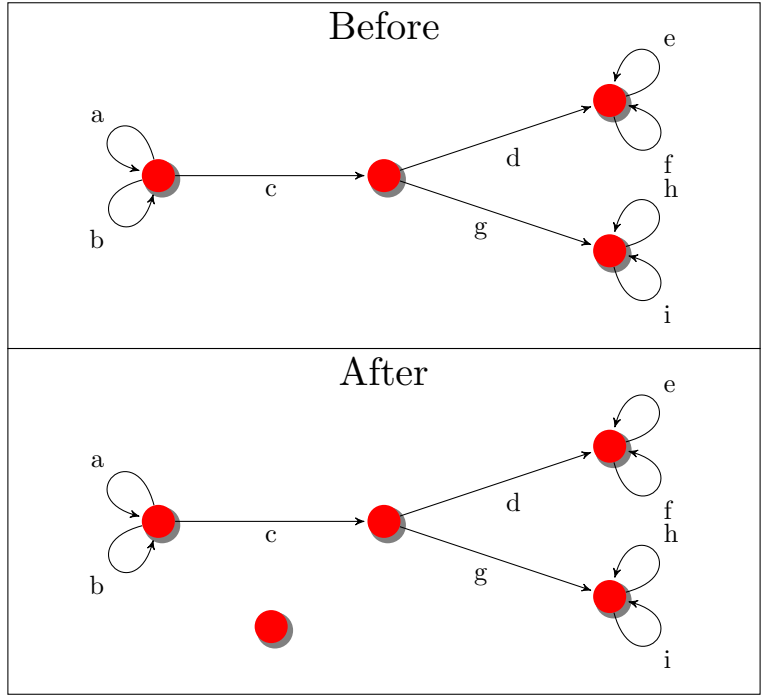
\begin{figure}
\begin{center}
\begin{tikzpicture}[->,>=stealth',shorten >=1pt,auto,node distance=5cm,on grid, every state/.style={fill=red,draw=none,circular drop shadow,text=white}]

  \node [state, scale = 0.5] (A) at (-3,0) {};
  \node [state, scale = 0.5] (B) at (0,0) {};
  \node [state, scale = 0.5] (C) at (3, 1) {};
  \node [state, scale = 0.5] (D) at (3, -1) {};
  \node [scale = 1.5] (T) at (0, 2) {Before};
  \draw (-5, -2.3) rectangle (5, 2.3);
  
    \node [scale = 1.5] (T2) at (0, -2.6) {After};
  \node [state, scale = 0.5] (A2) at (-3,-4.6) {};
  \node [state, scale = 0.5] (B2) at (0,-4.6) {};
  \node [state, scale = 0.5] (C2) at (3, -3.6) {};
  \node [state, scale = 0.5] (D2) at (3, -5.6) {};
  \node [state, scale = 0.5] (X) at (-1.5, -6) {};
  \draw (-5, -6.9) rectangle (5, -2.3);

 \path [->]
 (A) edge  [out=105,in=165, loop, scale = 2]
 node  [swap]{a} (A)
 (A) edge  [out=195,in=255, loop, scale = 2]
 node [swap]{b} (A)
 (A) edge  [out=0,in=180, scale = 2]
 node [swap]{c} (B)
 (B) edge  [ scale = 2]
 node [swap]{d} (C)
 (C) edge  [out=15,in=75, loop, scale = 2]
 node [swap]{e} (C)
 (C) edge  [out=285,in=345, loop, scale = 2]
 node  [swap]{f} (C)
 (B) edge  [ scale = 2]
 node [swap]{g} (D)
 (D) edge  [out=15,in=75, loop, scale = 2]
 node [swap]{h} (D)
 (D) edge  [out=285,in=345, loop, scale = 2]
 node  [swap]{i} (D)
 
 (A2) edge  [out=105,in=165, loop, scale = 2]
 node  [swap]{a} (A2)
 (A2) edge  [out=195,in=255, loop, scale = 2]
 node [swap]{b} (A2)
 (A2) edge  [out=0,in=180, scale = 2]
 node [swap]{c} (B2)
 (B2) edge  [ scale = 2]
 node [swap]{d} (C2)
 (C2) edge  [out=15,in=75, loop, scale = 2]
 node [swap]{e} (C2)
 (C2) edge  [out=285,in=345, loop, scale = 2]
 node  [swap]{f} (C2)
 (B2) edge  [ scale = 2]
 node [swap]{g} (D2)
 (D2) edge  [out=15,in=75, loop, scale = 2]
 node [swap]{h} (D2)
 (D2) edge  [out=285,in=345, loop, scale = 2]
 node  [swap]{i} (D2)
 ;
 \end{tikzpicture}
 \end{center}
\caption{An example of the effect of an elementary folding of the first type}\label{baker}
\end{figure}
\begin{figure}
\begin{center}
\begin{tikzpicture}[->,>=stealth',shorten >=1pt,auto,node distance=5cm,on grid, every state/.style={fill=red,draw=none,circular drop shadow,text=white}]

  \node [state, scale = 0.5] (A) at (-3,0) {};
  \node [state, scale = 0.5] (B) at (0,0) {};
  \node [state, scale = 0.5] (C) at (3, 1) {};
  \node [state, scale = 0.5] (D) at (3, -1) {};
  \node [scale = 1.5] (T) at (0, 2) {Before};
  \draw (-5, -2.3) rectangle (5, 2.3);
  
    \node [scale = 1.5] (T2) at (0, -2.6) {After};
  \node [state, scale = 0.5] (A2) at (-3,-4.6) {};
  \node [state, scale = 0.5] (B2) at (0,-4.6) {};
  \node [state, scale = 0.5] (C2) at (3, -3.6) {};
  \node [state, scale = 0.5] (D2) at (3, -5.6) {};
  \draw (-5, -6.9) rectangle (5, -2.3);

 \path [->]
 (A) edge  [out=105,in=165, loop, scale = 2]
 node  [swap]{a} (A)
 (A) edge  [out=195,in=255, loop, scale = 2]
 node [swap]{b} (A)
 (A) edge  [out=0,in=180, scale = 2]
 node [swap]{c} (B)
 (B) edge  [ scale = 2]
 node [swap]{d} (C)
 (C) edge  [out=15,in=75, loop, scale = 2]
 node [swap]{e} (C)
 (C) edge  [out=285,in=345, loop, scale = 2]
 node  [swap]{f} (C)
 (B) edge  [ scale = 2]
 node [swap]{g} (D)
 (D) edge  [out=15,in=75, loop, scale = 2]
 node [swap]{h} (D)
 (D) edge  [out=285,in=345, loop, scale = 2]
 node  [swap]{i} (D)
 
 (A2) edge  [out=105,in=165, loop, scale = 2]
 node  [swap]{a} (A2)
 (A2) edge  [out=195,in=255, loop, scale = 2]
 node [swap]{b} (A2)
 (A2) edge  [ scale = 2]
 node []{cd} (C2)
 (A2) edge  [scale = 2]
 node [swap]{cg} (D2)
 (B2) edge  [ scale = 2]
 node [swap]{d} (C2)
 (C2) edge  [out=15,in=75, loop, scale = 2]
 node [swap]{e} (C2)
 (C2) edge  [out=285,in=345, loop, scale = 2]
 node  [swap]{f} (C2)
 (B2) edge  [ scale = 2]
 node []{g} (D2)
 (D2) edge  [out=15,in=75, loop, scale = 2]
 node [swap]{h} (D2)
 (D2) edge  [out=285,in=345, loop, scale = 2]
 node  [swap]{i} (D2)
 ;
 \end{tikzpicture}
 \end{center}
\caption{An example of the effect of an elementary folding of the second type (pushing from c).}\label{push figure}
\end{figure}
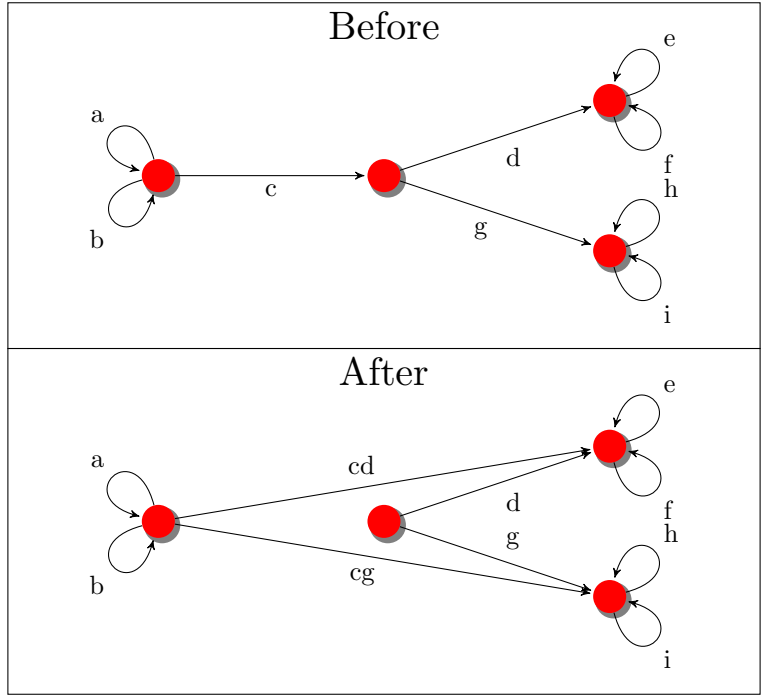
\begin{figure}
\begin{center}

\begin{tikzpicture}[->,>=stealth',shorten >=1pt,auto,node distance=5cm,on grid, every state/.style={fill=red,draw=none,circular drop shadow,text=white}]

  \node [state, scale = 0.5] (A) at (-3,0) {};
  \node [state, scale = 0.5] (B) at (0,0) {};
  \node [state, scale = 0.5] (C) at (3, 1) {};
  \node [state, scale = 0.5] (D) at (3, -1) {};
  \node [scale = 1.5] (T) at (0, 2) {Before};
  \draw (-5, -2.3) rectangle (5, 2.3);
  
    \node [scale = 1.5] (T2) at (0, -2.6) {After};
  \node [state, scale = 0.5] (A2) at (-3,-4.6) {};
  \node [state, scale = 0.5] (B2) at (0,-4.6) {};
  \node [state, scale = 0.5] (C2) at (3, -3.6) {};
  \node [state, scale = 0.5] (D2) at (3, -5.6) {};
  \draw (-5, -6.9) rectangle (5, -2.3);

 \path [->]
 (A) edge  [out=105,in=165, loop, scale = 2]
 node  [swap]{a} (A)
 (A) edge  [out=195,in=255, loop, scale = 2]
 node [swap]{b} (A)
 (A) edge  [out=0,in=180, scale = 2]
 node [swap]{c} (B)
 (B) edge  [ scale = 2]
 node [swap]{d} (C)
 (C) edge  [out=15,in=75, loop, scale = 2]
 node [swap]{e} (C)
 (C) edge  [out=285,in=345, loop, scale = 2]
 node  [swap]{f} (C)
 (B) edge  [ scale = 2]
 node [swap]{g} (D)
 (D) edge  [out=15,in=75, loop, scale = 2]
 node [swap]{h} (D)
 (D) edge  [out=285,in=345, loop, scale = 2]
 node  [swap]{i} (D)
 
 (A2) edge  [out=105,in=165, loop, scale = 2]
 node  [swap]{a} (A2)
 (A2) edge  [out=195,in=255, loop, scale = 2]
 node [swap]{b} (A2)
 (A2) edge  [out=15,in=165, scale = 2]
 node []{ac} (B2)
 (A2) edge  [out=345,in=195, scale = 2]
 node [swap]{bc} (B2)
 (B2) edge  [ scale = 2]
 node [swap]{d} (C2)
 (C2) edge  [out=15,in=75, loop, scale = 2]
 node [swap]{e} (C2)
 (C2) edge  [out=285,in=345, loop, scale = 2]
 node  [swap]{f} (C2)
 (B2) edge  [ scale = 2]
 node [swap]{g} (D2)
 (D2) edge  [out=15,in=75, loop, scale = 2]
 node [swap]{h} (D2)
 (D2) edge  [out=285,in=345, loop, scale = 2]
 node  [swap]{i} (D2)
 ;
 \end{tikzpicture}
 \end{center}
 
\caption{An example of the effect of an elementary folding of the third type (pulling from c).}\label{pull figure}
\end{figure}
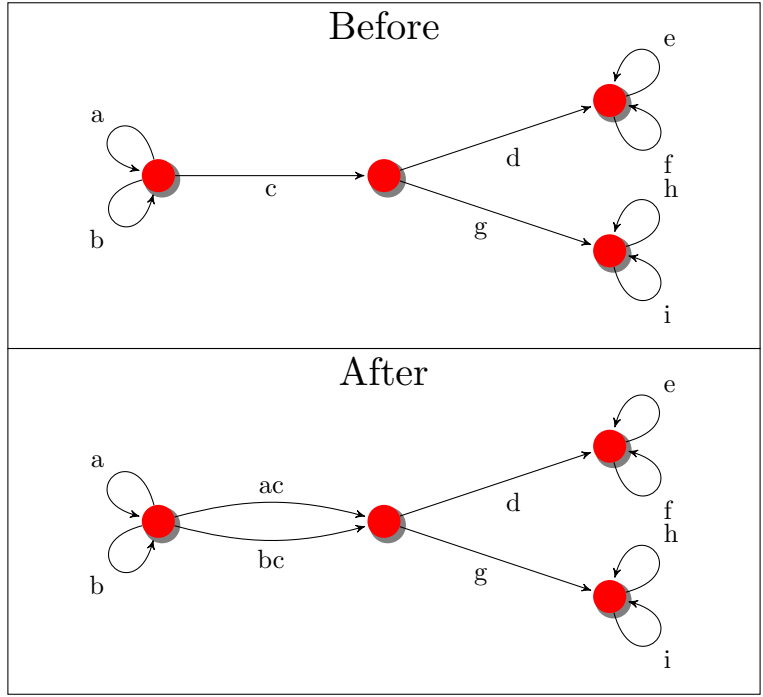

It is routine to verify that the above maps are indeed UDAF foldings.
As the domain digraph \(D_1\) of a pull/push is usually larger than the target digraph \(D_2\), we will we thinking of a pull/push as a operation to replace the digraph \(D_2\) with the digraph \(D_1\) (for example we will say things like ``convert \(D_2\) to \(D_1\) via a push from \(e\)").
\end{defn}

\begin{defn}
We define UDAFG' to be the subcategory of UDAFG generated by the maps \(\phi_*\) and \((\phi_*)^{-1}\) where \(\phi\) is an elementary UDAFG folding between UDAF digraphs.
\end{defn}
We will later show that UDAFG' and UDAFG are actually the same (Theorem~\ref{big groupoid}).

\section{Generation by elementary foldings}
In this section we establish some basic facts about elementary UDAF foldings and show that the categories UDAFG and UDAFG' do indeed coincide (Corollary~\ref{big groupoid}).

We first consider splittings which by the Decomposition Theorem (see Theorem \ref{splitings are nice theorem}) is enough to cover strong shift equivalence.
We then conclude the argument with one of the key lemmas of the document (Lemma~\ref{big lem}).

\begin{defn}[Splittings]
Suppose that \(D\) is a finite digraph and \(\sim\) is an equivalence relation on \(E_D\) such that \(e_1\sim e_2 \Rightarrow (e_1)s_D= (e_2)s_D\).

We define the corresponding \textit{out-splitting} digraph \(\operatorname{OS}(D, \sim)\) of \(D\) with respect to \(\sim\) to be the digraph \(S\) with 
\[V_S= \makeset{[e]_\sim}{\(e\in E_D\)}\quad \text{ and }\quad E_S= \makeset{(e_1, [e_2]_\sim)\in E_D\times V_S}{\((e_1)t_D=(e_2)s_D\)}.\]
Where \((e_1, [e_2]_\sim)\) is an edge from \([e_1]_\sim\) to \([e_2]_\sim\).
We also define the \textit{standard folding} \(f:S\to D\) by
\[([e]_\sim)f_V = (e)s_D \quad\text{ and }\quad (e_1, [e_2]_\sim)f_E = e_1.\]
It is routine to verify that this is a folding.
If \(\sim\) is an equivalence relation on \(E_D\) such that \(e_1\sim e_2 \Rightarrow (e_1)t_D= (e_2)t_D\), then we define the \textit{in-splitting} digraph \(\operatorname{IS}(D, \sim)\) analogously.
\end{defn}

\begin{theorem}[Decomposition Theorem, c.f. Theorem 7.1.2 of \cite{lind2021introduction}]\label{splitings are nice theorem}
Every topological conjugacy between subshifts of finite type can be obtained via a sequence of splittings and their inverses.
\end{theorem}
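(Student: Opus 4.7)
The plan is to reproduce the standard Williams-style argument (as in Lind--Marcus, Theorem 7.1.2), since the statement is really a packaging of the classical decomposition theorem for edge shifts. The goal is: given a topological conjugacy $\psi:(X_{D_1},\sigma_{D_1})\to (X_{D_2},\sigma_{D_2})$, exhibit a finite zig-zag of splittings (and their inverses) realizing $\psi$.

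First I would invoke the Curtis--Hedlund--Lyndon theorem to represent $\psi$ as a sliding block code of some finite window size $N$. This reduces a purely topological statement to a combinatorial one about block codes between digraphs. The next step is to pass to higher block presentations on both sides so that the window can be shrunk to $1$. Concretely, the higher block presentation of $D_i$ is essentially the past--future digraph $\PF(D_i,-N,N)$ of Definition~\ref{past-future def}, and one checks that the standard folding $\PF(D_i,m,n)\to \PF(D_i,m+1,n)$ (or the analogous one narrowing $n$) is, up to relabeling, an out-splitting followed by a reindexing --- and similarly that narrowing on the other side is an in-splitting. Applying finitely many such splittings on $D_1$ and $D_2$, we may assume $\psi$ is represented by a $1$-block code, i.e.\ a vertex (or edge) map commuting with the shift.

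The heart of the proof --- and the main obstacle --- is then to show that a $1$-block topological conjugacy $\phi$ between two edge shifts decomposes into in- and out-splittings. The idea is to use $\phi$ to define an equivalence relation on edges: group edges of the source digraph according to their $\phi$-image together with the local transition data needed for $\phi$ to be a bijection on bi-infinite walks. One then out-splits the source (and, symmetrically, in-splits the target) along this equivalence, refining both digraphs until $\phi$ descends to a genuine labelled digraph isomorphism. Bijectivity of $\phi$ on bi-infinite walks forces the refinement process to terminate in finitely many steps, because each splitting strictly increases a computable combinatorial invariant (e.g.\ the number of edge classes) bounded above by the number of edges in the maximal common refinement.

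Putting the three steps together yields a finite sequence of splittings $D_1 \leftarrow D_1' \leftarrow \cdots \leftarrow D \to \cdots \to D_2' \to D_2$ whose composite realizes $\psi$, which is exactly what the theorem asserts. The genuine work is all in the third step; the first two are essentially formal, relying only on the universal properties of higher block presentations and past--future digraphs already recorded in Definition~\ref{past-future def}.
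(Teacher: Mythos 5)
The paper does not actually prove this statement: it is quoted as the classical Decomposition Theorem, with the ``c.f.\ Theorem 7.1.2 of Lind--Marcus'' serving as the proof. So what you are really being asked to reconstruct is the Williams/Lind--Marcus argument, and your first two steps (Curtis--Hedlund--Lyndon to get a sliding block code, then passing to higher block presentations, which are indeed complete out-/in-splittings and coincide with the past-future digraphs of Definition~\ref{past-future def}) match that argument and are fine.

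The gap is in your third step, which you correctly identify as the heart of the matter but then only gesture at. First, the equivalence relation on edges is never actually defined: ``group edges according to their $\phi$-image together with the local transition data needed for $\phi$ to be a bijection'' is a description of what you want the relation to achieve, not a construction of it. The classical proof makes a specific choice: given a $1$-block conjugacy $\phi$ whose inverse has anticipation $n\geq 1$, one out-splits the \emph{target} graph by a concrete partition of the edges leaving each vertex, pulls that partition back through $\phi$ to out-split the source compatibly, and verifies that the induced $1$-block conjugacy between the split graphs has inverse with anticipation $n-1$ (and symmetrically in-splittings reduce memory). Second, and more seriously, your termination argument does not work as stated: there is no a priori ``maximal common refinement'' whose edge count bounds the process --- knowing that a common refinement of $D_1$ and $D_2$ exists is essentially the content of the theorem, so you cannot use it to prove termination. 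The correct induction is on the total coding window (memory plus anticipation) of $\phi^{-1}$, which the splitting step above decreases by exactly one, terminating when both $\phi$ and $\phi^{-1}$ are $1$-block, i.e.\ when $\phi$ is a labelled graph isomorphism. With that specific partition and that specific induction supplied, your outline becomes the standard proof; without them it is a plan rather than an argument.
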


The notion of irrationality is our main tool for keeping track of unindexed walks (hence the need to restrict the class of UDAF digraphs).
\begin{defn}[Rationality]\label{rational walks}
If \(w:\Z_{[0, \infty]} \to D\) is a digraph homomorphism, then we call \(w\) an \textit{infinite forwards walk}.
Similarly, if \(w:\Z_{[-\infty, 0]} \to D\) is a digraph homomorphism, then we call \(w\) an \textit{infinite backwards walk}.
We say that an infinite forwards walk \(w\) is \textit{rational} if there are \(n, m\in \Z_{\geq 0}\) such that for all \(i\geq n\) we have \[(i)w_V=(i+m)w_V\quad \text{ and } \quad (i, i + 1)w_E=(i+m, i+1 +m)w_E.\]

Similarly if \(w\) is an infinite backwards walk, then we say that \(w\) is \textit{rational} if there are \(n, m\in \Z_{\leq 0}\) such that for all \(i< n\) we have 
\[(i)p_V=(i+m)p_V \quad \text{ and }\quad(i, i + 1)p_E=(i+m, i+1 +m)p_E.\]
If \(p:\Z_{[-\infty, \infty]} \to D\) is a digraph homomorphism, then we say that \(p\) is \textit{rational} if both \(p\restriction_{\Z_{[-\infty, 0]}}\) and \(p\restriction_{\Z_{[0, \infty]}}\) are rational. 
We refer to any of these as \textit{irrational} if they are not rational.
\end{defn}
\begin{theorem}\label{elsplit}
If \(D\) is a finite digraph, \(S\) is an in-splitting or out-splitting of \(D\) and \(f:S\to D\) is the standard folding, then \(f_*\) is an UDAFG' isomorphism.
\end{theorem}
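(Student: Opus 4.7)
The plan is to reduce to a simple base case and then exhibit an explicit zigzag of elementary UDAF foldings realizing $f_*$. By edge-reversal duality, which swaps the roles of pushes and pulls and converts in-splittings into out-splittings, it suffices to handle the out-splitting case. Any out-splitting can then be decomposed as a composition of ``elementary'' out-splittings, each refining the equivalence relation $\sim$ by splitting a single equivalence class into two subclasses; since UDAFG' is closed under composition, we may assume the splitting is elementary, meaning the outgoing edges of some single vertex $v$ are partitioned into exactly two classes $C_1, C_2$ while all other vertices retain the trivial equivalence.

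For this base case, I would construct a zigzag of elementary UDAF foldings connecting $S$ and $D$ whose composition on routes equals $f_*$. The construction is guided by the matrix-operation description of Theorem~\ref{matequivth1}: first apply a type-1 folding appending an isolated auxiliary vertex $v'$ to $D$ (which will serve as the second copy of $v$ in $S$); then interleave pushes, pulls, and their inverses to redistribute the outgoing $C_2$-edges from $v$ to $v'$ while duplicating the incoming edges of $v$ so that they land on both $v$ and $v'$. As a concrete sanity check, for the case $D = R_2$ with its non-trivial two-class out-splitting at the unique vertex, one verifies that a four-step zigzag (add cross, inverse pull, inverse push, push) transforms the adjacency-minus-identity matrix $(1)$ into $\begin{pmatrix} 0 & 1 \\ 1 & 0 \end{pmatrix}$, which is exactly the matrix of the splitting $S$.

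The main technical obstacle is handling loop edges at $v$, since push and pull operations are defined only for non-loop edges. For each such loop $\ell$ at $v$, the construction must first introduce further auxiliary vertices via type-1 foldings and use inverses of push or pull operations to temporarily convert $\ell$ into a non-loop configuration before proceeding with the main redistribution, then reverse these auxiliary moves once the work is done. After assembling the entire chain, the verification that the composition equals $f_*$ is a routine (if somewhat tedious) check: one tracks the action on routes through each elementary folding in the chain, using that every elementary folding has a completely transparent local effect on routes as described in Definition~\ref{elementary UDAF foldings defn}.
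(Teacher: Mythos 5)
Your overall strategy (reduce to a single binary out-split, then realize it by an explicit zigzag of elementary foldings) is reasonable in outline, but as written it has two genuine gaps. First, the zigzag is never actually constructed: you describe its intended effect (``redistribute the outgoing $C_2$-edges \dots while duplicating the incoming edges'') and check one example at the level of relator matrices, but relator-matrix bookkeeping only tracks the isomorphism type of the objects, not the induced bijection on routes. The theorem asserts that the \emph{specific} map $f_*$ coming from the standard folding is an UDAFG' morphism, so one must exhibit concrete elementary foldings and then verify that their composite acts on every route exactly as $f_*$ does; deferring this as ``routine'' is not admissible when the chain to be composed has not been written down. (Relatedly, be careful not to lean on Theorem~\ref{matequivth1} even as a guide to correctness: its proof runs through Corollary~\ref{big groupoid}, hence through Lemma~\ref{big lem}, hence through the present theorem, so any genuine appeal to it here would be circular.) Second, the loop problem you flag is real for your construction, and your proposed fix --- temporarily converting each loop at $v$ into a non-loop configuration via auxiliary vertices --- is only gestured at; since the generic interesting case ($R_n$, Ashley's example, any digraph with nonzero diagonal in its adjacency matrix) has loops at the split vertex, this is the main case rather than a corner case.

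The paper's proof avoids both difficulties by a different construction. Rather than morphing $D$ into $S$ locally at the split vertex, it builds a single intermediate digraph $D_3$ with vertex set $V_D \sqcup V_S$, an edge $(v, [e]_\sim)$ from each $v \in V_D$ to each class of edges with source $v$, and an edge $([e]_\sim, e, (e)t_D)$ from each class to the target of each of its members. Both $D$ and $S$ are then carried onto $D_3$ by compositions of elementary foldings: type 1 to adjoin the new vertices, pulls to adjoin edges whose source as yet has no incoming edges, and pushes to replace each original edge by the corresponding length-$2$ walk through the new vertices. Since every push and pull is performed on an edge joining a $V_D$-vertex to a $V_S$-vertex, no loops ever arise. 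The final verification reduces to checking that a route $\ldots e_1 e_2 \ldots$ of $S$ is sent by both composites to the same interleaved route $\ldots ((e_1)s_D, [e_1]_\sim)([e_1]_\sim, e_1, (e_1)t_D)\ldots$ of $D_3$, which identifies the composite with $f_*$ in one line. I would recommend adopting this ``common refinement'' shape: it handles arbitrary (not merely binary) splittings in a single step, eliminates the loop issue structurally, and makes the identification of the composite with $f_*$ explicit.
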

\begin{proof}
We assume without loss of generality that \(S= \operatorname{OS}(D, \sim)\) for some equivalence relation \(\sim\). 

Let \(D_0:= D\).
Let \(D_1\) be the digraph obtained from \(D_0\) by adding \(V_S\) to the vertex set of \(D_0\) and let \(\phi_0:D_0 \to D_1\) be the embedding.
As \(\phi_0\) is a composition of elementary UDAF foldings of the first type, it follows that \({\phi_0}_*\) is an UDAFG' isomorphism. 

 Let \(D_2\) be the digraph obtained from \(D_1\) by adding the set
 \[\makeset{([e]_\sim, e, (e)t_D)}{\(e\in E_D\)}\]
to the edge set of \(D_1\) (\(([e]_\sim, e, (e)t_D)\) is an edge from the vertex \([e]_\sim\) of \(S\) to the vertex \((e)t_D\) of \(D_1\)).
Let \(\phi_1:D_1 \to D_2\) be the embedding.
To see that \({\phi_1}_*\) is an UDAFG' isomorphism, note that there are no edges in \(D_1\) or \(D_2\) going to any vertices in \(V_S\).
Thus the natural embedding of \(D_1\) into the digraph obtained by adding a single one of these new edges is a elementary UDAF folding of the third type (a pull from the new edge). By repeating this we can add all of the new edges.

Let \(D_3\) be the digraph obtained from \(D_2\) by removing all the edges originally from \(D\) and adding
\(\makeset{(v, [e]_\sim)}{\((e)s_D= v\)}\) to the edge set (\((v, [e]_\sim)\) is an edge from the vertex \(v\) of \(D\) to the vertex \([e]_\sim\)).
Let \(\phi_2:D_2 \to \Wlk(D_3)\) be the digraph homomorphism defined by mapping each \(e\in E_D\) to the walk of length \(2\) using the edges \(((e)s_D, [e]_\sim)\) and \(([e]_\sim, e, (e)t_D)\) (and fixing the rest of the digraph).
It is routine to check that \(\phi_2\) is a composition of elementary UDAF foldings of the second type (pushing from the new edges of \(D_3\)).
Thus \({\phi_2}_*\) is an UDAFG' isomorphism.

Let \(D_6:= S\). Let \(D_5\) be the digraph obtained from \(D_6\) by adding \(V_D\) to the vertex set of \(D_6\) and let \(\phi_6:D_6 \to D_5\) be the embedding.
As \(\phi_6\) is a composition of elementary foldings of the first type, it follows that \({\phi_6}_*\) is an UDAFG' isomorphism.

Let \(D_4\) be the digraph obtained from \(D_5\) by adding \(\makeset{(v, [e]_\sim)}{\((e)s_D= v\)}\) to the edge set (\((v, [e]_\sim)\) is an edge from the vertex \(v\) of \(D\) to the vertex \([e]_\sim\)).

Let \(\phi_5:D_5 \to D_4\) be the embedding.
The fact that \({\phi_5}_*\) is an UDAFG' isomorphism, follows from a symmetric argument to the fact that \({\phi_1}_*\) was an UDAFG' isomorphism.

Finally let \(\phi_4:D_4 \to \Wlk(D_3)\) be the digraph homomorphism defined by mapping each \((e_1, [e_2]_\sim)\in E_S\) to the walk of length \(2\) using the edges \(([e_1]_\sim, e_1, (e_1)t_D)\) and \(((e_1)t_D, [e_2]_\sim)\).
The map \(\phi_4\) is then a composition of elementary UDAF foldings of the third type.

Thus \({\phi_6}_*{\phi_5}_*{\phi_4}_*{\phi_2}_*^{-1}{\phi_1}_*^{-1}{\phi_0}_*^{-1}\) is an UDAFG' isomorphism from \(\rt(S)\) to \(\rt(D)\). 
Moreover, if \((e_1, [e_2]_{\sim})\in E_S\) is arbitrary, then
\begin{align*}
    (e_1, [e_2]_{\sim}){\phi_6}\widetilde{\phi_5}\widetilde{\phi_4}=(e_1, [e_2]_{\sim})\phi_4
\end{align*}
is the walk of length \(2\) using the edges \(([e_1]_\sim, e_1, (e_1)t_D)\) and \(((e_1)t_D, [e_2]_\sim)=((e_2)s_D, [e_2]_\sim)\).
Similarly
\[(e_1, [e_2]_{\sim})f\widetilde{\phi_0}\widetilde{\phi_1}\widetilde{\phi_2}=(e_1)\phi_2\]
is the walk of length \(2\) using the edges \(((e_1)s_D, [e_1]_\sim)\) and \(([e_1]_\sim, e_1, (e_1)t_D)\).

Thus if \(w\in \rt(S)\) is arbitrary and \(\ldots  e_1 e_2\ldots\) is the bi-infinite sequence of edges corresponding to \((w)f^*\), then
\((w){\phi_6}_*{\phi_5}_*{\phi_4}_*\) and \((w)f_*{\phi_0}_*{\phi_1}_*{\phi_2}_*\) are both equal to the route
\[\ldots((e_1)s_D, [e_1]_\sim)([e_1]_\sim, e_1, (e_1)t_D)((e_2)s_D, [e_2]_\sim)([e_2]_\sim, e_2, (e_2)t_D)\ldots\]
of \(D_3\). 
Thus \(f_*={\phi_6}_*{\phi_5}_*{\phi_4}_*{\phi_2}_*^{-1}{\phi_1}_*^{-1}{\phi_0}_*^{-1}\) and the result follows.

\end{proof}

\begin{corollary}\label{SSE implies UDAF cor}
If \(D_1\) and \(D_2\) are strong shift equivalent UDAF digraphs, then the objects \((\Wlk(D_1), D_1)\) and \((\Wlk(D_2), D_2)\) are isomorphic in UDAFG'.
\end{corollary}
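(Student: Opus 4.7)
The plan is to chain together the Decomposition Theorem (Theorem~\ref{splitings are nice theorem}) with Theorem~\ref{elsplit}. By hypothesis, the strong shift equivalence of $D_1$ and $D_2$ provides a topological conjugacy between the two-sided subshifts of finite type $(X_{D_1}, \sigma_{D_1})$ and $(X_{D_2}, \sigma_{D_2})$. The Decomposition Theorem then furnishes a sequence of finite digraphs
\[D_1 = E_0, E_1, \ldots, E_n = D_2\]
together with standard foldings $f_i$, such that for each $i$ either $f_i : E_i \to E_{i-1}$ or $f_i : E_{i-1} \to E_i$ is the standard folding of an in-splitting or out-splitting, and the induced composition on bi-infinite walks realises the conjugacy.

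For each $i$, Theorem~\ref{elsplit} tells us that $(f_i)_*$ is an UDAFG' isomorphism. Since UDAFG' is closed under composition and taking inverses (as it is defined to be a subcategory of the groupoid UDAFG), the alternating composition of the $(f_i)_*^{\pm 1}$ in the appropriate direction yields an UDAFG' isomorphism from $(\rt(D_1), D_1)$ to $(\rt(D_2), D_2)$. By tracking flattenings through each step (using Remark~\ref{flattening set}), this composite agrees with the route bijection induced by the original topological conjugacy, so the claim will follow once the chain is assembled.

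The main obstacle I foresee is ensuring that the intermediate digraphs $E_i$ in the decomposition are themselves UDAF digraphs in the sense of Definition~\ref{udaf digraph defn}, so that the chain of morphisms genuinely lives inside UDAFG' rather than in some larger ambient category. This should follow from the fact that the two UDAF conditions concern whether a vertex admits a unique infinite forward (respectively backward) walk, a property detectable in the one-sided shift dynamics and therefore stable under the conjugacy that links each $E_i$ with $D_1$. Should this preservation require additional care, one can interlace the splittings with elementary UDAF foldings of the first type to temporarily enlarge each $E_i$ by isolated auxiliary vertices without changing its route set, thereby keeping every object in the UDAF class while leaving the induced bijections on routes unaffected.
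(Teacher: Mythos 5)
Your proposal matches the paper's (implicit) argument exactly: the corollary is stated immediately after Theorem~\ref{elsplit} with no separate proof, the intended justification being precisely the Decomposition Theorem (Theorem~\ref{splitings are nice theorem}) combined with Theorem~\ref{elsplit} and the closure of UDAFG' under composition and inverses. Your worry about the intermediate digraphs resolves directly --- out-splittings, in-splittings, and their amalgamations preserve conditions (2) and (3) of Definition~\ref{udaf digraph defn}, since the count of infinite forward (resp.\ backward) walks at a vertex of the split or amalgamated digraph is a finite sum of such counts in the original digraph, each of which is $0$ or at least $2$ --- though note that your fallback of interlacing elementary foldings of the first type would not actually help, because adding an isolated vertex leaves the walk counts at every pre-existing vertex unchanged.
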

\begin{defn}
If \(D_1, D_2\) are digraphs, then the set \(\Hom(D_1,D_2)\) is equipped with a topology, which we will use throughout the rest of the document.
A digraph homomorphism \(p:D_1\to D_2\) is a pair of maps \((p_V, p_E)\) each of which is an element of a space of functions using the topology of pointwise convergence (with discrete topology on the targets), thus we obtain a natural topology on \(\Hom(D_1,D_2)\) (the product topology).
\end{defn}
We next define the notions of forwards and backwards images of a digraph homomorphism at a vertex. 
We also extend these notions to digraph homomorphisms to walk digraphs. 
These will parallel the images of transducer states in Section~\ref{GNS section}.
\begin{defn}\label{forwards/backwards image}
If \(D_1, D_2\) are finite digraphs, \(v\in V_{D_1}\) and \(f:D_1\to D_2\) is a digraph homomorphism, then we define the forwards and backwards images of \(v\) with respect to \(f\) by
\[\fim_f(v) := \makeset{p\in \Hom(\Z_{[0, \infty]}, D_1)}{\((0)p_V=v\)}\circ f,\]
\[\bim_f(v) := \makeset{p\in \Hom(\Z_{[-\infty, 0]}, D_1)}{\((0)p_V=v\)}\circ f.\]
Note that \(\makeset{p\in \Hom(\Z_{[0, \infty]}, D_1)}{\((0)p_V=v\)}\) is a closed subspace of a product of finite discrete spaces, and is thus compact.
Moreover post composition with \(f\) is a continuous map from \(\Hom(\Z_{[0, \infty]}, D_1)\) to \(\Hom(\Z_{[0, \infty]}, D_2)\).
It follows that the sets \(\fim_f(v)\) (and similarly \(\bim_f(v)\)) are always compact.
\end{defn}

\begin{defn}
Let \(\operatorname{bcat}:
\Hom(\Z_{[-\infty, 0]}, \Wlk(D_2))\to \Hom(\Z_{[-\infty, 0]}, D_2)\)
and \(\operatorname{fcat}:\Hom(\Z_{[0, \infty]}, \Wlk(D_2))\to \Hom(\Z_{[0,\infty]}, D_2)\) be the backwards and forwards concatenation/flattening maps respectively.
That is if \(w\in \Hom(\Z_{[0, \infty]}, \Wlk(D_2))\) and for all \(n\geq 0\) and \(j< \norm{(n, n+1)w_E}\) we define \(m_{n, j}:=j+\sum_{i<n}\norm{(i, i+1)w_E}\), then \((w)\operatorname{fcat}\) is the digraph homomorphism with
\[(m_{n,j})((w)\operatorname{fcat})_V=(j)((n, n+1)w_E)_V\]
and
\[(m_{n,j}, m_{n,j}+1)((w)\operatorname{fcat})_E=(j, j+1)((n, n+1)w_E)_E\]
(\(\operatorname{bcat}\) is defined analogously).

Now if \(f:D_1\to \Wlk(D_2)\) is a digraph homomorphism, then we extend the notions of forwards and backwards images to \(f\) by defining
\[\fim_f(v) := \left(\makeset{p\in \Hom(\Z_{[0, \infty]}, D_1)}{\((0)p_V=v\)}\circ f \right) \operatorname{fcat},\]
\[\bim_f(v) := \left(\makeset{p\in \Hom(\Z_{[-\infty, 0]}, D_1)}{\((0)p_V=v\)}\circ f\right) \operatorname{bcat}.\]
\end{defn}

The proof of the following lemma is highly technical but it is perhaps the most important lemma in this document.
It allows us to carry over many of the useful properties of foldings to UDAF foldings. 
\begin{lemma}\label{big lem}
If \(D_1, D_2\) are UDAF digraphs and \(f:D_1\to \Wlk(D_2)\) is an UDAF folding, then the following hold:
\begin{enumerate}
    \item There is a past-future digraph \(P\) with standard folding \(h:P\to D_2\), and an UDAF folding \(g:P\to \Wlk(D_1)\) with \(h_*=g_*f_*\) and such that \(g_*\) is an UDAFG' isomorphism.
    \item If \(v\in V_{D_1}\), then the sets \(\fim_f(v)\) and \(\bim_f(v)\) are clopen subsets of \(\Hom(\Z_{[0,\infty]}, D_2)\) and \(\Hom(\Z_{[-\infty, 0]}, D_2)\) respectively.
\end{enumerate}
\end{lemma}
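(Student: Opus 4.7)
The plan is to establish part~(2) first via a compactness argument built on the bijectivity of $f_*$, and then leverage it to construct the past-future digraph $P$ and the UDAF folding $g$ for part~(1).

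For part~(2), I would first observe that $\fim_f(v)$ is compact, being the continuous image under ``post-compose with $f$ and flatten'' of the compact set $\{p\in\Hom(\Z_{[0,\infty]},D_1):(0)p_V=v\}$; the argument for $\bim_f(v)$ is symmetric. The harder direction, openness, I would prove by contradiction: suppose some $q\in\fim_f(v)$ has a sequence $q_n\to q$ with each $q_n\notin\fim_f(v)$. Using the UDAF digraph conditions on $D_2$ (which force every vertex to admit either zero or infinitely many infinite backward walks), I would extend each $q_n$ and $q$ to non-degenerate bi-infinite walks $\tilde q_n,\tilde q$, passing to a subsequence so that $\tilde q_n\to\tilde q$. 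Bijectivity of $f_*$ lifts each route $[\tilde q_n]$ to a unique $[\tilde p_n]$ in $D_1$, and I would choose representatives so that position $0$ of $\tilde q_n$ lies within the flattened image of a particular edge of $\tilde p_n$. Compactness of $\Hom(\Z_{[-\infty,\infty]},D_1)$ gives a convergent subsequence $\tilde p_n\to\tilde p$, which I would check is non-degenerate and satisfies $[\tilde p]=f_*^{-1}([\tilde q])$. Since $q\in\fim_f(v)$, the chosen alignment forces the selected edge of $\tilde p$ to start at $v$, and by convergence the same holds for $\tilde p_n$ for large $n$, contradicting $q_n\notin\fim_f(v)$.

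For part~(1), part~(2) applied in both directions supplies $m\leq 0\leq n$ with the following ``finite-window decoding'' property: for every bi-infinite walk in $D_2$, its restriction to $\Z_{[m,n]}$ determines which $\fim_f(u)$ and $\bim_f(u')$ its forward and backward halves belong to, and hence pins down the vertex of $D_1$ at position $0$ of the unique $f_*$-preimage route (up to the shift ambiguity present for rational routes). I would then set $P:=\PF(D_2,m,n)$ with standard folding $h:P\to D_2$, and define $g:P\to\Wlk(D_1)$ by mapping each vertex $\alpha\in V_P$ to the $D_1$-vertex determined by its context and each edge $\beta\in E_P$ to the (possibly trivial) $D_1$-walk swept out by advancing one $D_2$-step along the lifted route, with any consistent choice on the ``ambiguous'' rational vertices. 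The construction forces $h_*=g_*f_*$ and $g_*$ to be a bijection on routes, so $g$ is an UDAF folding.

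To show $g_*$ is an UDAFG' isomorphism, I would decompose $g$ into a sequence of elementary UDAF foldings. Starting from $D_1$, a sequence of pushes and pulls from Definition~\ref{elementary UDAF foldings defn} yields an intermediate digraph whose edges carry the required $f$-induced walks in $D_2$; the past-future refinement from this intermediate digraph to $P$ is then a sequence of in- and out-splittings, each of which is UDAFG' by Theorem~\ref{elsplit}. The main obstacle I foresee is the compactness argument in part~(2): rational routes admit multiple alignments of representatives, and ensuring the subsequence $\tilde p_n$ stays aligned uniformly enough for the limit $\tilde p$ to inherit the correct covering edge at position $0$ requires careful combinatorial bookkeeping using the finiteness of $D_1$ and $D_2$.
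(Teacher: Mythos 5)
There is a genuine gap in your argument for part (2), and it sits exactly at the sentence ``the chosen alignment forces the selected edge of $\tilde p$ to start at $v$.'' The route $f_*^{-1}([\tilde q])$ is unique only as a $\langle\sigma\rangle$-orbit, and $q\in\fim_f(v)$ only supplies \emph{some} bi-infinite walk through $v$ whose image realises $q$ as its forward half; it does not say that \emph{the} representative singled out by your alignment convention passes through $v$ at position $0$. If $q$ also lies in $\fim_f(v')$ for some $v'\neq v$ over the same backward tail, there are two walks $w\star a$ (through $v$) and $w_2\star b$ (through $v'$) with equal images, hence $w\star a=\sigma^n(w_2\star b)$ for some $n\neq 0$, and the aligned representative can legitimately place either $v$ or $v'$ at the distinguished position. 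This is not bookkeeping that finiteness resolves: for a periodic route the lift genuinely admits several alignments landing on different vertices, and your final contradiction evaporates. What is needed (and what the paper uses) is to take the backward extension to be an \emph{irrational} backward walk through $v$ in $D_1$ (available by the UDAF digraph conditions, which give uncountably many backward walks at any vertex lying on a bi-infinite walk). Then $w\star a=\sigma^n(w_2\star b)$ with $n\neq 0$ forces $(w\star a)f$, and hence by bijectivity of $f_*$ the walk $w\star a$ itself, to be rational --- a contradiction. This yields the disjointness $\fim_f(v)\cap\fim_f(v')=\varnothing$ for distinct vertices sharing the irrational backward image, and openness follows because the complement of $\fim_f(v)$ inside $\makeset{s}{\((0)s_V=(v)f_V\)}$ is then a finite union of compact sets. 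Your proposal never invokes irrationality, and the ``careful combinatorial bookkeeping'' you defer to is precisely this missing argument.

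Your route to part (1) also differs from the paper's: you build $g$ on $\PF(D_2,m,n)$ directly from a finite decoding window, whereas the paper first modifies $f$ by pushes, pulls and vertex additions (all inducing UDAFG' isomorphisms, and arranged so that each $\fim_f(v)$ is preserved) until $f$ is an honest digraph homomorphism $D_1\to D_2$, proves that the induced map on bi-infinite walks is a bijection, and only then converts the domain into a past-future digraph via the Decomposition Theorem and Theorem~\ref{elsplit}. Your construction could probably be made to work, but both the well-definedness of the decoding window and the factorisation of $g$ into elementary foldings rest on the disjointness statement above, so the irrationality argument is unavoidable on either path.
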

\begin{proof}
To show the first point, we will sequentially modify \(D_1\) and \(f\) using UDAF foldings which induce UDAFG' isomorphisms so that \(f\)  will induce a bijection between \(\Hom(\Z_{[-\infty, \infty]},D_1)\) and \(\Hom(\Z_{[-\infty, \infty]}, D_2)\).
This is sufficient as then \(f\) will be a folding (as in Definition~\ref{normal foldings defn}) and hence the domain of \(f\) can be appropriately converted into a past-future digraph of \(D_2\) using splittings (recall Theorem~\ref{splitings are nice theorem} and the discussion of past-future digraphs). 
In particular, the required \(g\) will be the composite of a standard folding from a past-future digraph of \(D_2\) to the new domain of \(f\) with the UDAF foldings we will use to modify \(f\).
In order to work towards the second point of the lemma, we will also be modifying \(D_1\) and \(f\) in such as way that \(D_1\) never loses any of its original vertices and for each \(v\in V_{D_1}\), the set \(\fim_f(v)\) does not change.
The statement we want to prove about \(\bim_f(v)\) is symmetric with the statement about \(\fim_f(v)\), so we will only consider \(\fim_f(v)\) (indeed our changes to \(f\) may affect \(\bim_f(v)\)).

If \(f\) maps an edge to a length \(0\) walk, then there is an edge \(e_1\) which is mapped to a length \(0\) walk (by \(f\)) such that all edges \(e_2\) with \((e_1)t_{D_1}=(e_2)s_{D_1}\) are not mapped to length \(0\) walks (this follows as otherwise \(f\) would be degenerate).
Thus by precomposing \(f\) with a push from this edge \(e_1\), we can reduce the number of edges mapping to length \(0\) walks by \(1\).
Thus by repeating this, we can assume without loss of generality that \(f\) maps every edge to a walk of positive length. 
Note that this process does not change \(\fim_f(v)\) for any vertex \(v\) (it might change \(\bim_f(v)\)). 

If \(f\) maps an edge \(e\) to a walk of length greater than \(1\), then we make the following modifications to \(D_1\).
We add a new vertex \(v_e\) and add an edge from the start of \(e\) to \(v_e\) (this can be done with the inverse of an elementary UDAF folding of type 1, followed by the inverse of a push from the new edge).
We then replace the edge \(e\) with an edge from \(v_e\) to the end of \(e\) (this can be done via a the inverse of a push from the edge going to \(v_e\)).
The result of this paragraph is an UDAFG' isomorphism from a digraph \(D_1'\) to \(D_1\) where \(D_1'\) is obtained from \(D_1\) by replacing of the edge \(e\) by a length \(2\) walk.

We can find an UDAF folding to induce this UDAFG' isomorphism by fixing the shared parts of \(D_1'\) and \(D_1\), mapping the edge going to \(v_e\) to \(e\) and mapping the other edge to the walk of length \(0\) at \((e)t_{D_1}\).
Note that this process does not change \(\fim_f(v)\) for any vertex \(v\).

By repeated applications of this process we can replace \(e\) with a path of the same length as \((e)f\).
The new \(\tilde{f}\) will now map these paths to the same walks that \(f\) mapped the original edge \(e\) to.
By repeatedly applying this process we may assume without loss of generality that for every edge \(e\) of \(D_1\) which is mapped (by \(f\)) to a walk of positive length, \(e\) is the start of a path of length \(\norm{(e)f}\) in \(D_1\) (with no other edges connecting to its internal vertices) for which all the other edges in the path are mapped to length \(0\) walks. 
Moreover from before we may assume that all edges mapped to length \(0\) walks are of the above type.
It follows that we can define a unique digraph homomorphism \(f':D_1 \to D_2\) by the equalities \((p)\tilde{f'}=(p)\tilde{f}\) for each path \(p\) of the type above.
In particular \(f'_*=f_*\) and for each vertex \(v\) of \(D_1\) there is a vertex \(v'\) of \(D_1\) such that the forwards image of \(f\) at \(v\) is equal to the forwards image of \(f'\) at \(v'\) (if \(v\) in the interior of one of the paths constructed above then we can choose \(v'\) to be the vertex at the end of the path). 
We may therefore assume without loss of generality that \(f\) is a digraph homomorphism from \(D_1\) to \(D_2\).

As \(f:D_1\to D_2\) is a digraph homomorphism, we have that \(\Hom(\Z_{[-\infty, \infty]}, D_1) f\subseteq \Hom(\Z_{[-\infty, \infty]}, D_2)\). 
As \(f_*\) is a surjection, it follows from Remark~\ref{flattening set} that the map \(w\mapsto wf\) from \(\Hom(
\Z_{[-\infty, \infty]}, D_1)\) to \(\Hom(
\Z_{[-\infty, \infty]}, D_2)\) is a surjection.
It remains to show that it is an injection and that the sets \(\fim_f(v)\) are clopen.

\underline{Claim} For every vertex \(v\in D_1\), the set \(\fim_f(v)\) is clopen.\\
\underline{Proof of Claim}:
As mentioned in Definition~\ref{forwards/backwards image}, the sets \(\fim_f(v)\) are closed, so we will now show that they are open. 
It suffices to consider the vertices of \(D_1\) which are involved in bi-infinite walks, as the forwards walks from any vertex will always reach circuits.
Let \(v\) be such a vertex of \(D_1\). 
As \(D_1\) is an UDAF digraph and \(v\) is involved in a bi-infinite walk, we can find an irrational infinite backwards walk \(w:\Z_{[-\infty, 0]}\to D_1\) with \((0)w_V=v\) (recall Definitions~\ref{udaf digraph defn} and \ref{rational walks}).

Note that
\begin{align*}
    \makeset{s\in \Hom(\Z_{[0, \infty]}, D_2)}{\((0)s_V=(v)f_V\)}
    &=\makeset{x\restriction_{\Z_{[0, \infty]}}}{\(x\in \Hom(\Z_{[-\infty, \infty]}, D_2)\)\\
    and \(x\restriction_{\Z_{[-\infty, 0]}}=wf\)}\\
    &=\bigcup_{v':wf\in \bim_f(v')} {\fim_f(v')}.
\end{align*}
Thus to show that \(\fim_f(v)\) is open, it suffices to show that the forwards images of the \(v'\) vertices (other than \(v\)) in the above union are disjoint from the forwards image of \(v\) (as then the complement of \(\fim_f(v)\) will be closed).

Suppose for a contradiction that \(a, b\in \Hom(\Z_{[0, \infty]}, D_1)\) with \((0)a_V=v\), \((0)b_V=v'\neq v\), \(wf\in \bim_f(v')\) and \(af=bf\). 
As \(wf\in \bim_f(v')\), we can find an infinite backwards walk \(w_2\in \Hom(\Z_{[-\infty, 0]}, D_1)\) with
\((0){w_2}_V=v'\) and \(w_2f=wf\).
Let \(w\star a:\Z_{[-\infty, \infty]}\to D_1\) be the bi-infinite walk which has both \(w\) and \(a\) as restrictions, and define \(w_2\star b\) analogously.

Thus by definition we have \((w\star a)f = (w_2\star b)f\). 
So by Remark~\ref{flattening set} (and the fact that \(f_*\) is a bijection), there is \(n\in \Z\) such that \(w\star a= \sigma^n\circ (w_2\star b)\). 
As \(v\neq v'\), \(w\star a \neq w_2\star b\) and so \(n\neq 0\). 
Thus as
\[(w\star a)\circ f=\sigma^n \circ (w_2\star b)\circ f=\sigma^n\circ (w\star a)\circ f,\]
it follows that \((w\star a)\circ f\) is rational. 
Let \(k_1< k_2\) be multiples of \(n\) such that \((k_1n)(w\star a)_V= (k_2n)(w\star a)_V\).
It follows that the bi-infinite walk \(r\) in \(D_1\) which repeats \((w\star a)\restriction_{\Z_{[k_1n, k_2n]}}\) infinitely, is mapped to the same bi-infinite walk (by \(f\)) as the \(w\star a\).
Hence as \(f_*\) is a bijection, it follows that \(w\star a\in \langle\sigma\rangle \circ r\) is rational.
This is a contradiction as \(w\) was chosen to be irrational.\(\diamondsuit\)\\

We can now finally show that the map \(w\mapsto wf\) from \(\Hom(
\Z_{[-\infty, \infty]}, D_1)\) to \(\Hom(
\Z_{[-\infty, \infty]}, D_2)\) is injective.
Let \(w_1, w_2\in \Hom(
\Z_{[-\infty, \infty]}, D_1)\) be arbitrary with \(w_1f=w_2f\). Let \(z\in \Z\) be arbitrary. It suffices to show that \((z-1, z){w_1}_E= (z-1, z){w_2}_E\).
Note that the set \(\fim_f((z){w_1}_V)\cap \fim_f((z){w_2}_V)\) is open (it is also non-empty). 
Thus we can find \(w_1', w_2'\in \Hom(\Z_{[-\infty, -\infty]}, D_1)\) such that
\[w_1\restriction_{\Z_{[-\infty,z]}}=w_1'\restriction_{\Z_{[-\infty,z]}},\quad w_2\restriction_{\Z_{[-\infty,z]}}=w_2'\restriction_{\Z_{[-\infty,z]}}, \quad w_1'f=w_2'f\]
and \((w_1'f)\restriction_{\Z_{[z, \infty]}}=(w_2'f)\restriction_{\Z_{[z, \infty]}}\) is irrational.

It follows that there is \(n\in \Z\) with \(w_1'=\sigma^n w_2'\). Thus \((w_1'f)\restriction_{\Z_{[z, \infty]}}=(\sigma^nw_2'f)\restriction_{\Z_{[z, \infty]}}\). As this walk is irrational, we must have \(n=0\), so 
\[(z-1, z){w_1}_E=(z-1, z){w_1'}_E= (z-1, z){\sigma^n w_2'}_E=(z-1, z){w_2'}_E=(z-1, z){w_2}_E\]
as required.

\end{proof}
\begin{corollary}\label{big groupoid}
UDAFG=UDAFG'.
\end{corollary}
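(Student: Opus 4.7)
The plan is to show that every morphism in UDAFG lies in UDAFG', which since both categories have the same objects and the same generating collection of morphisms (up to inversion), will force equality. It suffices to check that for an arbitrary UDAF folding $f:D_1\to \Wlk(D_2)$ between UDAF digraphs, the bijection $f_*$ is an UDAFG' isomorphism.

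Apply Lemma~\ref{big lem}(1) to $f$ to produce a past-future digraph $P$ of $D_2$, its standard folding $h:P\to D_2$, and an UDAF folding $g:P\to \Wlk(D_1)$ such that $h_*=g_*f_*$ and $g_*$ is an UDAFG' isomorphism. Since UDAFG is a groupoid, we may solve for $f_*=g_*^{-1}h_*$, so the task reduces to showing $h_*$ is an UDAFG' isomorphism.

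Because $h$ is the standard folding of a past-future digraph, it is a folding in the classical sense of Definition~\ref{normal foldings defn} and so induces a topological conjugacy between the two-sided subshifts of finite type of $P$ and $D_2$. By the Decomposition Theorem (Theorem~\ref{splitings are nice theorem}), this conjugacy can be realised as a finite sequence of in-splittings and out-splittings together with their inverses. Each such splitting contributes, via Theorem~\ref{elsplit}, an UDAFG' isomorphism at the level of routes, and these compose (using also inverses, which are allowed in UDAFG' by definition) to recover $h_*$.

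Thus $h_*$ is an UDAFG' isomorphism, and hence so is $f_*=g_*^{-1}h_*$, completing the proof. The main conceptual obstacle has already been absorbed by Lemma~\ref{big lem}: the entire burden of turning an arbitrary UDAF folding into a genuine folding between past-future digraphs (so that the classical splitting machinery applies) is carried by that lemma, and the remainder is a straightforward concatenation of results previously established in this section.
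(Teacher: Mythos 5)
Your proposal is correct and is essentially the paper's own argument: the paper proves this corollary by citing exactly Lemma~\ref{big lem} and Theorem~\ref{elsplit}, and your write-up simply makes explicit the reduction of an arbitrary UDAF folding to a standard past-future folding, followed by the Decomposition Theorem and Theorem~\ref{elsplit} to handle that folding. No gaps; you have just unpacked the one-line proof the paper gives.
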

\begin{proof}
Lemma~\ref{big lem} and Theorem~\ref{elsplit}.
\end{proof}
\section{Transducerable maps}
We now introduce UDAF transducers, this viewpoint of UDAF is particularly useful when connecting UDAFG isomorphisms groups to outer automorphism groups of Thompson's groups.
\begin{defn}
If \(A, B\) are UDAF digraphs and \(\phi: \rt(A) \to \rt(B)\) is a bijection, then we say that \(\phi\) is \textit{transducerable} if there are is an UDAF digraph \(T\) and UDAF foldings \(g:T \to \Wlk(A)\), \(h:T\to \Wlk(B)\) with \({g_*}^{-1}h_*= \phi\).
In this case we say that \((T, g, h)\) is an \textit{UDAF transducer} (which defines \(\phi\)). 
\end{defn}
\begin{lemma}\label{transducerable category}
Compositions of transducerable maps are transducerable.
\end{lemma}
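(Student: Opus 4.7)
The plan is to realize the composition by taking a common refinement of the two middle digraphs after first forcing the ``$B$-facing" legs of both transducers into a standard form. Let $\phi_1:\rt(A)\to\rt(B)$ and $\phi_2:\rt(B)\to\rt(C)$ be transducerable, with UDAF transducers $(T_1,g_1,h_1)$ and $(T_2,g_2,h_2)$.

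First I would use Lemma~\ref{big lem}(1) as preprocessing. Applying it to $h_1:T_1\to\Wlk(B)$ produces a past-future digraph $P_1$ of $B$ with standard folding $h_1':P_1\to B$ and an UDAF folding $j_1:P_1\to\Wlk(T_1)$ with $(h_1')_*=(j_1)_*(h_1)_*$ and $(j_1)_*$ an UDAFG$'$-isomorphism. Then $(P_1,\, j_1\,\widetilde{g_1},\, h_1')$ is a transducer for $\phi_1$, since $((j_1)_*(g_1)_*)^{-1}(h_1')_*=(g_1)_*^{-1}(h_1)_*=\phi_1$. Performing the symmetric move on $g_2:T_2\to\Wlk(B)$, I may assume without loss of generality that each $T_i$ is a past-future digraph of $B$ and that $h_1:T_1\to B$, $g_2:T_2\to B$ are the standard foldings. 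This brings both middle legs into a uniform shape that lands in $B$ itself, rather than in $\Wlk(B)$.

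Next I would form the common refinement. Write $T_i=\PF(B,m_i,n_i)$ and set $T=\PF(B,\min(m_1,m_2),\max(n_1,n_2))$, which is an UDAF digraph because the property in Definition~\ref{udaf digraph defn} passes up along past-future constructions over $B$. By the paragraph following Definition~\ref{past-future def}, there are innate standard foldings $\pi_1:T\to T_1$ and $\pi_2:T\to T_2$ with $\pi_1h_1$ and $\pi_2g_2$ both equal to the standard folding $T\to B$; in particular $\pi_1h_1=\pi_2g_2$. Each $\pi_i$ is an UDAFG$'$-isomorphism by Theorem~\ref{elsplit} (past-future digraphs are obtained by iterated splittings). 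Now define
\[
g:=\pi_1\,\widetilde{g_1}:T\to\Wlk(A),\qquad h:=\pi_2\,\widetilde{h_2}:T\to\Wlk(C).
\]
Both are compositions of UDAF foldings, hence themselves UDAF foldings, with $g_*=(\pi_1)_*(g_1)_*$ and $h_*=(\pi_2)_*(h_2)_*$ bijections on routes.

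Finally I would verify that $(T,g,h)$ realizes $\phi_1\phi_2$:
\[
g_*^{-1}h_* \;=\; (g_1)_*^{-1}(\pi_1)_*^{-1}(\pi_2)_*(h_2)_*.
\]
Applying $(-)_*$ to $\pi_1h_1=\pi_2g_2$ gives $(\pi_1)_*(h_1)_*=(\pi_2)_*(g_2)_*$, hence $(\pi_1)_*^{-1}(\pi_2)_*=(h_1)_*(g_2)_*^{-1}$, and substituting yields $g_*^{-1}h_*=(g_1)_*^{-1}(h_1)_*(g_2)_*^{-1}(h_2)_*=\phi_1\phi_2$. The main obstacle is the type mismatch in the naive construction: $h_1$ and $g_2$ target $\Wlk(B)$ rather than $B$, so a direct fiber product over $B$ is not available; the whole force of the plan lies in using Lemma~\ref{big lem}(1) to replace both legs by standard foldings of past-future digraphs, after which the common refinement $T=\PF(B,m,n)$ serves as a genuine pullback and the identification $\pi_1h_1=\pi_2g_2$ drops out for free.
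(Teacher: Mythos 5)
Your proposal is correct and follows essentially the same route as the paper: both use Lemma~\ref{big lem}(1) to replace the two $B$-facing legs by standard foldings of past-future digraphs of $B$, then pass to a common past-future digraph (your explicit $\PF(B,\min(m_1,m_2),\max(n_1,n_2))$ with its innate foldings $\pi_i$ is exactly the paper's ``adding past and future information to $P_1$ and $P_2$ so that they coincide''), and finally read off the composite transducer by the same cancellation $(\pi_1)_*^{-1}(\pi_2)_*=(h_1)_*(g_2)_*^{-1}$. The only difference is organizational, not mathematical.
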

\begin{proof}
Let \((T_1, f_1, g_1)\) and \((T_2, f_2, g_2)\) be transducers such that the target of \(g_1\) is the same as the target of \(f_2\). It suffices to show that the map \({{f_1}_*}^{-1}{g_1}_*{{f_2}_*}^{-1}{g_2}_*\) is transducerable. By Lemma \ref{big lem}, let \(h_1:P_1\to \Wlk(T_1)\) and \(h_2:P_2\to \Wlk(T_2)\) be such that \((h_1g_1)_*\) and \((h_2f_2)_*\) can be induced by standard past-future digraph UDAF foldings. 

By adding past and future information to \(P_1\) and \(P_2\) such that they coincide (see the comments in Definition~\ref{past-future def}) we may assume without loss of generality that \((h_2f_2)_*=(h_1f_1)_*\) and \(P_1=P_2\).
Thus
\begin{align*}
    {{f_1}_*}^{-1}{g_1}_*{{f_2}_*}^{-1}{g_2}_*    &= {{f_1}_*}^{-1}{{h_1}_*}^{-1}{h_1}_*{g_1}_*{{f_2}_*}^{-1}{{h_2}_*}^{-1}{h_2}_*{g_2}_*\\
     &= {{f_1}_*}^{-1}{{h_1}_*}^{-1}{h_1}_*{g_1}_*({h_2}_*{f_2}_*)^{-1}{h_2}_*{g_2}_*\\
     &= {{f_1}_*}^{-1}{{h_1}_*}^{-1}{h_1}_*{g_1}_*({h_1}_*{g_1}_*)^{-1}{h_2}_*{g_2}_*\\
     &= {{f_1}_*}^{-1}{{h_1}_*}^{-1}{h_2}_*{g_2}_*.
\end{align*}
This is then defined by the transducer \((P_1, h_1f_1, h_2g_2)\).
\end{proof}
\begin{theorem}[UDAF isomorphisms]\label{udaf isomorphisms theorem}
If \(A, B\) are UDAF digraphs and \(\phi: \rt(A)\to \rt(B)\) is a bijection, then the following are equivalent:
\begin{enumerate}
    \item \(\phi\) is an UDAFG isomorphism.
    \item \(\phi\) is an UDAFG' isomorphism.
    \item \(\phi\) is transducerable.
\end{enumerate}
\end{theorem}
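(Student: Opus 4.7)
The plan is essentially to package results already established in the paper. First, the equivalence (1) $\Leftrightarrow$ (2) is precisely Corollary~\ref{big groupoid}, so everything reduces to proving (2) $\Leftrightarrow$ (3).

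For (3) $\Rightarrow$ (1), I would observe that if $(T, g, h)$ is an UDAF transducer with $\phi = g_*^{-1} h_*$, then $g_*$ and $h_*$ are morphisms of UDAFG by the very definition of that category, and UDAFG is closed under composition and inversion, so $\phi$ is a UDAFG isomorphism. This direction is immediate from the definitions and requires no further work.

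For (2) $\Rightarrow$ (3), I would show each generator of UDAFG' is transducerable and then invoke Lemma~\ref{transducerable category} to conclude. The key observation is that for any UDAF digraph $D$, the natural embedding $\iota_D : D \to \Wlk(D)$ sending each edge to the corresponding length-$1$ walk is an UDAF folding with $(\iota_D)_*$ equal to the identity on $\rt(D)$; this is immediate because $\widetilde{\iota_D}$ is the identity on the walk category $\Wlk(D)$. Hence for any elementary UDAF folding $\phi: D_1 \to \Wlk(D_2)$, the triple $(D_1, \iota_{D_1}, \phi)$ is an UDAF transducer that defines $\phi_* = ((\iota_{D_1})_*)^{-1}\,\phi_*$, while $(D_1, \phi, \iota_{D_1})$ is a transducer defining $\phi_*^{-1}$. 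Since every UDAFG' morphism is, by definition, a finite composite of such generators and their inverses, Lemma~\ref{transducerable category} completes the proof.

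There is no substantial obstacle here: the theorem is essentially a bookkeeping result linking the groupoid picture of UDAFG, the generating-set picture of UDAFG', and the transducer picture. The only subtlety worth flagging is the technical caveat mentioned in the paper about the ambiguity of the domain/target pair attached to a bijection $\phi_*$; one tracks the digraphs throughout by treating morphisms implicitly as triples, so expressions like $((\iota_{D_1})_*)^{-1}\,\phi_*$ make unambiguous sense and the compositions fed into Lemma~\ref{transducerable category} are legitimate.
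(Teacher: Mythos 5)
Your proposal is correct and follows essentially the same route as the paper: both reduce $(1)\Leftrightarrow(2)$ to Corollary~\ref{big groupoid}, both get $(3)\Rightarrow(1)$ immediately from the definition of a transducer, and both establish the remaining implication by exhibiting each generating folding $\phi$ as the transducer $(D_1,\iota_{D_1},\phi)$ (the paper writes $(D_1,\operatorname{id}_{D_1},f)$ and works with arbitrary UDAF foldings rather than just elementary ones, proving $(1)\Rightarrow(3)$ instead of $(2)\Rightarrow(3)$, but this is a cosmetic difference) and then closing up under composition via Lemma~\ref{transducerable category} and under inversion by swapping the two foldings.
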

\begin{proof}
\((1\iff 2):\) This follows from Corollary~\ref{big groupoid}.

\((1 \Rightarrow 3):\) By Lemma~\ref{transducerable category}, the transducerable maps form a category. This category is a groupoid as inversion can be performed by swapping the two UDAF foldings in a transducer. 
Thus it suffices to show that if \(f:D_1 \to \Wlk(D_2)\) is an UDAF folding, then the map \(f_*:\rt(D_1)\to \rt(D_2)\) is transducerable.
This map is given by the transducer \((D_1, \operatorname{id}_{D_1}, f)\).

\((1\Leftarrow 3):\) By definition, a transducerable map is the composite of two UDAFG isomorphisms.
\end{proof}

\section{Automorphism groups in the UDAFG category}\label{GNS section}
Part of the motivation for establishing the category UDAFG was so that we could capture the outer automorphism groups of \(V\)-like groups. In this section we explain how the category actually does this. 
We assume the reader is reasonably familiar with how the outer automorphism groups of \(V\)-like groups are described in the papers \cite{autgnr, autnv}.
We first recall the required definitions and results from \cite{autgnr}.
\begin{defn}[\(\mathcal{O}_n\) transducers, c.f. \cite{autgnr}]
If \(n\geq 2\) is an integer, then we define \(\mathcal{O}_n\) to be the group of 4-tuples \(T:=(X_n, Q_T, \pi_T, \lambda_T)\) (called \(\mathcal{O}_n\)-transducers), where:
\begin{enumerate}
    \item \(X_n:= \{0, 1, \ldots, n-1\}\) (called the alphabet of \(T\)).
    \item \(Q_T\) is a finite set (called the state set).
    \item \(\pi_T: (X_n \times Q_T) \to Q_T\) is a function (called the transition function).
    \item \(\lambda_T:(X_n \times Q_T) \to X_n^*\) is a function (called the output function). 
    Here \(X_n^*\) is the free monoid on the set \(X_n\).
    \item We extend the domain of \(\pi_T\) to \(X_n^* \times Q_T\) and the domain of \(\lambda_T\) to \((X_n^* \cup X_n^\omega) \times Q_T\) by the equalities
    \[\pi_T(aw, q)=\pi_T (\pi_T(a, q), w)\quad \text{ and }\quad \lambda_T(aw, q)=\lambda_T(a, q)\lambda_T(\pi_T(a, q), w)\]
    where \(a\in X_n\).
    \item There is some \(k>0\) such that for all \(w\in X_n^k\) and \(p,q\in Q_T\) we have \(\pi_T(w, p)=\pi_T(w, q)\) (we call the smallest such \(k\) the \textit{synchonizing length} of \(T\) and denote it by \(k_T\)).
    \item For all \(q\in Q_T\), there is \(w_q\in X_n^{k_T}\) such that \(\pi_T(w_q, q)=q\).
    \item The object \(T\) is minimal in the sense of \cite{GNS2000} (it has complete responce and no distinct states have the same output functions). 
    \item For each \(q\in Q_T\), the map \(\lambda_T(\cdot, q): X_n^\omega \to X_n^\omega\) is injective with clopen image.
    \item The transducer \(T\) is invertible.
\end{enumerate}
Here we identify two  \(\mathcal{O}_n\)-transducers if one can be obtained from the other by relabelling the states (as this group is countable, one can define this group more formally by choosing any infinite set and insisting on only allowing states from that set).
\end{defn}
\begin{theorem}[\(\Aut(G_{n, n-1})\), c.f. \cite{autgnr}]
If \(n\geq 2\) is an integer, then the groups \(\Out(G_{n, n-1})\) and \(\mathcal{O}_n\) are isomorphic.
\end{theorem}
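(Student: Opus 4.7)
The plan is to present $\Out(G_{n,n-1})$ in two stages: first realise every automorphism as a homeomorphism of Cantor space, then show every such homeomorphism is computed by a transducer in $\mathcal{O}_n$. For the first stage, I would exploit the fact that $G_{n,n-1}$ acts faithfully on $X_n^\omega$ with enough small-support elements and dense orbits that a Rubin-style reconstruction theorem applies. This produces, for every $\alpha\in\Aut(G_{n,n-1})$, a unique $h_\alpha\in\operatorname{Homeo}(X_n^\omega)$ conjugating $G_{n,n-1}$ to itself and implementing $\alpha$; it embeds $\Aut(G_{n,n-1})$ as the normaliser of $G_{n,n-1}$ inside $\operatorname{Homeo}(X_n^\omega)$. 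The inner automorphisms are exactly the copy of $G_{n,n-1}$ itself, so $\Out(G_{n,n-1})$ is identified with the quotient of this normaliser by $G_{n,n-1}$.

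For the second stage I would extract a transducer from each normaliser $h$. The crucial observation is that $h$ must be \emph{eventually synchronous}: because $h$ conjugates prefix-replacement maps to prefix-replacement maps, there exists a uniform $k$ such that the first letter of $h(w)$ depends on only the first $k$ letters of $w$, modulo a finite-state memory recording the unresolved portion of the output. Taking states to be equivalence classes of finite prefixes under the relation ``the residual homeomorphism on the corresponding cone is the same'' yields a finite state set $Q$, a transition function $\pi$, and an output function $\lambda$. That the resulting transducer lies in $\mathcal{O}_n$ -- i.e.\ satisfies synchronicity, minimality, complete response and invertibility -- follows by combining standard reductions for synchronous Mealy automata with the fact that $h^{-1}$ is also a normaliser and hence also produces a transducer.

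Finally I would check that inner automorphisms correspond to the identity class in $\mathcal{O}_n$. Elements of $G_{n,n-1}$ are precisely the homeomorphisms given by finite tree-pair data, i.e.\ by initial transducers whose accessible core beyond the initial state is a single-state identity. Passing from $\Aut$ to $\Out$ therefore corresponds exactly to forgetting the initial state of the extracted transducer and keeping only its strongly connected non-initial core, which is what an $\mathcal{O}_n$-transducer is. Composition of transducers computes composition of the induced homeomorphisms, so the resulting map $\Out(G_{n,n-1})\to\mathcal{O}_n$ is a group homomorphism; distinct minimal $\mathcal{O}_n$-transducers yield distinct normaliser-cosets by the uniqueness in Step 1, and surjectivity is witnessed by the homeomorphism read off directly from any given $\mathcal{O}_n$-transducer.

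The main obstacle is the transducer extraction in the middle step: proving eventual synchronicity and finiteness of the state set for an arbitrary normaliser $h$ of $G_{n,n-1}$. Both should follow from a careful combinatorial argument using small-support commutators in $G_{n,n-1}$ to force $h$ to respect a uniformly bounded cellular structure; once that is in hand, the remaining $\mathcal{O}_n$ axioms are routine Mealy-machine manipulations.
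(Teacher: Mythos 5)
First, a point of comparison: the paper does not prove this theorem at all. It is stated with ``c.f.~\cite{autgnr}'' and imported as a known result, so there is no in-paper argument to measure your proposal against. Your outline does follow the same strategy as the cited source: Rubin's reconstruction theorem to identify \(\Aut(G_{n,n-1})\) with the normaliser of \(G_{n,n-1}\) in the homeomorphism group of the Cantor space on which it acts, extraction of a synchronizing transducer from each normalising homeomorphism, and identification of the passage to \(\Out\) with forgetting the initial state and keeping the core.

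As a proof, however, your proposal has a genuine gap exactly where you flag it, and that gap is not a loose end but the entire technical content of the theorem. The claim that every normalising homeomorphism \(h\) is ``eventually synchronous'' with only \emph{finitely many} residual local maps --- i.e.\ that \(h\) is computed by a finite-state transducer satisfying condition (6) of the definition of \(\mathcal{O}_n\) --- does not follow formally from the fact that \(h\) conjugates prefix-replacement maps to prefix-replacement maps; establishing it occupies the bulk of \cite{autgnr}, and ``should follow from a careful combinatorial argument using small-support commutators'' is a statement of intent rather than an argument. Without it you have neither finiteness of \(Q_T\) nor the synchronizing length, so no element of \(\mathcal{O}_n\) has been produced. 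There is also a quieter gap in your surjectivity step: obtaining all of \(\mathcal{O}_n\) is special to \(r=n-1\). For general \(r\) the same construction yields only a proper subgroup of \(\mathcal{O}_n\), because converting a non-initial transducer back into a homeomorphism requires attaching an initial portion compatible with the \(r\)-rooted Cantor space, which is not always possible; your sketch reads off ``the homeomorphism'' from an arbitrary \(\mathcal{O}_n\)-transducer without explaining why this always succeeds precisely when \(r=n-1\).
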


The transducers above can be naturally converted into UDAF transducers as follows.
\begin{defn}[Converting transducers]\label{transducer conversion definition}
If \(T\) is an \(\mathcal{O}_n\) transducer then we define \(UDAF(T):= (D, f, g)\) as follows.
\begin{enumerate}
    \item \(D\) is the digraph with vertex set \(Q_T\) and edge set \(\makeset{(q, x, \pi_T(x, q))}{\(x\in X_n\)}\). 
    Here \((q, x, \pi_T(x,q))\) is an edge from \(q\) to \(\pi_T(x, q)\).
    \item \(R_n\) is the digraph with one vertex and edge set \(X_n\).
    \item \(f:D \to R_n\) is the digraph homomorphism with \((q, x, \pi_T(x, q))f_E= x\) for each \((q, x, \pi_T(x, q))\in E_D\).
    \item \(g:D \to \Wlk(R_n)\) is defined by \((q, x, \pi_T(x, q))g_E = \lambda_T(x, q)\) (we think of \(\lambda_T(x, q)\) as a walk through \(R_n\) in the natural fashion.)
\end{enumerate}
It follows from condition (6) of being an \(\mathcal{O}_n\) transducer that \(f\) is a folding.
Hence we have an action of the semigroup \(\mathcal{O}_n\) on the set \(\rt(R_n)\). 
Thus by condition (10) of being an \(\mathcal{O}_n\)-transducer, the group acts by permutations and so the map \(g\) above must also be an UDAF folding (but not necessarily a folding).
In particular \(UDAF(T)\) is always an UDAF transducer.
\end{defn}

\begin{defn}
If \(D\) is an UDAF digraph, then we denote the automorphism group of the object \((\Wlk(D), D)\) in the category UDAFG by \[\Aut_{UDAFG}(\Wlk(D), D).\]
\end{defn}
The following fact is well-known but the author is unaware if this fact appears anywhere is the literature. 
Thus we provide our own proof in the appendix. See Theorem~\ref{injectivity}.

\begin{remark}\label{appendix reference}
The action of \(\mathcal{O}_n\) on \(\rt(R_n)\) given in Definition~\ref{transducer conversion definition} is faithful. See Proposition~\ref{injectivity}.
\end{remark}

\begin{corollary}
If \(n\geq 2\) is an integer, then the group \(\Out(G_{n, n-1})\) embeds in the group \(\Aut_{UDAFG}(\rt(R_n), R_n)\).
\end{corollary}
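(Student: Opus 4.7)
The plan is to construct an explicit homomorphism $\Phi: \mathcal{O}_n \to \Aut_{UDAFG}(\rt(R_n), R_n)$, show that it is injective, and then precompose with the isomorphism $\Out(G_{n,n-1}) \cong \mathcal{O}_n$ cited above. Given $T \in \mathcal{O}_n$, Definition~\ref{transducer conversion definition} produces an UDAF transducer $UDAF(T) = (D, f, g)$, where $f: D \to R_n$ is a folding and $g: D \to \Wlk(R_n)$ is an UDAF folding. By Theorem~\ref{udaf isomorphisms theorem}, the transducerable bijection ${f_*}^{-1} g_* : \rt(R_n) \to \rt(R_n)$ is an UDAFG automorphism, and I define $\Phi(T) := {f_*}^{-1} g_*$.

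The first step is to verify that $\Phi$ is a group homomorphism. Given $T_1, T_2 \in \mathcal{O}_n$, write $UDAF(T_i) = (D_i, f_i, g_i)$. To express $\Phi(T_1) \Phi(T_2) = {f_1}_*^{-1} {g_1}_* {f_2}_*^{-1} {g_2}_*$ as a single transducer, I use the construction in the proof of Lemma~\ref{transducerable category}: replace $D_1$ and $D_2$ by a common past-future refinement $P$ equipped with UDAF foldings $h_1: P \to \Wlk(D_1)$ and $h_2: P \to \Wlk(D_2)$ for which $(h_1 g_1)_* = (h_2 f_2)_*$, yielding the composite transducer $(P, h_1 f_1, h_2 g_2)$. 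I then compare this with the $\mathcal{O}_n$-product $T_1 T_2$: since the latter is determined by the output rule $\lambda_{T_1 T_2}(x, (q_1, q_2)) = \lambda_{T_2}(\lambda_{T_1}(x, q_1), q_2)$ together with the corresponding transition rule, the state digraph of $T_1 T_2$ (with enough past-future information to make this well-defined) is exactly the relevant refinement $P$, and its $f$- and $g$-maps agree with $h_1 f_1$ and $h_2 g_2$ respectively. This gives $\Phi(T_1 T_2) = \Phi(T_1) \Phi(T_2)$.

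The second step is injectivity, which is exactly the content of Remark~\ref{appendix reference} / Proposition~\ref{injectivity}: the action of $\mathcal{O}_n$ on $\rt(R_n)$ induced by $\Phi$ is faithful. Hence $\Phi$ is injective. Finally, composing $\Phi$ with any isomorphism $\Out(G_{n,n-1}) \to \mathcal{O}_n$ provided by the cited theorem gives the desired embedding $\Out(G_{n,n-1}) \hookrightarrow \Aut_{UDAFG}(\rt(R_n), R_n)$.

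The main obstacle is the homomorphism verification in step one. The subtlety is that $\mathcal{O}_n$-transducers compose by a product-of-states construction adapted to their synchronizing behaviour, while UDAF transducers compose via past-future refinement as in Lemma~\ref{transducerable category}; aligning these two prescriptions up to the natural identification of routes requires some careful bookkeeping of synchronizing lengths and of the identification of $X_n^*$-words with finite walks in $R_n$. Once this identification is made, compatibility of the outputs is essentially immediate from how concatenation interacts with the forwards-image description in Definition~\ref{forwards/backwards image}, and the other defining properties of $\mathcal{O}_n$-transducers (minimality, invertibility, synchronization) ensure everything is well-defined at the level of UDAF foldings.
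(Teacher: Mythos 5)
Your proposal is correct and follows essentially the same route the paper intends: the corollary is left implicit there, resting on the conversion \(T\mapsto UDAF(T)=(D,f,g)\) of Definition~\ref{transducer conversion definition} (which the paper asserts yields an action of \(\mathcal{O}_n\) on \(\rt(R_n)\) by transducerable, hence UDAFG, automorphisms) together with faithfulness from Proposition~\ref{injectivity} and the isomorphism \(\Out(G_{n,n-1})\cong\mathcal{O}_n\). The only difference is that you spell out the homomorphism verification via the past-future refinement of Lemma~\ref{transducerable category}, a step the paper passes over silently; your sketch of it is sound.
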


\begin{theorem}\label{autgnr theorem}
If \(n\geq 2\) is an integer, then 
\[\Out(G_{n, n-1})\cong \Aut_{UDAFG}(\rt(R_n), R_n).\]
\end{theorem}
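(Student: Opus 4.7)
The plan is to upgrade the embedding $\mathcal{O}_n \hookrightarrow \Aut_{UDAFG}(\rt(R_n), R_n)$ from the preceding corollary to a surjection; combined with the faithfulness from Remark~\ref{appendix reference} this yields the desired isomorphism. Concretely, given $\phi \in \Aut_{UDAFG}(\rt(R_n), R_n)$, the strategy is to represent $\phi$ by a UDAF transducer of a particularly rigid ``standard'' form and then reverse Definition~\ref{transducer conversion definition} to extract an $\mathcal{O}_n$-transducer mapping to $\phi$.

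First I would invoke Theorem~\ref{udaf isomorphisms theorem} to write $\phi = \alpha_*^{-1}\beta_*$ for UDAF foldings $\alpha, \beta: T \to \Wlk(R_n)$ out of some UDAF digraph $T$. Applying Lemma~\ref{big lem}(1) to $\alpha$ would provide a past-future digraph $P = \PF(R_n, m, N)$ of $R_n$, its standard folding $h: P \to R_n$, and an UDAF folding $u: P \to \Wlk(T)$ with $h_* = u_*\alpha_*$. Composing $u$ with the functorial extension of $\beta$ to $\Wlk(T)$ and then flattening produces an UDAF folding $\gamma: P \to \Wlk(R_n)$ with $\phi = h_*^{-1}\gamma_*$. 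The crucial feature is that $h$ is now a genuine digraph folding onto $R_n$, so an $\mathcal{O}_n$-style deterministic transition function can be read off from $h$ directly. I would define $T_\phi$ with state set $V_P$, transition $\pi_{T_\phi}(x, q) = (e_{q,x})t_P$ where $e_{q,x}$ is the unique edge of $P$ from $q$ with $(e_{q,x})h_E = x \in X_n$, and output $\lambda_{T_\phi}(x, q) = (e_{q,x})\gamma_E \in X_n^*$.

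Among the defining conditions of $\mathcal{O}_n$, (1)--(5) are formal; synchronization (6) is immediate from the past-future structure, since after reading $N-m+1$ symbols every state is determined by the input word alone; and invertibility (9) follows by running the entire construction on $\phi^{-1}$. The clopen half of condition (10) is precisely Lemma~\ref{big lem}(2) applied to $\fim_\gamma(q)$. Conditions (7) (existence of a return word at every state) and (8) (minimality and complete response) will be handled by standard pruning and minimisation: I would first discard vertices of $P$ that support no bi-infinite walk (every remaining state then lies on a cycle, because $R_n$ is strongly connected and hence so is $P$), and then collapse states with identical output functions via the procedure of \cite{GNS2000}. These are route-preserving operations on the UDAF side, so they do not alter the induced map $\phi$.

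The main obstacle will be the injectivity half of (10): if two distinct forward walks $w_1, w_2$ from $q \in V_P$ were sent by $\gamma$ to the same element of $X_n^\omega$, I would extend each by a common irrational infinite backward tail at $q$ (available because $R_n$ is a UDAF digraph, in the sense of Definition~\ref{udaf digraph defn}), and then invoke the injectivity clause from the final paragraph of the proof of Lemma~\ref{big lem}, after first replacing $\gamma$ by an actual digraph homomorphism via pushes exactly as in that proof; the resulting contradiction forces $w_1 = w_2$. Once $T_\phi \in \mathcal{O}_n$ is established, one checks directly that $\mathrm{UDAF}(T_\phi) = (P, h, \gamma)$ up to isomorphism, and this transducer induces $\phi$ on $\rt(R_n)$, so $T_\phi$ is the required preimage of $\phi$. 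This establishes surjectivity and completes the proof.
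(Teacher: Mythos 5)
Your proposal is correct and follows essentially the same route as the paper: both use Lemma~\ref{big lem} to replace an arbitrary UDAF transducer representing \(\phi\) by one whose domain is a past-future digraph of \(R_n\) with the standard folding, read off the transition and output functions from that data to build a candidate \(\mathcal{O}_n\)-transducer, and then minimise. Your write-up supplies somewhat more detail on verifying the individual \(\mathcal{O}_n\) axioms (and swaps the labels of conditions (9) and (10)), but the argument is the same.
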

\begin{proof}
It suffices to show that if \((T, f, g)\) is an UDAF transducer inducing an UDAFG automorphism of \((\rt(R_n), R_n)\), then there is an \(\mathcal{O}_n\)-transducer \(T'\) which induces the same automorphism.
By Lemma~\ref{big lem}, we may assume without loss of generality that \(T\) is past-future digraph for \(R_n\) and \(f: T\to R_n\) is the standard UDAF folding. Here we view all walks in the digraph \(R_T\) as strings in the alphabet \(X_n\) (this includes the vertices and edges of \(T\)).

Let \(T':= (X_n, V_T, \pi_T, \lambda_T)\) where:
\begin{enumerate}
    \item \(V_T\) is the set of vertices of \(T\).
    \item \(\pi_T: X_n \times V_T  \to V_T\) is defined by assigning \(\pi_T(x, w)\) the unique vertex in \(V_T\) which is a suffix of the string \(wx\).
    \item \(\lambda_T: X_n \times V_T  \to V_T\) is defined by \(\lambda_T(x, v)= (vx)g_E\). 
\end{enumerate}

The transducer \(T'\) might not be minimal (so might not be an \(\mathcal{O}_n\)-transducer) however by construction it induces the same UDAFG automorphism of \((\rt(R_n), R_n)\) as \((T, f, g)\). It also satisfies all of the other conditions of being a \(\mathcal{O}_n\)-transducer (the 9th condition follows from Lemma~\ref{big lem}).
Thus if we minimise \(T'\) using the process described in \cite{autgnr}, then we obtain an \(\mathcal{O}_n\)-transducer which has the required action on \((\rt(R_n), R_n)\). 
\end{proof}

We next show that this extends to the groups \(nV\).
\begin{lemma}\label{wreath lemma}
Suppose that \(D\) is a strongly connected UDAF digraph and \(\Wlk(D)\neq \varnothing\).
If \(nD\) denotes the UDAF digraph consisting of \(n\) disjoint copies of \(D\), then 
\[\Aut_{UDAF}(\Wlk(nD), nD) \cong \Aut_{UDAF}(\Wlk(D), D) \wr S_n\]
where \(S_n\) denotes the symmetric group on \(n\) points with its usual action.
\end{lemma}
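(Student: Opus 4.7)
The plan is to construct maps in both directions and check they are mutually inverse. The ``easy'' direction is to build a homomorphism $\Psi: \Aut_{UDAFG}(\Wlk(D),D) \wr S_n \to \Aut_{UDAFG}(\Wlk(nD),nD)$: any permutation $\tau \in S_n$ defines a digraph automorphism of $nD$ and hence (via the identity of each copy) a UDAF folding; any tuple $(\phi_1, \ldots, \phi_n)$ of UDAFG automorphisms of $(\Wlk(D),D)$ acts diagonally, by applying $\phi_i$ to routes living in the $i$-th copy. That these commute with each other correctly to give a wreath product structure is routine (they are all defined by transducers whose support lies in a single component of $nD$), and that $\Psi$ is injective follows because routes in distinct copies of $D$ are distinguishable in $\rt(nD)$, so a nontrivial tuple or nontrivial permutation yields a nontrivial action.

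For surjectivity, let $\phi \in \Aut_{UDAFG}(\Wlk(nD),nD)$ and apply Lemma~\ref{big lem} to realise $\phi = f_*^{-1} g_*$ with $(T, f, g)$ an UDAF transducer in which $f:T \to nD$ is a standard past--future digraph folding. Then $T = \PF(nD, m, k)$ for some $m \leq 0 \leq k$, and since walks through $nD$ never cross between copies, $T$ decomposes as a disjoint union $T = \bigsqcup_{i=1}^{n} T_i$ where $T_i = \PF(D_i, m, k)$ and $f\restriction_{T_i}$ is the standard folding onto the $i$-th copy $D_i$ of $D$. Because $D$ is strongly connected (and $\Wlk(D) \neq \varnothing$ gives a cycle, hence bi-infinite walks), each $T_i$ is strongly connected and non-empty, and so is its route set.

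Now the digraph homomorphism $g:T \to \Wlk(nD)$ sends each strongly connected $T_i$ into a single connected component of $\Wlk(nD)$, which must be $\Wlk(D_{\pi(i)})$ for some index $\pi(i)$. The bijectivity of $g_*:\rt(T) \to \rt(nD)$, together with $g_*(\rt(T_i)) \subseteq \rt(D_{\pi(i)})$ and the non-emptiness of every $\rt(D_j)$, forces the map $\pi:\{1,\ldots,n\} \to \{1,\ldots,n\}$ to be a permutation, and moreover forces $g_*\restriction_{\rt(T_i)}$ to be a bijection onto $\rt(D_{\pi(i)})$. Hence $g\restriction_{T_i}: T_i \to \Wlk(D_{\pi(i)})$ is itself an UDAF folding, and $(T_i, f\restriction_{T_i}, g\restriction_{T_i})$ is an UDAF transducer defining a bijection $\phi_i: \rt(D_i) \to \rt(D_{\pi(i)})$ which is the restriction of $\phi$. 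After fixing once-and-for-all isomorphisms $D_i \cong D$, each $\phi_i$ yields an element of $\Aut_{UDAFG}(\Wlk(D), D)$, and the tuple $(\phi_1, \ldots, \phi_n; \pi)$ maps to $\phi$ under $\Psi$.

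The step most prone to subtlety is verifying the permutation argument in the third paragraph, specifically checking that when the past--future parameters $m,k$ are chosen, no $T_i$ has empty route set (using strong connectedness of $D$ together with $\Wlk(D)\neq\varnothing$) and that the global bijectivity of $g_*$ really does upgrade the component-wise injections $\rt(T_i)\hookrightarrow \rt(D_{\pi(i)})$ to surjections. Everything else is an unwinding of the definitions of a transducer, of the wreath product action, and of the disjoint-union structure of $\rt(nD)$.
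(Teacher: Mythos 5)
Your proposal is correct and follows essentially the same route as the paper: reduce via Lemma~\ref{big lem} to a transducer $(T,f,g)$ with $T$ a past--future digraph of $nD$, observe that $T$ splits as a disjoint union of $n$ strongly connected past--future digraphs of $D$ each of which $g$ must send into a single copy of $D$, and use surjectivity of $g_*$ together with $\Wlk(D)\neq\varnothing$ to extract the permutation and the $n$ component UDAF foldings. The only difference is that you spell out the inverse homomorphism from the wreath product and its injectivity explicitly, which the paper leaves implicit.
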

\begin{proof}
By Lemma~\ref{big lem} and Theorem~\ref{udaf isomorphisms theorem}, each UDAFG isomorphism of \((\Wlk(nD), nD)\) can be induced by an UDAF transducer \((T, f, g)\) where \(T\) is a past-future digraph of \(nD\) and \(f\) is the standard folding.
Note that a past-future digraph for \(nD\) is isomorphic to the disjoint union of \(n\) past-future digraphs for \(D\) (each of which is strongly connected).
The digraph homomorphism \(g:T\to \Wlk(nD)\) must therefore be the disjoint union \(n\) digraph homomorphisms from past-future digraphs of \(D\) to copies of \(D\). 
Each of copy of \(D\) is mapped to by precisely one of these digraph homomorphisms or otherwise \(g_*\) would not be surjective (\(\Wlk(D)\neq \varnothing\)).
It follows that \(g\) is the disjoint union of \(n\) UDAF folding from past-future digraphs of \(D\) to \(D\).
Moreover any such choice of \(g\) will be a folding from \(T\) to \(\Wlk(nD)\). 
Thus the wreath product follows.
\end{proof}

\begin{theorem}
If \(n\geq 2\) is an integer, then 
\[\Out(nV)\cong \Aut_{UDAFG}(\rt(nR_2), nR_2).\]
\end{theorem}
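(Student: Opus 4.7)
The plan is to chain together Lemma~\ref{wreath lemma}, Theorem~\ref{autgnr theorem} (with $n=2$), and a known structural result about $\Out(nV)$ from the literature. First I would verify the hypotheses of Lemma~\ref{wreath lemma} for $D = R_2$: since $R_2$ has a single vertex with two loops, it is trivially strongly connected, and $\Wlk(R_2)$ is nonempty (in fact it is the full two-sided shift on two symbols). Applying the lemma gives
\[
\Aut_{UDAFG}(\rt(nR_2), nR_2) \;\cong\; \Aut_{UDAFG}(\rt(R_2), R_2) \wr S_n.
\]

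Next I would specialize Theorem~\ref{autgnr theorem} to the case where the underlying $n$-leaf rose is $R_2$, obtaining
\[
\Aut_{UDAFG}(\rt(R_2), R_2) \;\cong\; \Out(G_{2,1}) \;=\; \Out(V),
\]
since Thompson's group $V$ is precisely $G_{2,1}$. Substituting this into the previous displayed isomorphism yields
\[
\Aut_{UDAFG}(\rt(nR_2), nR_2) \;\cong\; \Out(V) \wr S_n.
\]

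To close the argument, I would invoke the description of $\Out(nV)$ from \cite{autnv}, namely $\Out(nV) \cong \Out(V) \wr S_n$, where the symmetric group factor permutes the $n$ coordinate directions of $nV$. Combining gives the desired isomorphism. The step requiring the most care is ensuring the two wreath products agree as groups, not merely abstractly: one must check that the $S_n$ action produced by Lemma~\ref{wreath lemma} (which permutes the $n$ disjoint copies of $R_2$ in $nR_2$, and thus permutes the $n$ factors of the base group) matches the $S_n$ action on $\Out(V)^n$ coming from the permutation of coordinates of $nV$ in \cite{autnv}. Since in both cases the action is the obvious permutation of the $n$ indistinguishable factors, this compatibility is transparent, and the theorem follows.
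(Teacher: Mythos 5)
Your proposal is correct and follows exactly the paper's own argument: the paper's proof is a one-line combination of Theorem~\ref{autgnr theorem}, Lemma~\ref{wreath lemma}, and the fact that \(\Out(nV)\cong \Out(V)\wr S_n\) from \cite{autnv}. Your additional checks (verifying the hypotheses of the wreath lemma for \(R_2\) and confirming that the two \(S_n\)-actions agree) are sensible elaborations of the same route.
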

\begin{proof}
This is immediate from Theorem~\ref{autgnr theorem}, Lemma~\ref{wreath lemma} and the fact that \(\Out(nV)\cong \Out(V)\wr S_n\) (see \cite{autnv}).
\end{proof}

\part{Beams and multisets}\label{beams part}
This section is dedicated to the concept of beams and a particular equivalence relation on these beams. For background on indexed rays and beams see Section 7.5 of \cite{lind2021introduction}.
A beam is a collection of routes through an UDAF digraph that can be described as all the routes which begin with one ``infinite backwards route" from a specified finite collection.
Thus a beam is a union of finitely many sets which can each be described as the collection of all routes which begin with a single specified ``infinite backwards route" (such sets are called rays).
We show that UDAFG isomorphisms map beams to beams.
We also define a natural equivalence relation on the beams by considering two rays to be the same if their defining infinite backwards routes end at the same vertex (the end vertex cannot always be determined from the ray alone but it almost can).
If \(S\) is a set, then we consider a multiset of elements of \(S\) to be a function \(M:S\to \N\), where \(\N\) includes \(0\), and \((a)M\) is the number of copies of \(a\) in \(M\) (we will only consider finite multisets here so \(\N\) is sufficiently large).
The reader may find it helpful to think of \(M\) as a count or a weight system at times.
As \(\N^S\subseteq \Z^S\) we sometimes add/subtract multisets where this is useful.

\begin{defn}[Rays and beams]
If \(D\) is an UDAF digraph, \(w:\Z_{[-\infty, \infty]} \to D\) is such that \(w\restriction_{\Z_{[-\infty, 0]}}\) is irrational, and \(n\in \Z\) then we define
\[R(w, n):=\makeset{h\in \operatorname{Hom}(\Z_{[-\infty, \infty]}, D)}{\(h\restriction_{\Z_{[-\infty, n]}}= w\restriction_{\Z_{[-\infty, n]}}\)}\]
(note that this set is independent of \(w|_{\Z_{[n, \infty]}}\)).

In this case we say that \(n\) is a central index for \(R(w, n)\) and \((n)w_V\) is a central vertex for \(R(w, n)\).
The sets of this form are called \textit{rays}. 
Note that a ray always has a unique central index and vertex unless \(D\) has vertices of outdegree \(1\) (a ray is just a set of walks so it may be difficult to discern the central index \(n\) from the walks).

We then define \(R^u(w, n):= \makeset{\langle \sigma\rangle \circ h}{\(h\in R(w, n)\)}\), these are called \textit{unindexed rays}. 
Similarly in this case we say that \((n)w_V\) is a central vertex for \(R^u(w, n)\) (the notion of central index is no longer meaningful). We call a finite union of rays a beam, and a finite union of unindexed rays an unindexed beam. We define \(B^u(D)\) to be the set of unindexed beams of \(D\).
\end{defn}

The insistence that \(p\) be irrational in the previous definition is so that we may easily ``split" a ray into subrays.
For example, if \(R_2\) is the 2-leaf rose with edges \(\{a, b\}\) and \[w=\ldots aba \]
is some infinite backwards walk, then we would like the rays corresponding to \(w\) to be the disjoint union of the rays corresponding to 
\[wa=\ldots abaa \quad \text{ and }\quad wb=\ldots abab.\]
In particular we would not like there to be a single route which has both of \(wa\) and  \(wb\) as infinite tails. 
The irrationality of \(w\) guarantees this. Hence we have the following remark.
\begin{remark}
If \(D\) is an UDAF digraph and \(B\) is an unindexed beam of \(D\), then \(B\) can be expressed as a \textbf{disjoint} union of unindexed rays.
\end{remark}
\begin{remark}
It follows from Lemma~\ref{big groupoid} that UDAFG isomorphisms map unindexed beams to unindexed beams (as the elementary UDAF foldings and their inverses do).
\end{remark}
\begin{defn}
If \(D\) is an UDAF digraph, then we define the \textit{core} \(D^\circ\) of \(D\) to be the subdigraph of \(D\) consisting of those edges and vertices with are involved in bi-infinite walks through \(D\).
\end{defn}
\begin{defn}[Counts and equivalence relations]\label{counts and equivalence relations defn}
Let \(B\) be an unindexed beam of an UDAF digraph \(D\), then we can write \(B\) as a disjoint union of finitely many unindexed rays \(R_0, R_1 \ldots R_{k-1}\). 
If \(M:V_{D^\circ}\to \N\) is a multiset of vertices of \(V_{D^\circ}\), and there is an indexed collection of vertices \((v_i)_{i<k}\) with each \(v_i\) a central vertex of \(R_i\), such that for all \(v\in V_{D^\circ}\) we have
\[(v)M=\left|\makeset{i<k}{\(v_i=v\)}\right|\]
then we say that \textit{\(M\) is a vertex count for \(B\)}. 
A typical beam will have infinitely many vertex counts, as there are typically many ways of writing a beam as a union of rays, and a ray may have multiple choices of central vertex.

We define a relation \( \sim_{D}\) on \(\N^{V_{D^\circ}}\) to be the equivalence relation generated by the pairs 
\[\makeset{(M_1, M_2)}{there is an unindexed beam \(B\) such that \(M_1\)\\ and
\(M_2\) are both vertex counters for \(B\)}.\]
We then define \(\N^{V_{D^\circ}}/ \sim_{D}:= \makeset{[M]_{\sim_D}}{\(M\in V_{D^\circ}\)}\).
We define a map \(C_D:B^u(D)\to \N^{V_{D^\circ}}/ \sim_{D}\) by 
\[(B)C_D=[M]_{ \sim_{D}}\]
where \(M\) is any vertex counter for \(B\) (by the definition of \(\sim_{D}\) this map is well defined).
We define \(\sim_{D, B}\) to be the kernel of \(C_D\), that is the equivalence relation
\[\makeset{(B_1, B_2)\in B^u(D)}{\((B_1)C_D=(B_2)C_D\)}.\]
\end{defn}
\begin{defn}[Refinements and the refining monoid]
If \(D\) is an UDAF digraph and \(A, B\in \N^{V_{D^\circ}}\), then we say that \(B\) is an \textit{elementary refinement} of \(A\) if it can be obtained from \(A\) via the following process: 

Choose a vertex \(v\) of \(V_{D^\circ}\) with \((v)A>0\) and let \(P\) be a collection finite walks in \(D^\circ\) starting at \(v\) such that every infinite forwards walk from \(v\) begins with precisely one element of \(P\).
Let \(S\) be the multiset of vertices which are the end vertices of elements of \(P\).
Let \(R:V_{D^\circ}\to \Z\) be the same as \(S\) except with \((v)R= (v)S-1\). Finally let \(B= A+R\).

We call such an \(R\) an \textit{elementary refiner}, and the monoid generated by such maps \(R:V_{D^\circ} \to \Z\), the \textit{refining monoid} \(\operatorname{Refine}(D)\). Observe that \(\operatorname{Refine}(D)\) is a submonoid of \(\Z^{V_{D^\circ}}\).

We say that \(B\) is an \textit{refinement} of \(A\) if there is a sequence of multisets \(A=M_0, M_1, \ldots M_k = B\) such that each \(M_{i+1}\) is obtained from \(M_i\) by an \textit{elementary refinement}.
\end{defn}
\begin{defn}
If \(D\) is an UDAF digraph, \(M\in \N^{V_{D^\circ}}\), \(v\in V_{D^\circ}\), \((v)M>0\), and \(n\in \N\), then we define the \textit{standard} \((v, n)\)-refinement of \(M\) to be the elementary refinement using the vertex \(v\) and the anti-chain of all walks (in \(D^\circ\)) starting at \(v\) of length \(n\). 
More generally we call these \textit{standard} refinements.
Note that a standard \((v, 0)\)-refinement of \(M\) is always \(M\).
\end{defn}
\begin{remark}\label{easy refinements}
Let \(D\) be an UDAF digraph and \(M_1\in \N^{V_{D^\circ}}\).

Any elementary refinement \(M_2\) of a multiset \(M_1\) can be further refined to obtain a standard refinement \(M_3\) of \(M\) using the same vertex used to refine \(M_1\) to \(M_2\).

Moreover all refinements \(M_2\) of \(M_1\) can be further refined to obtain a refinement \(M_3\) of \(M_1\) which could have been obtained by performing a standard refinement from each vertex in \(M_1\).

\end{remark}
\begin{lemma}\label{refinement gives refined partition}
Let \(D\) be an UDAF digraph. If \(M_1, M_2 \in \N^{V_{D^\circ}}\) and one is a refinement of the other, then there is \(B\in B^u(D)\) such that both \(M_1\) and \(M_2\) are vertex counters for \(B\). In particular \(M_1\sim_{D} M_2\).
\end{lemma}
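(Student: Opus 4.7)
The plan is to reduce to a single elementary refinement and show that performing an elementary refinement on a vertex counter of a beam $B$ corresponds to partitioning one of the constituent unindexed rays of $B$ into smaller sub-rays. By symmetry I may assume $M_2$ is a refinement of $M_1$, via a sequence $M_1 = N_0, N_1, \ldots, N_k = M_2$ of elementary refinements. Induction on $k$ reduces the problem to the following key claim: if $B$ has vertex counter $N$ and $N'$ is an elementary refinement of $N$, then $N'$ is also a vertex counter for $B$.

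First I would construct an initial beam $B$ realising $M_1$. For each vertex $u \in V_{D^\circ}$ with $(u)M_1 > 0$, the UDAF hypothesis applied along any bi-infinite walk through $u$ (exactly as in the Claim inside the proof of Lemma~\ref{big lem}) provides uncountably many shift orbits of irrational infinite backwards walks ending at $u$. Two unindexed rays are disjoint precisely when their underlying backwards walks lie in distinct shift orbits, so choosing $\sum_u (u)M_1$ representatives from pairwise distinct orbits and forming the corresponding unindexed rays yields a pairwise disjoint family whose union $B$ has $M_1$ as a vertex counter.

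Next I would prove the key claim. Fix a partition $B = \bigsqcup_{i<k} R^u_i$ with central vertices $v_i$ realising $N$, and suppose $N' = N - \chi_v + S$ is obtained via a vertex $v$ with $(v)N > 0$, an antichain $P$ of finite walks from $v$ hitting every infinite forward walk from $v$, and endpoint multiset $S$. Choose $i_0$ with $v_{i_0} = v$, write $R^u_{i_0} = R^u(w, n)$ where $(n)w_V = v$, and observe the decomposition
\[R^u(w, n) = \bigsqcup_{p \in P} R^u(w \star p, n + \norm{p}).\]
That each element of $R^u(w, n)$ lies in exactly one piece follows by tracking which element of $P$ prefixes the forwards walk it determines; disjointness uses the irrationality of $w\restriction_{\Z_{[-\infty, n]}}$, since an element shared between two pieces would force a nontrivial shift-periodicity of this backwards walk. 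Assigning the endpoints of walks in $P$ as central vertices of the new sub-rays produces a partition of $B$ whose central-vertex multiset is exactly $N - \chi_v + S = N'$.

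Combining these yields a beam $B$ having both $M_1$ and $M_2$ as vertex counters, and $M_1 \sim_D M_2$ then follows immediately from the definition of $\sim_D$. The main obstacle is the disjointness verification in the decomposition of $R^u(w, n)$, which is where irrationality enters essentially; a subsidiary technical point is securing enough shift-distinct irrational backwards walks at each prescribed vertex for the initial construction of $B$, for which the UDAF hypothesis is the key input.
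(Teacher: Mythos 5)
Your proposal is correct and follows essentially the same route as the paper's proof: reduce to a single elementary refinement, realise \(M_1\) by a disjoint family of unindexed rays, and split the ray at the chosen vertex into the sub-rays \(R^u(w\star p, n+\norm{p})\) for \(p\in P\), using irrationality of the backwards tail for disjointness. You are in fact more careful than the paper in constructing the initial beam (the paper simply posits a disjoint family realising \(M_1\)); the only imprecision is your claim that two unindexed rays are disjoint exactly when their backwards walks lie in distinct shift orbits --- the relevant obstruction is one backwards walk being a shifted extension of the other, not orbit-equality --- but this does not affect the argument.
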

\begin{proof}
We can assume without loss of generality that \(M_2\) is an elementary refinement of \(M_1\).
Let \(R_0, R_1, \ldots, R_{n-1}\) be disjoint unindexed rays of \(D\) such that there are central vertices \((v_i)_{i<n}\) of \(R_i\) with \( (v)M_1=\left|\makeset{i<k}{\(v_i=v\)}\right|\)
for all \(v\in V_{D^\circ}\).

Assume without loss of generality that \(M_2\) is a refinement of \(M_1\) using the vertex \(v_0\) and a collection of walks \(P\).
Let \(w:\Z_{[-\infty, \infty]}\to D\) be a bi-infinite walk with \((0)w_V= v_0\) and such that \(R_0= R^u(w, 0)\).
 From the definition of an unindexed ray, it follows that \(w\restriction_{\Z_{[-\infty, 0]}}\) is irrational.
 For each \(p\in P\), let \(w_p:\Z_{[-\infty, \infty]} \to D\) be such that \(w_p\restriction_{\Z_{[-\infty, 0]}} = w\restriction_{\Z_{[-\infty, 0]}}\) and  \(w_p\restriction_{\dom(p)} = p\).

 It follows from the definition of \(P\) that the sets \(R(w_p, \norm{p})\) for \(p\in P\) are a partition of \(R(w, 0)\) and each of these has the end vertex of the corresponding element of \(p\) as a central vertex.
 
It follows that the unindexed rays \(R_1, \ldots, R_{n-1}\) together with \(R^u(w_p, \norm{p})\) for \(p\in P\) are a partition of \(\cup_{i<n} R_i\).
In particular \(M_1\) and \(M_2\) are both vertex counters for this unindexed beam.
\end{proof}

Recall that partition \(P_2\) \textit{refines} another partition \(P_1\) if each element of \(P_1\) is a union of elements of \(P_2\).
\begin{lemma}\label{rays_refined}
Suppose that \(D\) is an UDAF digraph, \(B\) is an unindexed beam of \(D\), and \(P_1, P_2\) are each partitions of \(B\) into finitely many unindexed rays.
There is a partition \(P_3\) of \(B\) into unindexed rays such that \(P_3\) is a refinement of both \(P_1\) and \(P_2\).
\end{lemma}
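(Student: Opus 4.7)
The plan is to reduce this to the following structural claim: \emph{the intersection of any two unindexed rays of $D$ is either empty or itself an unindexed ray.} Granting this, the common refinement is
\[P_3 := \{R \cap S : R \in P_1,\, S \in P_2,\, R \cap S \neq \varnothing\},\]
which is a finite partition of $B$ into unindexed rays and refines both $P_1$ and $P_2$ by construction.

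To prove the intersection claim, write $R_1 = R^u(w_1, n_1)$ and $R_2 = R^u(w_2, n_2)$; by translating the defining walks we may assume $n_1 = n_2 = 0$. Any route in $R_1 \cap R_2$ has a unique representative $h$ with $h\restriction_{\Z_{[-\infty, 0]}} = w_1\restriction_{\Z_{[-\infty, 0]}}$ (unique because $w_1\restriction_{\Z_{[-\infty,0]}}$ is irrational), and there is a unique $k \in \Z$ with $(\sigma^k \circ h)\restriction_{\Z_{[-\infty, 0]}} = w_2\restriction_{\Z_{[-\infty, 0]}}$ (unique for the same reason applied to $w_2$). The key observation is that this $k$ depends only on $(w_1, w_2)$, not on the route: if two routes required shifts $k \neq k'$, then comparing the two conditions would force $w_2$ to satisfy a nontrivial periodicity on its backwards tail, contradicting irrationality of $w_2\restriction_{\Z_{[-\infty, 0]}}$.

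With $k$ fixed, the analysis splits into two cases. If $k \leq 0$, the shift condition becomes a purely combinatorial constraint on $w_1$ and $w_2$ and places no restriction on the forward continuation of $h$, so $R_1 \cap R_2$ is either empty or equal to all of $R_1$. If $k > 0$, backwards consistency is still a condition on $w_1, w_2$ alone, while a specific finite walk $p$ of length $k$ (read off from $w_2\restriction_{\Z_{[-k, 0]}}$) is forced to be the first $k$ forward steps of $h$; when consistency holds, the intersection is exactly the sub-ray $R^u(w_1 \star p, k)$. In either case $R_1 \cap R_2$ is empty or a single unindexed ray, completing the proof.

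I expect the main obstacle to be the careful bookkeeping of indices and shifts, and in particular pinning down the uniqueness of $k$ as a function of the pair $(R_1, R_2)$; the irrationality hypothesis of Definition~\ref{rational walks} is doing all the real work here, since it is precisely what prevents two distinct compatible shifts from coexisting.
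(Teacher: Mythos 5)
Your proposal is correct and follows essentially the same route as the paper: form the common refinement out of pairwise intersections, then show that a non-empty intersection of two unindexed rays is an unindexed ray, with irrationality of the backwards tails doing the work. The paper finishes the intersection claim more directly---from a single route in $R_1\cap R_2$ it extracts a shift $n$ with $w_1\restriction_{\Z_{[-\infty,0]}}=(\sigma^{-n}w_2)\restriction_{\Z_{[-\infty,0]}}$ and concludes $R_2\subseteq R_1$, so the intersection is just one of the two rays---which makes your uniqueness-of-$k$ argument and the $k\le 0$ versus $k>0$ case split unnecessary, though not incorrect.
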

\begin{proof}
Let \(\{R_{i, 1}: i\in \{0, 1, \ldots k-1\}\}\) and \(\{R_{i, 2}: i\in \{0, 1, \ldots l-1\}\}\) be partitions of \(B\) into unindexed rays. It follows that the set \(\{R_{i, 1}\cap R_{j, 2}:i<k, j<l\}\backslash\{\varnothing\}\) is a finite partition of \(B\) refining both of our previous ones. 
It thus suffices to show that a non-empty intersection of two unindexed rays is an unindexed ray.

Let \(R_1, R_2\) be unindexed rays with \(R_1\cap R_2\neq \varnothing\).
Let \(w_1, w_2:\Z_{[-\infty, \infty]}\to D\) be bi-infinite walks with \(R_1=R^u(w_1, 0)\) and \(R_2= (w_2, 0)\).
As \(R_1\) and \(R_2\) are symmetric and \(R_1\cap R_2\neq \varnothing\), we may assume without loss of generality that there is \(n\in \N\) with \(w_1 \restriction_{\Z_{[-\infty, 0]}}= (\sigma^{-n} w_2)\restriction_{\Z_{[-\infty, 0]}}\). 
It follows that \(R_2\subseteq R_1\). Thus \(R_1\cap R_2=R_2\) and is thus an unindexed ray.
\end{proof}
\begin{theorem}\label{congruence def 2}
If \(D\) is an UDAF digraph and \(M_1, M_2\in \N^{V_{D^\circ}}\) then \(M_1  \sim_{D} M_2\) if and only if they have a common refinement.
\end{theorem}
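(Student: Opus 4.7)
The \emph{if} direction is essentially free: given a common refinement $M$ of $M_1$ and $M_2$, two applications of Lemma~\ref{refinement gives refined partition} yield beams for which $M_1,M$ (respectively $M_2,M$) are both vertex counters, so $M_1 \sim_D M \sim_D M_2$.

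For the converse, let $\approx$ denote the relation ``have a common refinement'' on $\N^{V_{D^\circ}}$. The plan is to show $\approx$ is an equivalence relation containing every generating pair of $\sim_D$; since $\sim_D$ is the equivalence relation generated by those pairs, this gives $\sim_D\,\subseteq\,\approx$. Reflexivity and symmetry of $\approx$ are immediate.

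To handle the generators, suppose $M_1$ and $M_2$ are both vertex counters for a common unindexed beam $B$, coming from partitions $P_1,P_2$ of $B$ into unindexed rays with chosen central vertices. By Lemma~\ref{rays_refined} there is a partition $P_3$ of $B$ into unindexed rays refining both $P_1$ and $P_2$. Each ray of $P_1$ with central vertex $v$ is split in $P_3$ into subrays corresponding to an antichain of finite walks from $v$ (irrationality of the defining backward walk lets the antichain be read off unambiguously); selecting the end-vertices of these walks as central vertices for the subrays converts $M_1$ into a vertex counter $M_3^{(1)}$ of $P_3$ by a sequence of elementary refinements. The same construction applied to $P_2$ produces $M_3^{(2)}$ refining $M_2$. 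For each ray in $P_3$, the two constructions may pick different central vertices, but the possible central vertices of a fixed ray form a finite outdegree-$1$ chain in $D^\circ$ (the chain must terminate, as an infinite one would force the chain's starting vertex to have a unique infinite forward walk, contradicting the UDAF condition on $D$). Thus standard refinements along these chains convert both $M_3^{(1)}$ and $M_3^{(2)}$ into one common vertex counter $M_4$ (using the latest central vertex in each chain), giving a common refinement of $M_1$ and $M_2$.

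The main obstacle is transitivity. Suppose $N_{12}$ is a common refinement of $M_1,M_2$ and $N_{23}$ is a common refinement of $M_2,M_3$. It suffices to produce a common refinement of $N_{12}$ and $N_{23}$, since that will also refine $M_1$ and $M_3$. Applying Remark~\ref{easy refinements} to each side, I further refine $N_{12}$ and $N_{23}$ to $N_{12}'$ and $N_{23}'$, each obtained from $M_2$ by performing, for every copy of every vertex $v$ in $M_2$, a standard $(v,n)$-refinement at some depth depending on the copy. Matching the copies of each vertex in the two descriptions by any bijection, define $N^{\star}$ by performing the standard refinement at the pointwise maximum of the two chosen depths. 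Since a standard $(v,n)$-refinement can always be extended to a standard $(v,N)$-refinement for any $N\geq n$ by performing a standard $(u,N-n)$-refinement at each endpoint $u$ of a length-$n$ walk from $v$, the multiset $N^{\star}$ is a refinement of both $N_{12}'$ and $N_{23}'$, hence of $N_{12}$ and $N_{23}$, and hence of $M_1$ and $M_3$.
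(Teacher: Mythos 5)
Your proposal is correct and follows essentially the same route as the paper: the easy direction via Lemma~\ref{refinement gives refined partition}, the generators via the common refined partition of Lemma~\ref{rays_refined}, and transitivity via Remark~\ref{easy refinements} together with a pointwise maximum of standard refinement depths over the copies of vertices in the middle multiset. The only difference is that you explicitly reconcile the possibly different central vertices chosen for each ray of the common refined partition by following the finite outdegree-$1$ chains (which must terminate by the UDAF condition), a point the paper's proof passes over implicitly; this is a welcome extra bit of care rather than a change of method.
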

\begin{proof}
\((\Leftarrow):\) Suppose that \(M_1, M_2\) have a common refinement \(M_3\). Then \(M_1 \sim_{D} M_3  \sim_{D} M_2\) (by Lemma~\ref{refinement gives refined partition}).

\((\Rightarrow):\) Suppose that \(M_1\) and \(M_2\) are both vertex counters for a common beam \(B\), corresponding to partitions \(\{R_{1, i}:i<k\}\) and \(\{R_{2, i}:i<l\}\) of \(B\) into rays. By Lemma~\ref{rays_refined} these two partitions have a common refined partition \(RP\) of \(B\) into rays. 

Let \(i<k\) be arbitrary, and let \(v_i\) be a central vertex for \(R_{1, i}\).
Let \(R\in RP\) be arbitrary such that \(R\subseteq R_{1, i}\).
The irrational tail shared by the elements of \(R\) is an extension of the irrational tail shared by the elements of \(R_{1, i}\) followed by some finite walk \(p_R\) from the vertex \(v_i\) to a central vertex for \(R\). 
As \(RP\) is a partition, for a fixed \(i\), the set \(P_i\) of such finite walks \(p_R\) has the property that every infinite forwards walk from \(v_i\) starts with precisely one of these finite walks \(p_{R}\).
Thus by performing elementary expansions from each of the vertices \(v_i\) using the set of walks \(P_i\), it follows that \(M_3\) is a refinement of \(M_1\).
Similarly, \(M_3\) is a refinement of \(M_2\) and so \(M_1\) and \(M_2\) have a common refinement.

It therefor suffices to show that having a common refinement is an equivalence relation. Having a common refinement is clearly reflexive and symmetric so we show that it is transitive.

First note that the empty (constant \(0\)) multiset is not related to any other so we can safely ignore it. Suppose that \(M_1, M_2, M_3, M_{1, 2}, M_{2, 3}\in \N^{V_{D^\circ}}\), \(M_{1, 2}\) is a common refinement of \(M_1, M_2\), and \(M_{2, 3}\) is a common refinement of \(M_2, M_3\).

By Remark~\ref{easy refinements}, there is \(R_1:M_2 \to \N\) (here we mean to assign each vertex in the multiset \(M_2\) a natural number, possibly assigning different numbers to different copies of a single vertex) such that the refinement \(M_{1,2}'\) of \(M_2\) obtained by performing a standard \((v, (v)R_1)\)-refinement for each \(v\in M_{2}\) is a refinement \(M_{1, 2}\).
Similarly, there is \(R_2:M_2 \to \N\) such that the refinement \(M_{2, 3}'\) of \(M_2\) obtained by performing a standard \((v, (v)R_2)\)-refinement for each \(v\in M_2\) is a refinement \(M_{2, 3}\).
Let \(R_3\) be the coordinate-wise max of \(R_1\) and \(R_2\). 
It follows that the refinement of \(M_2\) obtained by performing a standard \((v, (v)R_3)\)-refinement for each \(v\in M_2\) is a refinement of both \(M_{1, 2}\) and \(M_{2, 3}\) and is thus a common refinement of both \(M_1\) and \(M_3\).
\end{proof}
\begin{corollary}\label{counting_refinement}
If \(D\) is an UDAF digraph, \(M\in \N^{D^\circ}\), \(B\in B^u(D)\) and \((B)C_D=[M]_{ \sim_{D}}\), then \(M\) has a refinement which is a vertex counter for \(B\).
\end{corollary}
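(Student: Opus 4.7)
The plan is to combine Theorem~\ref{congruence def 2} with a sharper reading of Lemma~\ref{refinement gives refined partition}: namely, that a refinement of a vertex counter of \(B\) is again a vertex counter of \(B\) itself, not merely of some beam sharing a common counter.

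First I would pick any vertex counter \(M_B\) of \(B\). By hypothesis \((B)C_D = [M]_{\sim_D}\), so \(M_B \sim_D M\). Applying Theorem~\ref{congruence def 2}, there exists a common refinement \(M'\) of \(M\) and \(M_B\). Since \(M'\) is already a refinement of \(M\), it remains only to show \(M'\) is a vertex counter for \(B\).

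The key step is the following claim: if \(N_1\) is a vertex counter for \(B\) and \(N_2\) is an elementary refinement of \(N_1\), then \(N_2\) is also a vertex counter for \(B\). To see this, fix a disjoint partition \(B = R_0 \sqcup R_1 \sqcup \cdots \sqcup R_{n-1}\) into unindexed rays with central vertices \((v_i)_{i<n}\) realising \(N_1\); assume without loss of generality that the elementary refinement uses the vertex \(v_0\) and a finite set \(P\) of walks from \(v_0\) such that every infinite forwards walk from \(v_0\) begins with a unique element of \(P\). Following the construction in the proof of Lemma~\ref{refinement gives refined partition}, choose \(w:\Z_{[-\infty,\infty]}\to D\) with \((0)w_V = v_0\) and \(R_0 = R^u(w,0)\); then for each \(p\in P\) define \(w_p\) by extending \(w\restriction_{\Z_{[-\infty,0]}}\) along \(p\). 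The unindexed rays \(R^u(w_p,\|p\|)\) for \(p\in P\) form a partition of \(R_0\), so replacing \(R_0\) by this partition yields a partition of \(B\) itself (the underlying set is unchanged) whose central vertex multiset is precisely \(N_2\).

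Iterating this claim along the sequence of elementary refinements witnessing \(M_B \rightsquigarrow M'\), we conclude that \(M'\) is a vertex counter for \(B\). Since \(M'\) is also a refinement of \(M\), this completes the proof. The only mildly subtle point is verifying that the elementary refinement indeed keeps the underlying beam fixed rather than merely producing \emph{some} beam with both multisets as counters; this is immediate from the explicit construction in Lemma~\ref{refinement gives refined partition} once one observes that replacing one ray \(R_0\) by its finite disjoint partition into sub-rays does not alter the union.
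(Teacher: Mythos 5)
Your proof is correct and follows the same route as the paper, whose entire proof is the citation ``Lemma~\ref{refinement gives refined partition} together with Theorem~\ref{congruence def 2}.'' Your explicit observation that the construction in the proof of Lemma~\ref{refinement gives refined partition} keeps the underlying beam fixed --- so that a refinement of a vertex counter of \(B\) is again a vertex counter of \(B\) itself, not merely of some beam sharing a counter with \(B\) --- is precisely the point the paper leaves implicit, and it is what makes the deduction go through.
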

\begin{proof}
Lemma~\ref{refinement gives refined partition} together with Theorem~\ref{congruence def 2}.
\end{proof}
\begin{defn}[Conversion matrices]
Suppose that \(f:D_1\to \Wlk(D_2)\) is an UDAF folding and \(M_f:V_{D_1^\circ}\times V_{D_2^\circ} \to \N\)
is a matrix (with rows corresponding to vertices of \(D_1^\circ\) and columns corresponding to vertices of \(D_2^\circ\)).
We say that \(M_f\) is a \textit{conversion matrix} for \(f\) if for all \(B\in B^u(D_1)\) and vertex counters \(M_B\) of \(B\), we have \(M_BM_f\) (viewing multisets as row vectors) is a vertex counter for \(B_1f_*\). 
The following theorem guarantees that these always exist.
\end{defn}
\begin{theorem}(Conversion matrices work)\label{beam type function}
Let \(D_1, D_2\) be UDAF digraphs and let \(f\in \Hom(D_1, \Wlk(D_2))\) by an UDAF folding. There is a conversion matrix for \(f\).
Moreover the map 
\[[B]_{\sim_{D_1, B}} \to [(B)f_*]_{\sim_{D_2, B}}\]
 from \(B^u(D_1)/\sim_{D_1, B}\) to \(B^u(D_2)/\sim_{D_2, B}\) is well defined (recall Definition~\ref{counts and equivalence relations defn}).
\end{theorem}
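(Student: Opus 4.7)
The plan is to build $M_f$ row-by-row using the clopen structure of forward images, and then verify compatibility with beams. By Lemma~\ref{big lem}(2), for each $v\in V_{D_1^\circ}$ the set $\fim_f(v)$ is clopen in $\Hom(\Z_{[0, \infty]}, D_2)$. Since that space carries the product topology with discrete fibres, every clopen set is a finite disjoint union of basic cylinders, so I may write $\fim_f(v)=\bigsqcup_{j=1}^{k_v} C_{p_{v, j}}$, with each $p_{v, j}$ a finite walk in $D_2$ starting at $(v)f_V$. I define the $v$-th row of $M_f$ to be the multiset of terminal vertices of the $p_{v, j}$. (Non-degeneracy of $f$ guarantees that these terminal vertices lie in $V_{D_2^\circ}$, as they are reachable from $(v)f_V\in V_{D_2^\circ}$ and admit infinite forward continuations.)

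Before using this to verify the conversion matrix property, I establish the key sub-claim that the flattened $f$-image of any irrational backward walk $\tau$ in $D_1$ is itself an irrational backward walk in $D_2$. Suppose not: then $\tau^* := (\tau\widetilde f)\operatorname{bcat}$ is eventually periodic with some period block $\rho$ of length $p$. By finiteness of $V_{D_1}$ and the infinitely many positions in $\tau$ whose flattened image hits a common vertex of the periodic region, pigeonhole yields a closed walk $c$ in $D_1$ whose flattened $f$-image is $\rho^k$ for some $k\geq 1$. Splicing $c$ into $\tau$ at a position aligning with a period boundary of $\tau^*$ produces a different backward walk $\tau'$ whose flattened image literally equals $\tau^*$ as a sequence (insertion of $\rho^k$ into a $\rho$-periodic tail at an aligned position leaves the sequence unchanged). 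Extending $\tau$ and $\tau'$ by a common forward walk yields bi-infinite walks $w,w'$ in $D_1$ whose $\widetilde f$-images share a common flattening, so $[w\widetilde f]_{\sim_F}=[w'\widetilde f]_{\sim_F}$; yet $[w]\neq [w']$ because the insertion of a cycle cannot be a global shift when $\tau$ is irrational. This contradicts the bijectivity of $f_*$, and verifying this sub-claim is the main technical obstacle of the proof.

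Given the sub-claim, the conversion matrix property is a bookkeeping exercise. Take a beam $B=\bigsqcup_i R_i$ in $D_1$ with central vertices $v_i$ and vertex counter $M_B=\sum_i \delta_{v_i}$. Bijectivity of $f_*$ gives $(B)f_* = \bigsqcup_i (R_i)f_*$, and the cylinder partition at $v_i$ induces a decomposition $(R_i)f_* = \bigsqcup_j R_{i, j}$, where $R_{i, j}$ consists of those image routes whose forward continuation from the image of $R_i$'s central vertex begins with $p_{v_i, j}$. By the sub-claim, the backward tail of $R_{i, j}$, being the flattened image of $R_i$'s backward tail extended by the finite prefix $p_{v_i, j}$, is irrational; hence $R_{i, j}$ is a genuine ray with central vertex the terminal vertex of $p_{v_i, j}$. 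Summing the contributions yields $M_B M_f$ as a vertex counter of $(B)f_*$.

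Finally, the ``moreover'' clause follows from the conversion matrix property together with Theorem~\ref{congruence def 2} and Lemma~\ref{refinement gives refined partition}. If $B_1\sim_{D_1, B} B_2$, then any pair of their vertex counters lies in a common $\sim_{D_1}$-class, hence admits a common refinement $M^*$, which by Lemma~\ref{refinement gives refined partition} serves simultaneously as a vertex counter for $B_1$ and for $B_2$ via suitably refined ray partitions of each. Then $M^* M_f$ is a vertex counter for both $(B_1)f_*$ and $(B_2)f_*$ by the conversion matrix property, so $(B_1)f_*\sim_{D_2, B}(B_2)f_*$, giving the well-definedness of the induced map on beam-equivalence classes.
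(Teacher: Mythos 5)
Your proposal is correct and follows essentially the same route as the paper: the conversion matrix is built from the clopen decomposition of \(\fim_f(v)\) supplied by Lemma~\ref{big lem}, and well-definedness is deduced by passing to a common vertex counter via Corollary~\ref{counting_refinement}. Your irrationality-preservation sub-claim usefully fills in a verification the paper leaves implicit behind ``Note that this matrix is a conversion matrix for \(f\)''; just take the common forward extension of \(\tau\) and \(\tau'\) to be irrational, so that the resulting shift comparison genuinely forces \([w]\neq[w']\).
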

\begin{proof}
 Suppose that \((B_1)C_{D_1}=(B_2)C_{D_1}=[M]_{\sim_{D_1, M}}\), for some fixed multiset \(M\).
 By Corollary~\ref{counting_refinement}, we may assume without loss of generality that \(M\) is a vertex counter for both \(B_1\) and \(B_2\). For each \(v\in V_{D_1^\circ}\) we know (by Lemma~\ref{big lem}) that the set \(\fim_f(v)\) is clopen. 
 That is, we can choose a finite set of finite walks \(P_v\) in \(D_2\) starting at \((v)f_V\) and ending at various vertices in \(D_2\), with the property that the set of infinite forwards walks in \(D_1\) starting at \(v\) is mapped by \(f\) to the set of infinite forwards walks in \(D_2\) beginning with elements of \(P_v\) (the choice of \(P_v\) is not unique).

We define a matrix \(M_f:V_{D_1^\circ}\times V_{D_2^\circ}\to \N\) (so the vertices in \(V_{D_1^\circ}\) correspond to rows and the vertices in \(V_{D_2^\circ}\) correspond to columns) by
\[(v, v')M_f:= |\{p\in P_v:p\text{ ends at }v'\}|.\]
Note that this matrix is a conversion matrix for \(f\).
It follows that
\[(B_1f_*)C_{D_2}= \left[MM_f\right]_{\sim_{D_2, V}}=(B_2f_*)C_{D_2}\]
and so \(B_1f_* \sim_{D_2, B} B_2f_*\) as required.
\end{proof}
As one might hope, the induced maps between equivalence classes of unindexed beams are always bijections. 
This is the topic of the next lemma and theorem.
\begin{lemma}\label{past-future beam bijection lemma}
If \(D\) is an UDAF digraph and \(n\in \N\), then the natural \(UDAF\) folding from \(L:=\PF(D, -n, 0)\) to \(D\) induces a bijection between the sets \(B^u(L)/\sim_{L, B}\) and \(B^u(D)/\sim_{D, B}\).
\end{lemma}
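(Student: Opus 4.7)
The plan is to use the fact that the standard folding $f\colon L\to D$ with $L:=\PF(D,-n,0)$ is itself an UDAF folding (indeed, an ordinary folding in the sense of Definition~\ref{normal foldings defn}), so Theorem~\ref{beam type function} immediately supplies the candidate forward map $[B]_{\sim_{L,B}}\mapsto [(B)f_*]_{\sim_{D,B}}$ and guarantees it is well defined. The task is then to verify surjectivity and injectivity.

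For surjectivity I would exploit that $f_*\colon \rt(L)\to\rt(D)$ is a bijection on routes (folding property). Given an unindexed ray $R^u(w,0)$ in $D$ with irrational past, there is a unique lift $\tilde w$ of $w$ to a bi-infinite walk in $L$; its past must also be irrational, since a periodic tail in $\tilde w$ would push down to a periodic tail in $w=\tilde wf$. Hence $R^u(\tilde w,0)$ is a legitimate unindexed ray in $L$ whose image under $f_*$ is precisely $R^u(w,0)$. Taking finite disjoint unions, every unindexed beam of $D$ lifts to an unindexed beam of $L$, so the candidate map is surjective (even on beams, before passing to equivalence classes).

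For injectivity suppose $(B_1)f_*\sim_{D,B}(B_2)f_*$, pick vertex counters $\tilde M_1,\tilde M_2\in\N^{V_{L^\circ}}$ of $B_1$ and $B_2$ respectively, and observe that since the central vertex of a ray $R^u(\tilde w,0)$ in $L$ pushes forward under $f_V$ to the central vertex of $R^u(w,0)$, the multisets $f_{V*}(\tilde M_i)\in\N^{V_{D^\circ}}$ are vertex counters for $(B_i)f_*$. By assumption they are $\sim_D$-equivalent, and Theorem~\ref{congruence def 2} produces a common refinement $M^D$ in $D$.

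The crux---and the only real obstacle---is lifting this agreement to $L$. The key observation is this: because vertices of $L$ are length-$n$ past windows in $D$, performing a standard $(\tilde v,n)$-refinement at any $\tilde v\in V_{L^\circ}$ yields a multiset whose elements are the length-$n$ walks in $L$ starting at $\tilde v$, but each such walk's endpoint (viewed as a new past window) is determined entirely by the $n$ edges traversed and not by $\tilde v$ itself. Consequently the $n$-step standard refinement of a multiset on $V_{L^\circ}$ depends only on its $f_V$-pushforward. Applied to $\tilde M_1$ and $\tilde M_2$ this makes their $n$-step refinements coincide; further standard refinements in $L$ (using the bijection between forward walks in $L$ from $\tilde v$ and forward walks in $D$ from $(\tilde v)f_V$, a consequence of $f$ being a folding) lift any further refinement of $f_{V*}(\tilde M_i)$ into $M^D$ to a refinement in $L$. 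This yields a common refinement of $\tilde M_1$ and $\tilde M_2$ in $V_{L^\circ}$, and by Theorem~\ref{congruence def 2} we conclude $\tilde M_1\sim_L\tilde M_2$, i.e.\ $B_1\sim_{L,B}B_2$, finishing injectivity.
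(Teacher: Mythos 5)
Your proposal is correct and rests on the same key observation as the paper's proof: since the vertices of \(L=\PF(D,-n,0)\) are past windows that are completely refreshed after \(n\) steps, the result of performing a standard length\nobreakdash-\(n\) (the paper uses \(n+1\)) refinement at every vertex of a multiset on \(V_{L^\circ}\) depends only on its pushforward to \(V_{D^\circ}\). The only real difference is organizational: the paper chooses a common vertex counter for \((B_1)f_*\) and \((B_2)f_*\) downstairs and lifts it to vertex counters of \(B_1\) and \(B_2\) before refining, whereas you push arbitrary counters down, pass to a common refinement \(M^D\), and lift the refinements back up --- both work, but note that your sentence ``this makes their \(n\)-step refinements coincide'' is only valid \emph{after} the lift to \(M^D\) has equalized the pushforwards, so the two halves of your final paragraph should be applied in the opposite order; since you supply both ingredients explicitly, this is a presentational slip rather than a gap.
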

\begin{proof}
We assume without loss of generality that \(D=D^\circ\). By Theorem \ref{beam type function} we know that \(f_*\) induces a function \(f_*'\) from \(B^u(L)/\sim_{L, B}\) to \(B^u(D)/\sim_{D, B}\). 
As \(f_*\) is surjective, it follows that \(f_*'\) is also surjective. 
It remains to show that \(f_*'\) is injective.

Suppose that \([B_1]_{\sim_{L, B}}f_*'=[B_2]_{\sim_{L, B}}f_*'\). 
It follows that \(((B_1)f_*)C_D=((B_2)f_*)C_D\).

Let \(M'\) be a common vertex counter for \(B_1f_*\) and \(B_2f_*\). Let \(\{R_{0},R_{1}, \ldots R_{k-1}\}\) be a partition of \(B_1f_*\) into unindexed rays, and \((v_i')_{i<k}\) be such that \(v_i'\) is a central vertex for \(R_i\) and for all \(v'\in V_{D^\circ}\) we have
\[(v')M'=|\makeset{i<k}{\(v_i'=v\)}|.\]
By the choice of \(f\), we know that each \(R_if_*^{-1}\) is a single unindexed ray (you always have \(n\) edges of past information).
Moreover, for each \(i\), we can choose a central vertex \(v_{i}\) of \((R_i)f^{-1}\) with \((v_i)f_V=v_i'\). 
We define \(M_1\) to be the multiset of the vertices \(v_{i}\), observe that \(M_1\) is a vertex counter for \(B_1\).

For each vertex \(v\) of \(L\) let \(M_v\) denote the multiset of vertices at the end of the length \(|n|+1\) walks in \(L\) starting at \(v\). 
It follows that
\[\widetilde{M_1}:= \sum_{v\in V_{L}}((v)M_1) \cdot M_v\]

is a refinement of \(M_1\) and hence a vertex counter for \(B_1\).
By the choice of \(L\), the multisets \(M_{v_1}\) and \(M_{v_2}\) agree whenever \((v_1)f=(v_2)f\).
Thus if \(v'\in V_D\) is arbitrary and \(v\in (v')f_V^{-1}\), then we may define \(M_{v'}:=M_{v}\) (these are multisets of vertices of \(L\)).
Moreover, by the construction of \(M_1\), we have for all \(v'\in V_{D}\) that
\[\sum_{v\in (v')f_V^{-1}}(v)M_1= (v')M'.\]
We obtain
\begin{align*}
\widetilde{M_1}&= \sum_{v\in V_{L}}((v)M_1)\cdot  M_v=\sum_{v'\in V_D}\left(\sum_{v\in (v')f_V^{-1}}((v)M_1) \cdot M_{v'}\right)\\
  &=\sum_{v'\in V_D}\left(\sum_{v\in (v')f_V^{-1}}((v)M_1) \right)\cdot M_{v'}=\sum_{v'\in V_D}((v')M')\cdot M_{v'}.
\end{align*}
It follows that if we construct \(\widetilde{M_2}\) analogously to be a vertex counter for \(B_2\) we obtain the same multiset. So \(B_1\) and \(B_2\) have a common vertex counter as required.
\end{proof}
\begin{theorem}\label{beam bijections}
If \(D_1, D_2\) are UDAF digraphs and \(\phi:\rt(D_1)\to \rt(D_2)\) is an UDAFG isomorphism, then the map \(\phi':B^u(D_1)/\sim_{D_1, B}\to B^u(D_2)/\sim_{D_2, B}\) defined by
\[([B]_{\sim_{D_1, B}})\phi' = [(B)\phi]_{\sim_{D_2, B}}\]
is a well-defined bijection.
\end{theorem}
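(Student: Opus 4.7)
The plan is to decompose $\phi$ into a composition whose factors each descend to $\sim_B$-equivalence classes of unindexed beams, and then derive both well-definedness and bijectivity of $\phi'$ by functoriality. First, by Theorem~\ref{udaf isomorphisms theorem}, $\phi$ is transducerable, so there exist an UDAF digraph $T$ and UDAF foldings $f: T \to \Wlk(D_1)$ and $g: T \to \Wlk(D_2)$ with $\phi = f_*^{-1} g_*$. Applying Lemma~\ref{big lem} to $f$ produces a past-future digraph $P$ of $D_1$ with standard folding $h: P \to D_1$ and an UDAF folding $g': P \to \Wlk(T)$ such that $h_* = g'_* f_*$; since $g'_*$ is a bijection this rearranges to $f_*^{-1} = h_*^{-1} g'_*$, yielding the key decomposition $\phi = h_*^{-1} g'_* g_*$ as bijections $\rt(D_1) \to \rt(D_2)$.

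For well-definedness, I would check that each of the three factors descends to the quotient. Theorem~\ref{beam type function} directly handles the induced maps of the UDAF foldings $g'$ and $g$, so $g'_*$ and $g_*$ each descend to well-defined functions on $\sim_B$-classes. For $h_*^{-1}$, Lemma~\ref{past-future beam bijection lemma} states that the standard folding $h$ of a past-future digraph induces a bijection $B^u(P)/\sim_{P, B} \to B^u(D_1)/\sim_{D_1, B}$; consequently $h_*^{-1}$ descends to the inverse bijection. The composition of three well-defined maps on equivalence classes is well-defined, and this composition agrees on representative beams with the beam-level map induced by $\phi$, so $\phi'$ is well-defined.

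For bijectivity, I would observe that $\phi^{-1}$ is also an UDAFG isomorphism, so the same argument applied to $\phi^{-1}$ yields a well-defined map $(\phi^{-1})'$ on equivalence classes. Since each factor in both decompositions descends to the quotient, passage to equivalence classes commutes with composition, giving $\phi' \circ (\phi^{-1})' = (\phi \circ \phi^{-1})' = \mathrm{id}$ and likewise on the other side, so $\phi'$ is a bijection with inverse $(\phi^{-1})'$. The main obstacle is that $\phi$ itself is not in general induced by an UDAF folding, so Theorem~\ref{beam type function} cannot be applied to it directly; the trick of using Lemma~\ref{big lem} to replace $f$ by a standard past-future folding is precisely what reduces the problem to already-handled cases.
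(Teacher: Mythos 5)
Your proof is correct and follows essentially the same route as the paper's: both hinge on Lemma~\ref{big lem} to factor the relevant folding through a standard past-future folding, on Lemma~\ref{past-future beam bijection lemma} to handle that standard folding's beam map, and on Theorem~\ref{beam type function} for well-definedness of the beam maps induced by UDAF foldings. The only differences are organizational --- the paper reduces to the case \(\phi = f_*\) for a single UDAF folding via the generating-set definition of UDAFG and then proves surjectivity and injectivity of \(f_*'\) directly from the factorization, whereas you decompose \(\phi\) via a transducer and obtain bijectivity by applying the well-definedness argument symmetrically to \(\phi^{-1}\) --- and both work (the paper also inserts one technical sentence, which you elide, identifying a general \(\PF(D,m,n)\) with \(\PF(D,m-n,0)\) so that Lemma~\ref{past-future beam bijection lemma} literally applies).
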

\begin{proof}
By the definition of UDAFG, it suffices to consider the case that \(\phi=f_*\) for some UDAF folding \(f:D_1\to \Wlk(D_2)\). 
By Lemma~\ref{big lem}, there is a past-future digraph \(P\) of \(D_1\) and an UDAF folding \(g\in \Hom(P, \Wlk(D_1))\)) such that \(g_*f_*\) is the standard UDAF folding from \(P\) to \(D_2\). 

Note that for all \((m, n)\in \Z_{\leq 0} \times \Z_{\geq 0}\), there is a natural digraph isomorphism \(\psi:PF(D_2, m,n)\to PF(D_2, m-n,0)\) which preserves the UDAFG isomorphism induced by their standard UDAF foldings.
It thus follows from Lemma~\ref{past-future beam bijection lemma} that the function \((g_*f_*)'=g_*'f_*'\) is a bijection.

Thus \(f_*'\) is a surjection. 
It remains to show that \(f'\) is an injection. 
The map \(f_*(g_*f_*)^{-1}g_*=f_*(f_*)^{-1}(g_*)^{-1}g_*\) is the identity map on the set \(\rt(D_1)\). 
Thus the map \(f_*'((g_*f_*)')^{-1}g_*'\) is also the identity map on the set \(B^u(D_1)/\sim_{D_1, B}\). 
It follows that \(f_*'\) is also injective as required.
\end{proof}
\begin{defn}
If \(D\) is an UDAF digraph, then we define \(\N^{V_{D^\circ}}\) to be the \textit{multiset monoid} of \(D\). It is notably generated (as a monoid) by the elements \(\makeset{\chi_{D, v}}{\(v\in V_D^\circ\)}\) where \(\chi_{D, v}:V_D^\circ \to \N\) is defined by
\[(w)\chi_{D, v}=\left\{\begin{array}{lr}
1     &  w = v \\
0     &  w \neq v
\end{array}\right\}.\]
We then define the \textit{multiset semigroup} \(MS(D)\) of \(D\) to be the semigroup generated by \(\makeset{\chi_{D, v}}{\(v\in V_D^\circ\)}\). This is the same as \(\N^{V_{D^\circ}}\) without its identity.
\end{defn}

The reader may wonder why we consider \(MS(D)\) at all, instead of exclusively using \(\N^{V_{D^\circ}}\). 
This is because the relation \(\sim_{D}\) is a congruence on \(\N^{V_{D^\circ}}\), and we will show later that in many cases the semigroup \(MS(D)/\sim_{D}\) is actually a group. 

\begin{lemma}
If \(D\) is an UDAF digraph, then the relation \( \sim_{D}\) is a congruence on the multiset monoid (and hence multiset semigroup) of \(D\).
\end{lemma}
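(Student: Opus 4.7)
The plan is to use Theorem~\ref{congruence def 2}, which characterises $\sim_D$ as ``has a common refinement''. So I would reduce the congruence statement to the assertion that common refinements behave well under addition of multisets.

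First I would prove the key lemma: if $R$ is an elementary refiner applied to $A$ (using a vertex $v$ with $(v)A > 0$ and a choice of finite walks $P$), then the same $R$ is a valid elementary refiner applied to $A + B$ for any $B \in \N^{V_{D^\circ}}$. This is immediate from the definition, since the only prerequisite to applying $R$ to a multiset $X$ is that $(v)X > 0$, and $(v)(A+B) \geq (v)A > 0$. Iterating, any refinement sequence $A = M_0, M_1, \ldots, M_k = N$ can be ``shifted'' to a refinement sequence $A+B = M_0 + B, M_1 + B, \ldots, M_k + B = N + B$, because the positivity condition is preserved at every step.

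From this it follows directly that if $N_1$ is a common refinement of $M_1$ and $M_1'$, and $N_2$ is a common refinement of $M_2$ and $M_2'$, then $N_1 + N_2$ is a common refinement of $M_1 + M_2$ and $M_1' + M_2'$: starting from $M_1 + M_2$, apply the refinement sequence $M_1 \to N_1$ (shifted by $+M_2$) to reach $N_1 + M_2$, then apply the refinement sequence $M_2 \to N_2$ (shifted by $+N_1$) to reach $N_1 + N_2$. The same argument starting from $M_1' + M_2'$ reaches $N_1 + N_2$ as well.

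So assuming $M_1 \sim_D M_1'$ and $M_2 \sim_D M_2'$, Theorem~\ref{congruence def 2} supplies the common refinements $N_1$ and $N_2$, and the same theorem applied in the other direction gives $M_1 + M_2 \sim_D M_1' + M_2'$, which is exactly the congruence property. There is no serious obstacle here: the only thing to verify is the preservation-of-positivity observation above, and the rest is bookkeeping. The passage to the multiset semigroup is immediate since $MS(D) \subseteq \N^{V_{D^\circ}}$ and the congruence property descends to subsemigroups.
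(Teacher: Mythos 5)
Your proposal is correct and follows essentially the same route as the paper: the paper likewise invokes Theorem~\ref{congruence def 2}, observes that any refinement applicable to \(M_1\) can be applied to \(M_1+M_3\) because the relevant vertex still has positive count, and takes the sum of the two common refinements as a common refinement of the two sums. The only difference is presentational — you make the preservation-of-positivity step and the shifted refinement sequence explicit, which the paper leaves as a one-line remark.
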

\begin{proof}
Suppose that \(M_1 \sim_{D} M_2\) and \(M_3 \sim_{D} M_4\).
It suffices to show that \(M_1+M_3 \sim_D M_2 +M_4\).

By Theorem~\ref{congruence def 2}, we can find a common refinement \(M_{1, 2}\) of both \(M_1\) and \(M_2\). Similarly we can find a common refinement \(M_{3, 4}\) of both \(M_3\) and \(M_4\).

The multiset \(M_1+M_3\) is greater than \(M_1\), thus any refinements we can apply to \(M_1\) can be applied to \(M_1+M_3\).
It follows that \(M_{1,2}+M_3\) is a refinement of \(M_1+M_3\). By the same argument applied to \(M_3\) it follows that \(M_{1,2}+M_{3, 4}\) is a refinement of \(M_{1,2}+M_3\).

Similarly \(M_{1, 2} + M_{3, 4}\) is a refinement of \(M_2+M_4\), so \(M_1+M_3\) and \(M_2 + M_4\) have a common refinement. Thus \(M_1+M_3\sim_D M_2 + M_4\) (by Theorem~\ref{congruence def 2}).
\end{proof}
\section{UDAF dimension functor}
In this section we use the results of the previous section to define and explore the UDAF analogue to dimension modules.
\begin{defn}
We now define a functor \(\operatorname{Dim}\) from the UDAFG category to the category of monoids and isomorphisms.
If \((\rt(D),D)\) is a object of UDAFG, \(\phi\) is an UDAFG isomorphism, and \(B\in B^u(D)\) then we define
\begin{align*}
    (\rt(D),D)\operatorname{Dim}&=\N^{V_D^\circ}/ \sim_{D},\\
([(B)C_D]_{ \sim_{D}})(\phi)\operatorname{Dim}&=[(B)\phi C_D]_{\sim_D}
\end{align*}
(by Theorem~\ref{beam bijections} \((\phi)\operatorname{Dim}\) is a well-defined bijection and by the first part of Theorem~\ref{beam type function} it is a semigroup homomorphism).

Due to its connection to dimension modules (Definition~\ref{dimension module defn}) we call the image of this functor, from the UDAFG category, the UDAF dimension category. 
Moreover if \(D\) is an UDAF digraph, then we call \(\N^{V_D^\circ}/\sim_D\) and \(MS(D)/\sim_D\) the \textit{UDAF dimension monoid} and \textit{UDAF dimension semigroup} of \(D\) respectively.
\end{defn}
\begin{defn}
We say that two UDAF digraphs \(D_1, D_2\) are strong UDAF equivalent if \((\rt(D_1),D_1)\) and \((\rt(D_2),D_2)\) are isomorphic in the UDAFG category, and we say that they are weak UDAF equivalent if the monoids \((D_1)\operatorname{Dim}\) and \((D_2)\operatorname{Dim}\) are isomorphic.
\end{defn}
Note that if \(D_1\) and \(D_2\) are strong UDAF equivalent, then they are also weak UDAF equivalent.

\begin{defn}
If \(D\) is an UDAF digraph, then we define its canonical adjacency matrix \(\adj_D:V_D^2 \to \N\), canonical identity matrix \(I_D:V_D^2 \to \N\) and canonical relator matrix \(\rel_D:V_D^2 \to \Z\) by
\begin{align*}
    (v, w)\adj_D&:= \left|\makeset{e\in E_D}{\(((e)s_D, (e)t_D)=(v,w)\)}\right|,\quad (v, w)I_D&:= \left\{\begin{array}{cc}
        1 & \text{if }v= w\\
        0 & \text{if }v\neq w
    \end{array}\right\},
\end{align*}
and  \(\rel_D:= \adj_D-I_D\).
Similarly if \(S\) is any set, \(\phi:V_D\to S\) is a bijection, and \(A:S^2\to \N\), \(I:S^2\to \N\) and \(R:S^2\to \Z\) are defined by

\begin{align*}
    (a, b)A&:= \left|\makeset{e\in E_D}{\(((e)s_D, (e)t_D)=((a)\phi^{-1},(b)\phi^{-1})\)}\right|,\quad (a, b)I&:=\left\{\begin{array}{cc}
        1 & \text{if }a= b\\
        0 & \text{if }a\neq b
    \end{array}\right\}
\end{align*}
and  \(R:= A-I\),
then we say that \(A,I, R\) are respectively an adjacency matrix, an identity matrix, and a relator matrix for \(D\).
\end{defn}
The name ``relator matrix" was chosen because of Lemma~\ref{congruence is nice} in which the rows of this matrix are relators in an abelian group presentation.
Note that two digraphs have a common relator matrix if and only if they have a common adjacency matrix. Hence if they have a common relator matrix then they are isomorphic.

\begin{theorem}\label{refiner rows}
Let \(D\) be an UDAF digraph with \(D=D^\circ\). 
The UDAF dimension monoid of \(D\) is presented as a commutative monoid by:
\[\left\langle V_{D^\circ} \left|
\text{ for all }v\in V_{D^\circ}\text{ impose the relation that }v=\sum_{w\in V_{D^\circ}}  \left({(v, w)\operatorname{Adj}_{D^\circ}}\cdot  w\right) \right.
\right\rangle_{CM}.\]
Moreover the refining monoid \(\operatorname{Refine}(D)\) is generated by the rows of \(\operatorname{Rel}_{D^\circ}\).
\end{theorem}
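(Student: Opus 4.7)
The plan is to reduce both parts of the theorem to understanding what the simplest possible elementary refiners look like. The key observation is that if one takes $P$ to be the set of all length-$1$ walks starting at a vertex $v$, then the resulting standard $(v,1)$-refiner is exactly $R = \sum_w (v,w)\operatorname{Adj}_{D^\circ} \cdot \chi_w - \chi_v$, which is precisely the $v$-th row of $\operatorname{Rel}_{D^\circ}$. So the rows of $\operatorname{Rel}_{D^\circ}$ are elementary refiners, and hence lie in $\operatorname{Refine}(D)$.

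For the ``moreover'' statement, the main content is the converse: every elementary refiner is a sum of rows of $\operatorname{Rel}_{D^\circ}$. To see this, I would argue that any finite anti-chain $P$ of walks starting at $v$ with the property that every infinite forwards walk from $v$ begins with exactly one element of $P$, can be built up from the trivial anti-chain $\{v\}$ (the length $0$ walk at $v$) by iteratively replacing a walk $p \in P$ ending at some vertex $w$ by the set of its length-$1$ extensions. This is a straightforward induction on, say, the maximum length of a walk in $P$, using the fact that $D = D^\circ$ guarantees every vertex has outgoing edges. Each such replacement step changes the corresponding elementary refiner by exactly the $w$-th row of $\operatorname{Rel}_{D^\circ}$ (the new end-vertex multiset gains $\sum_u (w,u)\operatorname{Adj}_{D^\circ}\cdot \chi_u$ and loses $\chi_w$). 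Summing over all replacement steps expresses the elementary refiner associated to $P$ as a sum of rows of $\operatorname{Rel}_{D^\circ}$.

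For the presentation, let $\equiv$ denote the commutative monoid congruence on $\N^{V_{D^\circ}}$ generated by the pairs $\bigl(\chi_v,\ \sum_w (v,w)\operatorname{Adj}_{D^\circ}\cdot\chi_w\bigr)$ for $v \in V_{D^\circ}$; I need to prove $\equiv\ =\ \sim_D$. For $\equiv\ \subseteq\ \sim_D$: applying a generating relation in the forward direction to a multiset $M$ with $(v)M \geq 1$ produces exactly a standard $(v,1)$-refinement of $M$; hence any single relation step exhibits two multisets with a common refinement, so they are $\sim_D$-equivalent by Lemma~\ref{refinement gives refined partition}, and transitivity closes the argument. For $\sim_D\ \subseteq\ \equiv$: by Theorem~\ref{congruence def 2}, $M_1 \sim_D M_2$ forces them to share a common refinement $M_3$, so it suffices to show that any elementary refinement is a sequence of forward $\equiv$-steps. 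Given an elementary refinement of $M$ using $(v, P)$, I would build $P$ from $\{v\}$ by the same iterated length-$1$ extensions as in the moreover part; each such extension at a walk ending at $w$ can be realized inside $\N^{V_{D^\circ}}$ as applying the $w$-th generating relation in the forward direction to a single copy of $\chi_w$ currently present in the intermediate multiset (this copy exists because it was produced by the previous extension step, or was present initially as $\chi_v$ in $M$). Chaining these yields an $\equiv$-path from $M$ to its refinement, as required.

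The main obstacle, and the step I would plan most carefully, is the bookkeeping in the inductive decomposition of a general anti-chain $P$ into iterated length-$1$ extensions: I need the intermediate multisets to remain in $\N^{V_{D^\circ}}$ so that the forward $\equiv$-steps are literally valid, which is why I expand one walk at a time rather than attempt to write the refiner directly as a $\Z$-linear combination. Once that reduction is in place, the two parts of the theorem fall out in parallel: the presentation statement is the combinatorial identification $\equiv\ =\ \sim_D$, and the ``moreover'' statement is the same identification read one level lower, at the level of the monoid $\operatorname{Refine}(D) \subseteq \Z^{V_{D^\circ}}$.
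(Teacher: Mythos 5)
Your proposal is correct and follows essentially the same route as the paper's (much terser) proof: identify the rows of \(\operatorname{Rel}_{D^\circ}\) with the standard \((v,1)\)-refiners, reduce every elementary refinement to a sequence of these, and then transfer the identification of congruences through Theorem~\ref{congruence def 2}. The anti-chain induction you spell out (building a general complete prefix set \(P\) from the length-\(0\) walk by one-step expansions, with the check that intermediate multisets stay in \(\N^{V_{D^\circ}}\)) is precisely the decomposition the paper asserts without proof, so your write-up is a legitimately fleshed-out version of the same argument.
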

\begin{proof}
A row \(v\) of the canonical relator matrix for \(D\) corresponds precisely to what is added to a multiset when a standard \((v, 1)\)-refinement is performed to it.
Thus the congruence on \(\N^{V_{D^\circ}}\) defined in the above presentation is contained in \(\sim_D\).

Moreover if \(M_1, M_2\in \N^{V_{D^\circ}}\) and \(M_2\) is a refinement of \(M_1\), then \(M_2\) can be obtained from \(M_1\) via a sequence of standard \((v, 1)\)-refinements (for various choices of \(v\)). 
In particular the refining monoid \(\operatorname{Refine}(D)\) is generated by the rows of \(\operatorname{Rel}_{D^\circ}\) and the congruences agree by Theorem~\ref{congruence def 2}.
\end{proof}
\begin{corollary}
Suppose that a digraph \(D\) can be expressed as a disjoint union of UDAF digraphs \(D_0, D_1, \ldots , D_{k-1}\). 
Then \(D\) is an UDAF digraph and the UDAF dimension monoid of \(D\) is isomorphic to the free product
of the UDAF dimension monoids of \(D_0, D_1, \ldots, D_{k-1}\) as commutative monoids (which is notably the same as their direct product).
\end{corollary}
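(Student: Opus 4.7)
The plan is to reduce everything to the presentation supplied by Theorem~\ref{refiner rows} and to exploit the fact that, under disjointness, the canonical adjacency matrix of \(D^\circ\) is block diagonal. First I would verify that \(D\) is indeed an UDAF digraph: finiteness of the vertex and edge sets is immediate from the finiteness of each summand; for the two infinite-walks conditions in Definition~\ref{udaf digraph defn}, every vertex \(v\in V_D\) lies in exactly one \(V_{D_i}\), and because \(D\) is a disjoint union any infinite forwards (respectively backwards) walk at \(v\) stays inside \(D_i\), so \(\Hom(\Z_{[0,\infty]}, D)\) and \(\Hom(\Z_{[-\infty,0]}, D)\) starting at \(v\) coincide with the corresponding sets for \(D_i\), and the required cardinality conditions transfer from each \(D_i\) to \(D\).

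Next I would observe that the core operation commutes with disjoint union: a vertex or edge of \(D\) lies in a bi-infinite walk of \(D\) precisely when it lies in a bi-infinite walk of the unique summand \(D_i\) containing it, so \(D^\circ\) is the disjoint union of the \(D_i^\circ\). Consequently \(V_{D^\circ}\) partitions as \(\bigsqcup_i V_{D_i^\circ}\), and the canonical adjacency matrix \(\operatorname{Adj}_{D^\circ}\) is block diagonal with diagonal blocks \(\operatorname{Adj}_{D_i^\circ}\), since there are no edges of \(D\) between distinct \(D_i\)'s.

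With this in place, I would apply Theorem~\ref{refiner rows} to \(D\) and to each \(D_i\). The presentation for \(D\) has generating set \(V_{D^\circ}=\bigsqcup_i V_{D_i^\circ}\) and, for each \(v\in V_{D_i^\circ}\), the relation
\[
v=\sum_{w\in V_{D^\circ}} \bigl((v, w)\operatorname{Adj}_{D^\circ}\bigr)\cdot w =\sum_{w\in V_{D_i^\circ}} \bigl((v, w)\operatorname{Adj}_{D_i^\circ}\bigr)\cdot w,
\]
where the second equality uses block diagonality. Thus the relations for generators from the \(i\)-th block involve only generators from the \(i\)-th block, and this is exactly the disjoint union of the individual commutative monoid presentations for the \((D_i)\operatorname{Dim}\). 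Hence \((D)\operatorname{Dim}\) is the free product in the category of commutative monoids of the \((D_i)\operatorname{Dim}\); since the coproduct of finitely many commutative monoids coincides with their direct product (the direct sum), the stated isomorphism follows.

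The only mildly subtle step is verifying that \(D^\circ\) really is the disjoint union of the \(D_i^\circ\); every other piece is formal manipulation of the presentation. In particular, the reason the commutative-monoid free product coincides with the direct product is standard (the universal property is realised componentwise by pairs of monoid homomorphisms), so no additional work is required beyond invoking it.
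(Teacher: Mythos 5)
Your proof is correct and follows exactly the route the paper intends: the corollary is stated without proof as an immediate consequence of Theorem~\ref{refiner rows}, and your argument (cores commute with disjoint union, the adjacency matrix is block diagonal, so the presentation splits as a coproduct of the individual presentations, which for commutative monoids is the direct product) is precisely the omitted derivation. The verification that \(D\) is an UDAF digraph and that \(D^\circ=\bigsqcup_i D_i^\circ\) is handled correctly.
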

\begin{lemma}\label{strongly connected congruence}
If \(D\) is a strongly connected UDAF digraph, and  \(M_1, M_2\in MS(D)\) then \(M_1 \sim_{D} M_2 \iff( M_1+\operatorname{Refine}(D)) \cap (M_2 + \operatorname{Refine}(D)) \neq \varnothing\).
\end{lemma}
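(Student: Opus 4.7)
The plan is to handle the easy direction $(\Rightarrow)$ directly from Theorem~\ref{congruence def 2}, and to reduce the harder direction $(\Leftarrow)$ to a single auxiliary claim about refining any nonzero multiset to arbitrarily large uniform vertex counts, which is where strong connectedness is essential. For $(\Rightarrow)$, if $M_1 \sim_D M_2$ then Theorem~\ref{congruence def 2} supplies a common refinement $M_3$; since each $M_3 - M_i$ is by construction a sum of elementary refiners, it lies in $\operatorname{Refine}(D)$, so $M_3 \in (M_1 + \operatorname{Refine}(D)) \cap (M_2 + \operatorname{Refine}(D))$. This direction will not require strong connectedness.

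For $(\Leftarrow)$, suppose $M_3 \in (M_1 + \operatorname{Refine}(D)) \cap (M_2 + \operatorname{Refine}(D))$. Transitivity of $\sim_D$ reduces the problem, by symmetry, to the claim that whenever $M \in MS(D)$, $R \in \operatorname{Refine}(D)$, and $M + R \in MS(D)$, the multisets $M$ and $M + R$ admit a common refinement. The key auxiliary fact I would establish is that, under strong connectedness of $D$, every $M \in MS(D)$ can be refined to a multiset $M^\dagger$ with $(v)M^\dagger \geq K$ for every $v \in V_{D^\circ}$ and every prescribed $K$. The plan for this fact is to pick some $w$ with $(w)M > 0$ and apply a small collection of standard $(w, n)$-refinements for suitable lengths $n$: since $D$ is strongly connected but, by the UDAF condition ruling out a unique infinite forward walk at each vertex, is not merely a single directed cycle, its adjacency matrix has spectral radius strictly greater than $1$, so the number of length-$n$ walks from $w$ to each vertex grows without bound as $n$ varies, and combining refinements over a few different lengths neutralises any periodicity. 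Importantly, reaching uniform count $K$ requires only finitely many elementary refinement steps, bounded in terms of $|M|$ and $|V_{D^\circ}|$ rather than $K$.

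Given this, the plan is to fix $K$ very large, refine $M$ to $M^\dagger$ and $M + R$ to $(M+R)^\ddagger$, each with all vertex counts at least $K$, and set $R^\dagger := M^\dagger - M$ and $R^\ddagger := (M+R)^\ddagger - (M+R)$, both in $\operatorname{Refine}(D)$. Once $K$ exceeds the total number of elementary refiners appearing in $R + R^\ddagger$ and separately in $R^\dagger$, the elementary refiners summing to $R + R^\ddagger$ may be applied to $M^\dagger$ in any order without any coordinate going negative, because each elementary refiner only decreases the count at its own base vertex and by at most $1$; hence $M^\dagger + R + R^\ddagger$ will be a refinement of $M^\dagger$, and therefore of $M$. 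Symmetrically, $(M+R)^\ddagger + R^\dagger$ will be a refinement of $M + R$. Since both multisets equal $M + R + R^\dagger + R^\ddagger$ they coincide, giving the required common refinement, and Theorem~\ref{congruence def 2} yields $M \sim_D M + R$. The main obstacle will be the apparent circularity in choosing $K$: it must exceed the sizes of $R^\dagger$ and $R^\ddagger$, yet these are produced while refining to uniform count $K$. This dissolves via the auxiliary claim, which delivers any uniform $K$ at the cost of a number of elementary refinement steps bounded in terms of $|M|$ and $|V_{D^\circ}|$ independently of $K$, so I would pick the refinement lengths first and only then choose $K$ larger than any size that arises.
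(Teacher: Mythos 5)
Your proof is correct and follows essentially the same route as the paper: both directions go through Theorem~\ref{congruence def 2}, and the reverse direction is handled by padding each multiset with a refiner of uniformly large entries (available by strong connectedness plus the UDAF condition) so that the given elements of \(\operatorname{Refine}(D)\) can be realised as genuine refinement sequences --- the paper simply writes the resulting common refinement symmetrically as \(M_1+R_3+R_1+R_4=M_2+R_4+R_2+R_3\) instead of passing through \(M_3\) and transitivity, and your count of elementary refiners (each lowering only its base coordinate, by at most \(1\)) is if anything a more careful justification of why the padding suffices than the paper's choice of \(n\). The one line worth adding is that the common element \(M_3\) may be assumed to lie in \(MS(D)\) (replace it by \(M_3+R'\) for a refiner \(R'\) with sufficiently large entries), since \(\sim_D\) is only defined on \(\N^{V_{D^\circ}}\) and your transitivity step needs \(M_3\) there.
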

\begin{proof}
We show the reverse implication, the forwards implication is immediate from Theorem~\ref{congruence def 2}. 
Suppose that \(M_1, M_2\in MS(D)\) and \(M_3=M_1+R_1=M_2+R_2\) for some \(R_1, R_2\in \operatorname{Refine}(D)\).

Let \(n := \max\left(\makeset{|(v)R_1|+|(v)R_2|+|(v)M_1| + |(v)M_2|}{\(v\in V_D\)}\right)\). 
As \(D\) is a strongly connected UDAF digraph, we can find \(R_3, R_4\in \operatorname{Refine}(D)\) such that \(M_1+R_3\) and \(M_2+R_4\) are refinements of \(M_1, M_2\) respectively and each of \(R_3, R_4\) has minimum entry at least \(n\). It follows from the choice of \(n\) that
\[M_1+R_3+R_1 + R_4= M_2+R_4 + R_2 + R_3,\]
is a common refinement of \(M_1\) and \(M_2\).
In particular \(M_1 \sim_{D} M_2\) as required.
\end{proof}
\begin{lemma}\label{congruence is nice}
If \(D\) is a strongly connected UDAF digraph,  \(M_1, M_2\in MS(D)\), and \( \langle \operatorname{Refine}(D)\rangle\) is the additive group generated by \(\operatorname{Refine}(D)\), then
 \[M_1 \sim_{D} M_2 \iff M_2\in M_1+\operatorname{Refine}(D)-\operatorname{Refine}(D) \iff M_2\in M_1 + \langle \operatorname{Refine}(D)\rangle.\]
\end{lemma}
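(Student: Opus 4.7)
The plan is to chain together two short equivalences, each of which is essentially a bookkeeping rearrangement of results already in hand.

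First I would establish $M_1 \sim_D M_2 \iff M_2 \in M_1 + \operatorname{Refine}(D) - \operatorname{Refine}(D)$ by directly invoking Lemma~\ref{strongly connected congruence}. That lemma says $M_1 \sim_D M_2$ is equivalent to the existence of some $M_3 \in (M_1 + \operatorname{Refine}(D)) \cap (M_2 + \operatorname{Refine}(D))$, i.e.\ the existence of $R_1, R_2 \in \operatorname{Refine}(D)$ with $M_1 + R_1 = M_2 + R_2$. Rearranging inside the ambient group $\Z^{V_{D^\circ}}$ gives $M_2 = M_1 + R_1 - R_2 \in M_1 + \operatorname{Refine}(D) - \operatorname{Refine}(D)$, and conversely any such expression yields a common element of $M_1 + \operatorname{Refine}(D)$ and $M_2 + \operatorname{Refine}(D)$ by adding $R_2$ to both sides.

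Second I would establish $M_1 + \operatorname{Refine}(D) - \operatorname{Refine}(D) = M_1 + \langle \operatorname{Refine}(D) \rangle$. The containment $\subseteq$ is immediate since $R_1 - R_2 \in \langle \operatorname{Refine}(D)\rangle$ whenever $R_1, R_2 \in \operatorname{Refine}(D)$. For the reverse, every element of the abelian group $\langle \operatorname{Refine}(D)\rangle$ can be written as a finite signed sum $\sum_i \varepsilon_i R_i$ with $\varepsilon_i \in \{+1, -1\}$ and $R_i \in \operatorname{Refine}(D)$; grouping the positive and negative terms separately and using that $\operatorname{Refine}(D)$ is closed under addition (it is by definition a submonoid of $\Z^{V_{D^\circ}}$), the sum collapses to $R_+ - R_-$ for some $R_+, R_- \in \operatorname{Refine}(D)$, as required.

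There is no serious obstacle here: the substantive work has been done in Lemma~\ref{strongly connected congruence}, which used strong connectivity to absorb the base multisets $M_1, M_2$ into refinements with large enough entries. The only thing to be careful about in the write-up is to perform all the arithmetic inside $\Z^{V_{D^\circ}}$ (where subtraction makes sense) rather than inside $MS(D)$, and to note explicitly that $\operatorname{Refine}(D)$ is a submonoid so that the ``positive part minus negative part'' decomposition of an element of $\langle \operatorname{Refine}(D) \rangle$ lies in $\operatorname{Refine}(D) - \operatorname{Refine}(D)$.
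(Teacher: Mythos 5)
Your proposal is correct and matches the paper's intent exactly: the paper's proof is simply ``This is immediate from Lemma~\ref{strongly connected congruence},'' and the two rearrangements you spell out (rewriting the non-empty intersection as $M_2 = M_1 + R_1 - R_2$, and collapsing signed sums over the submonoid $\operatorname{Refine}(D)$ to a single difference $R_+ - R_-$) are precisely the omitted bookkeeping.
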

\begin{proof}
This is immediate from Lemma~\ref{strongly connected congruence}.
\end{proof}
\begin{theorem}\label{UDAF dimension group}
If \(D\) is a strongly connected UDAF digraph, then the UDAF dimension semigroup of \(D\) is an abelian group, with abelian group presentation
\[\left\langle V_{D^\circ} \left|
\text{ for all }v\in V_{D^\circ}\text{ impose the relation that }0=-v+\sum_{w\in V_{D^\circ}}  \left({(v, w)\operatorname{Adj}_{D^\circ}}\cdot  w\right) \right.
\right\rangle_A\]
or the empty semigroup when \(D\) has no edges.
\end{theorem}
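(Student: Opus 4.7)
The plan is to reduce the assertion about the semigroup $MS(D)/\sim_D$ to a computation in the abelian group $A := \Z^{V_D}/G$, where $G := \langle \operatorname{Refine}(D)\rangle$, and then use strong connectivity together with the UDAF hypothesis to show the natural map $\N^{V_D}/\sim_D \to A$ is a bijection. The degenerate case is immediate: if $D$ is strongly connected with no edges then $D$ has at most one vertex, that vertex lies in no bi-infinite walk, so $V_{D^\circ}=\varnothing$ and $MS(D)$ is the empty semigroup. Assume for the rest that $D$ has an edge; strong connectivity then forces $D=D^\circ$.

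By Theorem~\ref{refiner rows}, $\operatorname{Refine}(D)$ is generated as a monoid by the rows of $\operatorname{Rel}_D$, so $G$ is the subgroup of $\Z^{V_D}$ generated by those same rows. By Lemma~\ref{congruence is nice}, the congruence $\sim_D$ on $\N^{V_D}$ coincides with the restriction of the coset equivalence for $G$. Hence $\N^{V_D}/\sim_D$ embeds naturally into $A$, which is exactly the abelian group with the presentation stated in the theorem. What remains is to show that this embedding is surjective and that the $\sim_D$-class of $0$ is represented in $MS(D) = \N^{V_D}\setminus\{0\}$. Both tasks are accomplished by producing a \emph{positive refiner}: an element $R\in\operatorname{Refine}(D)$ with every coordinate strictly positive. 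Given such an $R$, any $M\in\Z^{V_D}$ satisfies $M + kR \in \N^{V_D}\setminus\{0\}$ for $k$ large, giving surjectivity, and $R$ itself is a nonzero element of $MS(D)\cap G$, so the class of $0$ has a representative in $MS(D)$. Consequently $MS(D)/\sim_D = \N^{V_D}/\sim_D \cong A$, which is a group.

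The main obstacle is producing such an $R$. My approach is to perform a standard $(v, n)$-refinement simultaneously from every vertex $v$, producing the refiner $R_n := (1,\ldots,1)(\operatorname{Adj}_D^n - I)$, whose $w$-coordinate equals $-1$ plus the number of length-$n$ walks in $D$ ending at $w$. To force every coordinate of $R_n$ strictly positive, I use that $D$ cannot be a single cycle: if every vertex had out-degree $1$, strong connectivity would make $D$ a cycle, producing a unique infinite forward walk from each vertex and violating condition (2) of Definition~\ref{udaf digraph defn}. Hence some vertex has out-degree $\geq 2$, whence by Perron--Frobenius the spectral radius of $\operatorname{Adj}_D$ strictly exceeds $1$ and the entries of $\operatorname{Adj}_D^n$ grow without bound along an arithmetic progression. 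Choosing $n$ to be a sufficiently large multiple of the period of $D$ then makes every column sum of $\operatorname{Adj}_D^n$ exceed $1$, so every coordinate of $R_n$ is positive, completing the construction.
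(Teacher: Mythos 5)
Your argument is correct and follows essentially the same route as the paper's: both identify \(MS(D)/\sim_D\) with (the image in) \(\Z^{V_{D^\circ}}/\langle \operatorname{Refine}(D)\rangle\) via Lemma~\ref{congruence is nice} and Theorem~\ref{refiner rows}, and then use an everywhere-positive refiner to absorb the positivity constraint --- the paper phrases this as inverse-closedness of the image where you phrase it as surjectivity, and it merely asserts the existence of the large positive refiner that you construct explicitly via the not-a-cycle/Perron--Frobenius observation. One cosmetic slip: the map \(\N^{V_{D^\circ}}/\sim_D\to A\) is not injective (the class of the empty multiset is a singleton under \(\sim_D\) yet maps to \(0+G\), as does your positive refiner \(R\)), so the intermediate equality \(MS(D)/\sim_D=\N^{V_{D^\circ}}/\sim_D\) should be dropped; restricted to \(MS(D)\), as the theorem requires, everything goes through.
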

\begin{proof}
We assume that \(D\) has edges, hence \(D^\circ = D\). Let
\(G := \Z^{V_{D^\circ}}/\langle \operatorname{Refine}(D)\rangle.\)
Let \(f:MS(D)\to G\) be the semigroup homomorphism defined by
\[(M)f = M+\langle \operatorname{Refine}(D)\rangle.\]
By Lemma~\ref{congruence is nice} and Theorem~\ref{refiner rows}, the semigroup homomorphism \(f\) has kernel \( \sim_{D}\).
Thus by the First Isomorphism Theorem for semigroups \(MS(D)/ \sim_{D}\cong \im(f)\).
It therefore suffices to show that \(\im(f)\) is inverse closed (note that \(\im(f)\) is generated as a semigroup by \(\makeset{(\chi_{D,v})f}{\(v\in V_D^\circ\)}
=\makeset{\chi_{D,v} + \langle \operatorname{Refine}(D)\rangle}{\(v\in V_D^\circ\)}\)).

As \(D\) is a strongly connected UDAF digraph, it follows that for all \(n\in \N\) we can find \(R_n\in \operatorname{Refine}(D)\) with \((v)R_n>n\) for all \(v\). Let \(M+ \langle \operatorname{Refine}(D)\rangle\in \im(f)\), with inverse \(-M+ \langle \operatorname{Refine}(D)\rangle\). Let \(n\in \N\) be such that \(R_n-M\) is positive in every coordinate. Then 
\[-M +\langle \operatorname{Refine}(D)\rangle=R_n-M+ \langle \operatorname{Refine}(D)\rangle=(R_n-M)f\in \im(f). \]
\end{proof}
\begin{corollary}
If \(D_1\) and \(D_2\) are strongly connected UDAF digraphs with edges and \(|D_1|=|D_2|\), then \(D_1\) and \(D_2\) are weak UDAF equivalent if and only if \(\operatorname{Rel}_{D_1}\) and \(\operatorname{Rel}_{D_2}\) have the same Smith normal form (see \cite{Smithnormalform} for background on Smith normal form).
\end{corollary}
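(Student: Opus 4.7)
The plan is to reduce the claim to the standard fact from the structure theory of finitely generated abelian groups that two integer matrices of the same size have isomorphic cokernels if and only if they share a Smith normal form.

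First, since $D_1$ and $D_2$ are strongly connected with edges, $D_i=D_i^\circ$ for each $i$, so the hypothesis of Theorem~\ref{UDAF dimension group} is met and its conclusion identifies the UDAF dimension semigroup $MS(D_i)/\sim_{D_i}$ with the abelian group $G_i := \Z^{V_{D_i}}/\langle \operatorname{Refine}(D_i)\rangle$. I would first close the small gap between the dimension \emph{monoid} $(D_i)\operatorname{Dim}=\N^{V_{D_i}}/\sim_{D_i}$ (which is what weak UDAF equivalence is defined with) and this dimension \emph{semigroup}: the semigroup homomorphism $f:MS(D_i)\to G_i$ from the proof of Theorem~\ref{UDAF dimension group} extends to a monoid homomorphism $\tilde f:\N^{V_{D_i}}\to G_i$ sending the empty multiset to $0_{G_i}$, and since $f$ was already shown to be surjective, the empty multiset is $\sim_{D_i}$-equivalent to some non-empty multiset. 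Consequently $(D_i)\operatorname{Dim}\cong G_i$ as monoids.

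Next, apply Theorem~\ref{refiner rows}, which says that the refining monoid $\operatorname{Refine}(D_i)$ is generated by the rows of $\operatorname{Rel}_{D_i}$. Therefore $\langle \operatorname{Refine}(D_i)\rangle$ is precisely the integer row lattice of $\operatorname{Rel}_{D_i}$. Writing $n:=|D_1|=|D_2|$, this yields
\[
(D_i)\operatorname{Dim}\;\cong\;\Z^n\bigl/\,\bigl(\text{integer row span of }\operatorname{Rel}_{D_i}\bigr),
\]
which is exactly the cokernel of the matrix $\operatorname{Rel}_{D_i}$ acting on $\Z^n$. Weak UDAF equivalence of $D_1$ and $D_2$ is, by definition, isomorphism of the two dimension monoids, which in this setting is the same as isomorphism of the two displayed cokernels as abelian groups.

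Finally, invoke the structure theorem for finitely generated abelian groups: the invariant factor decomposition of the cokernel of an integer matrix $M$ is read off from the Smith normal form of $M$, and the Smith normal forms of $M$ and $M^T$ agree. Hence the two cokernels are isomorphic precisely when $\operatorname{Rel}_{D_1}$ and $\operatorname{Rel}_{D_2}$ have the same Smith normal form, giving both directions at once. No serious obstacle appears; the one subtlety I would be careful about is the monoid/semigroup gap handled in the first paragraph, and the reason the equal-size hypothesis cannot be dropped (matrices of different sizes cannot share a Smith normal form even if their cokernels happen to be abstractly isomorphic).
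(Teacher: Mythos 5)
Your overall reduction is the intended one: the paper offers no separate argument for this corollary, and it is meant to follow exactly as you say from Theorem~\ref{UDAF dimension group} and Theorem~\ref{refiner rows} --- the dimension semigroup is $\Z^{V_{D_i}}/\langle\operatorname{Refine}(D_i)\rangle$, the subgroup $\langle\operatorname{Refine}(D_i)\rangle$ is the integer row lattice of $\operatorname{Rel}_{D_i}$, and two square integer matrices of equal size have isomorphic cokernels if and only if they share a Smith normal form (the equal-size hypothesis being exactly what lets you recover the number of unit invariant factors).

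However, your handling of the monoid/semigroup gap contains a false step. You claim that because $f:MS(D_i)\to G_i$ is surjective, the empty multiset is $\sim_{D_i}$-equivalent to some non-empty multiset, and hence $(D_i)\operatorname{Dim}\cong G_i$ as monoids. Surjectivity of $f$ only says every element of $G_i$ is hit by some \emph{non-empty} multiset; it says nothing about the class of the constant-$0$ multiset. In fact the paper explicitly notes, in the proof of Theorem~\ref{congruence def 2}, that the empty multiset is not $\sim_D$-related to any other multiset (rays are non-empty, so no non-empty multiset can be a vertex counter for the empty beam and vice versa). Consequently $(D_i)\operatorname{Dim}=\N^{V_{D_i}}/\sim_{D_i}$ is not $G_i$ but $G_i$ with one extra, externally adjoined identity element; e.g.\ for the $2$-leaf rose $G_i$ is trivial while $(D_i)\operatorname{Dim}$ has two elements. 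The corollary survives because this adjunction is harmless: the adjoined class $[\mathbf{0}]$ is the unique monoid identity, its complement $MS(D_i)/\sim_{D_i}$ is a subsemigroup (sums of non-empty multisets are non-empty and never equivalent to the empty one), and any monoid isomorphism $(D_1)\operatorname{Dim}\to(D_2)\operatorname{Dim}$ must fix identities and therefore restricts to an isomorphism $G_1\to G_2$; the converse direction is immediate. With that substitution your argument is correct, but as written the asserted isomorphism $(D_i)\operatorname{Dim}\cong G_i$ is wrong and the justification offered for it is a non sequitur.
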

\begin{corollary}
For all \(n, m\geq 2\), the n-leaf rose \(R_n\) and m-leaf rose \(R_m\) are weak UDAF equivalent if and only if \(n=m\).
\end{corollary}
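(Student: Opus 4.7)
The plan is to compute the UDAF dimension monoid of $R_n$ explicitly via Theorem~\ref{UDAF dimension group} and observe that its cardinality is $n$, which immediately distinguishes the roses.

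First I would note that $R_n$ has a single vertex $v$ and $n$ self-loops, so it is strongly connected and already equals its core $R_n^\circ$; its canonical relator matrix is the $1\times 1$ matrix $[n-1]$. By Theorem~\ref{UDAF dimension group}, the UDAF dimension semigroup $MS(R_n)/\sim_{R_n}$ is the abelian group with presentation $\langle v \mid (n-1)v=0\rangle_A$, i.e.\ the cyclic group $\Z/(n-1)\Z$, which has $n-1$ elements. Adjoining the class of the empty multiset then yields the UDAF dimension monoid $\N/\sim_{R_n}$. This adjoined class is a singleton (no elementary refinement is applicable at a vertex of count $0$, so the zero multiset lies alone in its $\sim_{R_n}$-class) and it is the identity of the monoid, so it is genuinely a new element. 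The dimension monoid therefore has exactly $n$ elements.

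For the \emph{if} direction, when $n=m$ the digraphs $R_n$ and $R_m$ coincide, so they are trivially weak UDAF equivalent. For the \emph{only if} direction, if $R_n$ and $R_m$ are weak UDAF equivalent then by definition their UDAF dimension monoids are isomorphic, hence of the same cardinality; by the count above this gives $n=m$.

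The argument is essentially immediate from Theorem~\ref{UDAF dimension group}, so there is no substantial obstacle. The only checkpoint worth flagging is the verification that the class of the zero multiset is disjoint from every class of a positive multiset (so it really contributes one extra element to the monoid). This follows straight from the definition of an elementary refinement, which requires a vertex $v$ with $(v)M>0$: starting from the zero multiset no refinement step is available, so its $\sim_{R_n}$-class is $\{0\}$.
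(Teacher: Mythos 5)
Your proof is correct and follows essentially the route the paper intends: the corollary is stated as an immediate consequence of Theorem~\ref{UDAF dimension group}, from which the UDAF dimension semigroup of \(R_n\) is \(\Z/(n-1)\Z\) (relator matrix \([n-1]\)), so the invariants are pairwise non-isomorphic for distinct \(n\ge 2\). Your extra care in passing from the semigroup to the dimension monoid --- checking that the class of the zero multiset is a singleton, which the paper itself confirms in the proof of Theorem~\ref{congruence def 2} --- is a worthwhile detail since weak UDAF equivalence is defined via the monoid, but it does not change the substance of the argument.
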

\section{The matrix viewpoint}
Up to this point we have had a strong focus on digraphs with matrices being used where particularly useful.
However a digraph as we define it is essentially equivalent to its adjacency matrix.
Hence all of the statements in this document can theoretically be converted into the language of matrices.
In this section we make some observations concerning UDAF equivalence of matrices. 
These will be useful in practice when checking UDAF equivalence.
\begin{lemma}\label{elmat1}
If \(D_1\) is an UDAF digraph obtained from another UDAF digraph \(D_2\), via an elementary UDAF folding of the first type, then any matrix obtained from a relator matrix of \(D_2\) by inserting a cross centered at \((i, i)\) (for any \(i\)) with \(-1\) on position \((i, i)\) and zeros elsewhere, is a relator matrix for \(D_1\) (Recall Definition~\ref{elementary UDAF foldings defn}).
\end{lemma}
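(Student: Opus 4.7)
The plan is simply to unwind the definitions, since once one translates the hypothesis into data about the digraphs this is a direct calculation.

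By Definition~\ref{elementary UDAF foldings defn}(1), the statement that $D_1$ is obtained from $D_2$ via an elementary UDAF folding of the first type means that (after identifying $D_2$ with its embedded image in $D_1$) we have $V_{D_1}=V_{D_2}\sqcup\{v^{*}\}$ for some new vertex $v^{*}$, $E_{D_1}=E_{D_2}$, the maps $s_{D_1}$ and $t_{D_1}$ extend $s_{D_2}$ and $t_{D_2}$, and $v^{*}$ lies in the image of neither $s_{D_1}$ nor $t_{D_1}$. In other words, $D_1$ is just $D_2$ with a single extra isolated vertex.

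First I would fix any relator matrix $\rel_{D_2}$ for $D_2$, coming from some bijection $\phi\colon V_{D_2}\to S$, and extend $\phi$ to a bijection $\phi'\colon V_{D_1}\to S\cup\{i\}$ by sending $v^{*}$ to the chosen index $i$. From here I would read off the adjacency, identity and relator matrices for $D_1$ directly from their defining formulas under the labeling $\phi'$. Because $v^{*}$ is incident to no edges, the $i$-th row and column of the adjacency matrix are identically zero; the identity matrix picks up a $1$ at $(i,i)$ and zeros elsewhere along the $i$-th row and column; and every entry of both matrices indexed by a pair in $S\times S$ reproduces the corresponding entry coming from $D_2$. Subtracting, $\adj-I$ for $D_1$ agrees with $\rel_{D_2}$ on the $S\times S$ sub-block and is zero on the $i$-th row and column except for $-1$ at position $(i,i)$, which is exactly the described cross.

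The statement quantifies over \emph{any} relator matrix of $D_2$ and \emph{any} insertion index $i$, but this is handled uniformly by the fact that $\phi$ and $i$ were chosen arbitrarily at the start of the argument. There is no real obstacle here: the only care required is in the bookkeeping of how a cross is ``inserted" at position $i$, which is best encoded by specifying the extended labeling $\phi'$ rather than by trying to describe row and column insertion on indices combinatorially.
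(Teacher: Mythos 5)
Your proposal is correct and is essentially the same argument as the paper's, which simply observes in one sentence that adding an isolated vertex inserts a cross of zeros into the adjacency matrix (hence a cross with $-1$ on the diagonal into the relator matrix); you have just written out the bookkeeping with the extended labeling explicitly.
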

\begin{proof}
This is because adding an isolated vertex adds a cross of zeros centered on the diagonal to the adjacency matrix (the center of the cross is based on how the new vertex is ordered in comparison to the others).
\end{proof}
\begin{example}
Here are some examples of the change of relator matrix as described in Lemma \ref{elmat1}.
\[\left[\begin{smallmatrix}
    a & b\\
    c & d
\end{smallmatrix}\right] \rightarrow \left[\begin{smallmatrix}
    a & 0 & b\\
    0 & -1 & 0\\
    c & 0 & d
\end{smallmatrix}\right]\quad\quad\quad\quad \left[\begin{smallmatrix}
    a & b\\
    c & d
\end{smallmatrix}\right] \rightarrow \left[\begin{smallmatrix}
    a & b & 0\\
    c & d & 0\\
    0 & 0 & -1
\end{smallmatrix}\right]\]
\[\left[\begin{smallmatrix}
    a & b & c\\
    d & e & f\\
    g & h & i
\end{smallmatrix}\right] \rightarrow \left[\begin{smallmatrix}
    a & b &  0&c\\
    d & e &  0&f\\
    0 & 0 & -1 & 0\\
    g & h &  0&i
\end{smallmatrix}\right]\]
\end{example}
\begin{lemma}\label{elmat2}
If \(D_1\) is an UDAF digraph obtained from another UDAF digraph \(D_2\), via an elementary UDAF folding of the second type via an edge \(e\in E_{D_1}\), then the matrix obtained from the canonical relator matrix of \(D_1\) by adding the \((e)t_{D_1}\) row to the \((e)s_{D_1}\) row is the canonical relator matrix for \(D_2\).
\end{lemma}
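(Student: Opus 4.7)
My plan is a direct computation comparing adjacency entries and then passing to relator matrices. First I would unpack Definition~\ref{elementary UDAF foldings defn}(2) in the lemma's setting: there is a folding $f: D_2 \to \Wlk(D_1)$, a non-loop edge $e \in E_{D_1}$, and a subdigraph $S = (V_{D_2}, E_S)$ of $D_2$ such that $f|_S$ is a digraph isomorphism from $S$ to $(V_{D_1}, E_{D_1}\setminus\{e\})$, while $f_E|_{E_{D_2}\setminus E_S}$ is a bijection onto $\{p \in \Hom(\Z_{[0,2]}, D_1): (0,1)p_E = e\}$. In particular $f_V$ is a bijection $V_{D_2} \to V_{D_1}$, so I may identify the two vertex sets throughout. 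Writing $s = (e)s_{D_1}$ and $t = (e)t_{D_1}$, this description partitions $E_{D_2}$ into an ``old'' pool matching $E_{D_1}\setminus\{e\}$ edge-for-edge and a ``new'' pool in bijection with the edges of $D_1$ leaving $t$, every member of the new pool having source $s$ (inherited from $e$) and target matching that of the corresponding $t$-out-edge of $D_1$.

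Second, I would read off the resulting adjacency entries: for any $v, w \in V_{D_1}$,
\[
(v,w)\adj_{D_2}
  \;=\; (v,w)\adj_{D_1} \;-\; [v=s \text{ and } w=t] \;+\; [v=s]\cdot(t,w)\adj_{D_1}.
\]
Thus rows indexed by $v \neq s$ are unchanged, while
\[
(s,w)\adj_{D_2} \;=\; (s,w)\adj_{D_1} + (t,w)\adj_{D_1} - [w=t].
\]
Passing from adjacency to relator matrices amounts to subtracting $[v=w]$ everywhere. Applying this to row $s$ on both sides, the stray $-[w=t]$ is exactly what I need to absorb $(t,w)\adj_{D_1}$ into $(t,w)\rel_{D_1} = (t,w)\adj_{D_1} - [w=t]$, yielding
\[
(s,w)\rel_{D_2} \;=\; (s,w)\rel_{D_1} + (t,w)\rel_{D_1},
\]
while all other rows of $\rel_{D_2}$ coincide with those of $\rel_{D_1}$. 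This is exactly the operation ``add row $t$ to row $s$'' applied to $\rel_{D_1}$, which is the claim.

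I do not anticipate a real obstacle; the whole argument is bookkeeping with finite edge counts. The only point that wants care is the diagonal correction when converting $\adj$ to $\rel$, and this matches up cleanly precisely because the non-loop hypothesis guarantees $s \neq t$, so the Kronecker deltas $[w=s]$ and $[w=t]$ sit in different coordinates and the row identification is unambiguous.
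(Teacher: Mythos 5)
Your proposal is correct and follows essentially the same route as the paper: both compute the effect of a push from \(e\) on the adjacency matrix (delete \(e\), i.e.\ subtract \(1\) from the \(((e)s_{D_1},(e)t_{D_1})\) entry, and add row \((e)t_{D_1}\) to row \((e)s_{D_1}\)), and then observe that the \(-1\) on the diagonal of row \((e)t_{D_1}\) of the relator matrix exactly absorbs the deleted edge, so the whole change is a single row addition. Your version just makes the paper's terse bookkeeping explicit with Kronecker deltas; no gap.
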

\begin{proof}
The effect of elementary UDAF foldings of the second type is to remove the edge \(e\) (subtracting 1 from the \(((e)s_{D_1}, (e)t_{D_1})\) entry of the canonical adjacency matrix), and add a new edge starting at \((e)s_{D_1}\) for each edge starting at \((e)t_{D_1}\) (adding the \((e)t_{D_1}\) row to the \((e)s_{D_1}\) row of the canonical adjacency matrix).
Thus in the relator matrix we can simply add the appropriate row to the appropriate row as the deletion of the used edge is accounted for by the fact that \(1\) has been removed from the corresponding entry of the \((e)t_{D_1}\) row.
\end{proof}
\begin{example}
Here are some examples of the change of relator matrix as described in Lemma \ref{elmat2}.
\[\left[\begin{smallmatrix}
    a & b\\
    c & d
\end{smallmatrix}\right] \rightarrow \left[\begin{smallmatrix}
    a & b\\
    c + a& d + b
\end{smallmatrix}\right]\quad\quad \quad \left[\begin{smallmatrix}
    a & b & c\\
    d & e & f\\
    g & h & i
\end{smallmatrix}\right] \rightarrow \left[\begin{smallmatrix}
    a & b & c\\
    d + g& e + h & f + i\\
    g & h & i
\end{smallmatrix}\right]\]
\end{example}
\begin{lemma}\label{elmat3}
If \(D_1\) is an UDAF digraph obtained from another UDAF digraph \(D_2\), via an elementary UDAF folding of the third type via an edge \(e\in E_{D_1}\), then the matrix obtained from the canonical relator matrix of \(D_1\) by adding the \((e)t_{D_1}\) column to the \((e)s_{D_1}\) column is the canonical relator matrix for \(D_2\).
\end{lemma}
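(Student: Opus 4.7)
The plan is to mirror the proof of Lemma~\ref{elmat2}, exploiting the source-target duality between pushes (type~\(2\)) and pulls (type~\(3\)): where a push modifies a row of the adjacency matrix, a pull modifies a column. First I would unpack Definition~\ref{elementary UDAF foldings defn}(3) to catalogue precisely which edges are added and removed in passing from \(D_1\) to \(D_2\); the vertex sets are identified via \(f\restriction_S\), the edge \(e\) itself is removed, and one new edge is inserted from \((e')s_{D_1}\) to \((e)t_{D_1}\) for each \(e' \in E_{D_1}\) ending at \((e)s_{D_1}\).

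The key observation is that every newly inserted edge has the single common target \((e)t_{D_1}\), and the sources of these new edges are precisely (counted with multiplicity) the sources of the edges of \(D_1\) ending at \((e)s_{D_1}\). Hence only one column of the canonical adjacency matrix is affected: writing \(s := (e)s_{D_1}\) and \(t := (e)t_{D_1}\), for each \(v \in V_{D_1}\) one has
\[(v, t)\adj_{D_2} = (v, t)\adj_{D_1} + (v, s)\adj_{D_1} - [v = s],\]
while all other entries agree. The middle summand counts the newly inserted pulled edges with source \(v\), and the final indicator subtracts the contribution of the deleted edge \(e\).

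To translate to the relator matrix \(\rel = \adj - I\) I would invoke exactly the same free-bookkeeping observation used in the proof of Lemma~\ref{elmat2}, dualised from rows to columns. The lone \(-1\) needed to delete \(e\) is already present as the diagonal entry \((s, s)\rel_{D_1} = (s, s)\adj_{D_1} - 1\) of the relevant source column of \(\rel_{D_1}\), and so the pure column operation on \(\rel_{D_1}\) prescribed by the statement absorbs the deletion correction and reproduces \(\rel_{D_2}\) without any further adjustments.

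The main obstacle is purely notational: one must keep careful track both of the direction in which the operation is being applied (from \(D_1\) to \(D_2\), following the convention of Lemma~\ref{elmat2}) and of which of the two columns involved is the one that actually changes. Once one observes that the push/pull duality corresponds to transposition of the adjacency matrix---so that ``push modifies the source row'' dualises to ``pull modifies the target column''---the remaining verification reduces to the same short linear-algebraic computation as in Lemma~\ref{elmat2}, performed on the single affected column.
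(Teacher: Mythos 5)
Your proposal is correct and takes essentially the same route as the paper, which disposes of this lemma in one line as ``symmetric with Lemma~\ref{elmat2}''; you have simply written out the dual row-to-column bookkeeping explicitly, including the correct observation that the \(-1\) on the diagonal of the source column of the relator matrix absorbs the deletion of \(e\). One small point worth flagging: your computation shows that the \emph{target} column is the one that changes (i.e.\ the source column is added to the target column), which agrees with the paper's worked examples and with operation (3) of Theorem~\ref{matequivth} but not with the literal wording of the lemma statement, whose roles of \((e)s_{D_1}\) and \((e)t_{D_1}\) appear to be transposed.
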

\begin{proof}
This is symmetric with Lemma~\ref{elmat2}.
\end{proof}
\begin{example}
Here are some examples of the change of relator matrix as described in Lemma \ref{elmat3}.
\[\left[\begin{smallmatrix}
    a & b\\
    c & d
\end{smallmatrix}\right] \rightarrow \left[\begin{smallmatrix}
    a & b + a\\
    c & d + c
\end{smallmatrix}\right]\quad\quad \quad \left[\begin{smallmatrix}
    a & b & c\\
    d & e & f\\
    g & h & i
\end{smallmatrix}\right] \rightarrow \left[\begin{smallmatrix}
    a & b + c& c\\
    d & e + f& f\\
    g & h + i & i
\end{smallmatrix}\right]\]
\end{example}

\begin{defn}[UDAF relator matrices]
We say a matrix \(M:S^2\to \Z\) is an \textit{UDAF relator matrix} if it is a relator matrix for an UDAF digraph.
Moreover, we say that two UDAF relator matrices \(A:\{0, 1, \ldots, n-1\}^2\to \N, B:\{0, 1, \ldots, m-1\}^2\to \N\) are \textit{UDAF relator equivalent}, if they are relator matrices for UDAF digraphs which are isomorphic in UDAFG.
\end{defn}
\begin{theorem}[UDAF equivalence of matrices]\label{matequivth}
Let \(A\) and \(B\) be UDAF relator matrices. The matrices \(A\) and \(B\) are UDAF relator equivalent if and only if there is a sequence
\[A=M_0, M_1, \ldots, M_{n}= B,\]
of UDAF relator matrices such that each \(M_i\) can be obtained from \(M_{i-1}\) by doing one of the following:
\begin{enumerate}
    \item Adding a cross centered on the diagonal with \(-1\) on the diagonal and zeros elsewhere.
    \item Adding row \(j\) to row \(i\) when \(i\neq j\) and the \((i, j)\) entry is non-zero.
    \item Adding column \(i\) to column \(j\) when \(i\neq j\) and the \((i, j)\) entry is non-zero. 
    \item The inverse of any of the above operations.
    \item Reordering/relabeling the vertices of the digraph. In other words, suppose that \(M_i:V_i^2\to \Z\) and \(M_{i+1}:V_{i+1}^2\to \Z\) are matrices representing digraphs with vertex sets \(V_i\) and \(V_{i+1}\) respectively.
    Find a bijection \(b:V_i\to V_{i+1}\) such that for all \(v,w\in V_i\) we have \((v, w)M_i= ((v)b, (w)b)M_{i+1}\).
\end{enumerate}
\end{theorem}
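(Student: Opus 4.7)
The plan is to prove the two directions separately, with the easy direction leveraging the lemmas already established in this section and the hard direction reducing to the fact that UDAFG is generated by elementary UDAF foldings.

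For the $(\Leftarrow)$ direction, I would argue that each of the five listed operations, when applied to a relator matrix for an UDAF digraph $D$, produces a relator matrix for an UDAF digraph that is UDAFG-isomorphic (in the sense of $(\rt(\cdot),\cdot)$) to $D$. Operation (1) corresponds to an elementary UDAF folding of the first type by Lemma~\ref{elmat1}. Operations (2) and (3) correspond, in reverse, to elementary UDAF foldings of the second and third types respectively by Lemmas~\ref{elmat2} and~\ref{elmat3}; the requirement that the $(i,j)$ entry be nonzero is exactly the assertion that the edge $e$ used to define such a folding exists in the digraph (since for $i\neq j$ the $(i,j)$ entry of a relator matrix equals the $(i,j)$ entry of the adjacency matrix). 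Operation (4) gives the inverses of these, and operation (5) merely re-expresses the same digraph using a different labelling of its vertex set, which obviously preserves the UDAFG object. Composing gives UDAF relator equivalence.

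For the $(\Rightarrow)$ direction, suppose $A$ and $B$ are relator matrices for UDAF digraphs $D_A$ and $D_B$ that are UDAFG-isomorphic. By Corollary~\ref{big groupoid} we have $\mathrm{UDAFG}=\mathrm{UDAFG}'$, so the given isomorphism factors as $\phi_{n*}^{\pm 1}\circ\cdots\circ\phi_{1*}^{\pm 1}$ where each $\phi_k$ is an elementary UDAF folding between UDAF digraphs $D_{k-1}\to D_k$ (or vice versa). I would then track a relator matrix $M_k$ for each intermediate $D_k$ using the three lemmas of this section: a folding of type one induces an operation of form (1) (choosing the position of the diagonal cross so as to match the chosen labelling of the new vertex), a folding of type two induces the inverse of an operation of form (2), and a folding of type three induces the inverse of an operation of form (3). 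At each stage one may need to apply operation (5) to reconcile the vertex labelling chosen for the ambient matrix with the labelling used for $D_k$; this is always possible because the difference between two relator matrices for the same digraph is exactly a conjugation by a permutation matrix. Finally, at the ends of the sequence, operation (5) converts between the canonical relator matrix of $D_A$ (respectively $D_B$) and the given matrix $A$ (respectively $B$), which again are related by a vertex relabelling since both are relator matrices for the same digraph.

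The main obstacle is bookkeeping: at every step the ``non-zero entry'' hypotheses on operations (2) and (3) must be verified, and one must confirm the intermediate $M_k$ really is an UDAF relator matrix rather than an arbitrary integer matrix. The first issue is handled by the observation above identifying non-zero off-diagonal entries with actual edges of the underlying digraph, so whenever an elementary UDAF folding of the second or third type is performed the relevant edge exists and thus the corresponding matrix operation is legal. The second issue is automatic because elementary UDAF foldings always produce UDAF digraphs from UDAF digraphs (by Definition~\ref{elementary UDAF foldings defn}) and a vertex relabelling of an UDAF digraph is an UDAF digraph, so each $M_k$ is the canonical relator matrix, up to a permutation, of some UDAF digraph.
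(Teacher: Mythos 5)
Your proposal is correct and takes essentially the same route as the paper, whose proof is simply the one-line observation that the theorem follows from Lemmas~\ref{elmat1}, \ref{elmat2}, and \ref{elmat3} together with Corollary~\ref{big groupoid}. You use exactly these four ingredients in exactly the expected way, merely making explicit the bookkeeping (vertex relabelling via operation (5), the identification of non-zero off-diagonal entries with edges, and the fact that intermediate digraphs remain UDAF digraphs) that the paper leaves implicit.
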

\begin{proof}
This follows from Lemma~\ref{elmat1}, Lemma~\ref{elmat2}, and Lemma~\ref{elmat3} together with Corollary~\ref{big groupoid}.
\end{proof}
\begin{corollary}\label{UDAF det}
If \(A\) is an \(n\times n\) UDAF relator matrix and \(B\) is an \(m\times m\) UDAF relator equivalent UDAF relator matrix, then \(\det(A) = (-1)^{n-m}\det(B)\).
\end{corollary}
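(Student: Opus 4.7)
The plan is to reduce to Theorem~\ref{matequivth} and verify that the quantity \((-1)^k \det(M)\), where \(k\) is the size of a square matrix \(M\), is invariant under each of the five operations listed there. Since UDAF relator equivalence is generated by sequences of these operations, this invariance immediately gives \((-1)^n \det(A) = (-1)^m \det(B)\), which is exactly the claimed identity.

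I would check the five operations in turn. For operations (2) and (3) (adding one row or column to another), the size is unchanged and the operation is a standard elementary row/column operation, so the determinant is preserved. For operation (5) (relabeling vertices), the new matrix is \(P^{-1}MP\) for a permutation matrix \(P\), so the determinant is preserved and the size is unchanged. For operation (4) (the inverses), subtracting one row or column from another is again an elementary operation preserving determinant, and removing a cross is handled symmetrically to adding one. The only non-trivial case is operation (1): inserting a new row and column at position \(i\) whose only nonzero entry is a \(-1\) at the diagonal position \((i,i)\). Expanding the determinant of the resulting \((k{+}1)\times(k{+}1)\) matrix along this new row gives
\[\det(M') = (-1) \cdot (-1)^{i+i} \det(M) = -\det(M),\]
so \((-1)^{k+1}\det(M') = (-1)^{k+1}(-\det(M)) = (-1)^k \det(M)\), and the invariant is preserved.

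No real obstacle is anticipated, since each case is a short computation and Theorem~\ref{matequivth} already packages all of the work of reducing UDAF equivalence to these five atomic operations. The only mild bookkeeping point is to confirm that the sign \((-1)^{i+i} = 1\) in operation (1) is independent of where in the matrix the cross is inserted, which it is because \(2i\) is always even.

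\begin{proof}
By Theorem~\ref{matequivth}, it suffices to show that for each of the five operations (1)--(5), if \(M'\) is obtained from the \(k \times k\) matrix \(M\) by one application of the operation and \(M'\) is \(k' \times k'\), then \((-1)^k \det(M) = (-1)^{k'}\det(M')\). Operations (2), (3), and (5) preserve both the size and the determinant (they are either elementary row/column operations or conjugation by a permutation matrix). Operation (4) consists of inverses of (1), (2), (3), so it suffices to handle (1). For (1), \(k' = k+1\) and \(M'\) has a row with a single nonzero entry, namely \(-1\) at a diagonal position \((i,i)\); cofactor expansion along this row yields \(\det(M') = (-1)(-1)^{2i}\det(M) = -\det(M)\), and hence \((-1)^{k'}\det(M') = (-1)^{k+1}(-\det(M)) = (-1)^k \det(M)\). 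Applying this invariance along a sequence of operations linking \(A\) to \(B\) gives \((-1)^n \det(A) = (-1)^m \det(B)\), i.e.\ \(\det(A) = (-1)^{n-m}\det(B)\).
\end{proof}
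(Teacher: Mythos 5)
Your proof is correct and follows essentially the same route as the paper: reduce to Theorem~\ref{matequivth} and observe that operation (1) negates the determinant while changing the size by one, and operations (2), (3), (5) preserve both. Your version just spells out the cofactor expansion and the invariant \((-1)^k\det(M)\) more explicitly than the paper does.
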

\begin{proof}
From Theorem~\ref{matequivth}, \(A\) can be obtained from \(B\) by a sequence of processes described there. 
As the determinant of a matrix is negated by operation \(1\) and preserved by operations \(2, 3, 5\), the result follows.
\end{proof}

\begin{example}
There are strongly connected UDAF digraphs which are weak UDAF equivalent but not strong UDAF equivalent. 
From Corollary~\ref{UDAF det} the digraphs with the below relator matrices are not strong UDAF equivalent, but they are weak UDAF equivalent as their UDAF dimension semigroups are both trivial (as the rows generate \(\Z^2\), see Theorem~\ref{UDAF dimension group}).
\[
\left[\begin{smallmatrix}
    0 & 1 \\
    1 & 0 
\end{smallmatrix}\right]\text{ and }
\left[\begin{smallmatrix}
    2 & 1 \\
    1 & 1
\end{smallmatrix}\right].\]
\end{example}
\begin{lemma}\label{safemats}
Suppose that \(M\) is a square matrix with at least 2 rows and non-negative integer entries.
If \(M\) is a relator matrix of a strongly connected digraph, then \(M\) is an UDAF relator matrix (corresponding to a strongly connected UDAF digraph).
\end{lemma}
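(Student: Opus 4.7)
The plan is to verify the three defining conditions of an UDAF digraph directly for the digraph $D$ whose relator matrix is $M$. Condition (1), finiteness, is immediate from the fact that $M$ is a finite matrix with integer entries. The strategy for conditions (2) and (3) is to exploit the fact that the non-negativity hypothesis forces every vertex to carry a loop, and then combine this with strong connectivity to exhibit a branching in both the forward and backward directions.

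Concretely, since $M$ has non-negative entries and $\adj_D = M + I_D$, every diagonal entry of $\adj_D$ satisfies $(v,v)\adj_D \geq 1$, so each vertex $v$ of $D$ supports at least one loop edge. The constant walk along this loop already witnesses an infinite forward walk from $v$. To show condition (2) it therefore suffices to produce a second, distinct infinite forward walk from each $v$. Because $D$ has at least two vertices and is strongly connected, there exists a vertex $w \neq v$ and hence a walk from $v$ to $w$; such a walk must, at some point, traverse an edge from $v$ to a vertex other than $v$, giving a first edge differing from the loop. Since $D$ is strongly connected and finite, every edge extends to an infinite forward walk (concatenate with any return walk to a vertex from which one can walk indefinitely). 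This supplies a second infinite forward walk from $v$, and so
\[\left|\makeset{\phi \in \Hom(\Z_{[0,\infty]},D)}{\((0)\phi_V = v\)}\right| \geq 2 \neq 1,\]
establishing condition (2).

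Condition (3) follows by the entirely symmetric argument: each vertex has a loop (giving one infinite backward walk) and, by strong connectivity with $|V_D| \geq 2$, admits an incoming edge from a vertex other than itself, which extends backward to infinity. Thus all three conditions of Definition~\ref{udaf digraph defn} hold and $D$ is an UDAF digraph; since we began with a strongly connected $D$, it is moreover strongly connected.

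There is no serious obstacle here; the only point requiring a moment's care is justifying that a given edge can be extended to a bi-infinite walk, which is a standard consequence of strong connectivity of a finite digraph (pigeonhole gives a cycle through any vertex, and strong connectivity lets one concatenate freely). The non-negativity hypothesis on $M$ does the essential work by guaranteeing a loop at every vertex, which is what prevents the pathological ``unique tail'' behavior ruled out by conditions (2) and (3).
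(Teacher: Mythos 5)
Your proof is correct and follows essentially the same route as the paper's: non-negativity of the relator matrix forces a loop at every vertex, and strong connectivity together with $|V_D|\geq 2$ supplies a second cycle (hence a second infinite forward and backward walk) through each vertex, verifying conditions (2) and (3) of the UDAF digraph definition. Your write-up merely spells out in more detail what the paper's one-sentence proof leaves implicit.
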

\begin{proof}
As the entries of \(M\) are non-negative, a digraph with relator matrix \(M\) has at least one loop at each vertex, moreover as the digraph is strongly connected each vertex must also belong to another cycle. 
\end{proof}
\begin{lemma}\label{safematops}
If \(A\) is a relator matrix for a strongly connected UDAF digraph, with no negative entries, then adding a row of \(A\) to another row of \(A\) will result in an UDAF relator equivalent UDAF relator matrix. 
Moreover, the analogous statement using columns instead of rows is also true. 
\end{lemma}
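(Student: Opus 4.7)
The plan is to realize the desired row addition as a composition of operations sanctioned by Theorem~\ref{matequivth}. If the $(i,j)$ entry of $A$ is already nonzero, i.e.\ there is a direct edge from vertex $i$ to vertex $j$, then operation 2 of Theorem~\ref{matequivth} applies immediately and we are done. So assume the $(i,j)$ entry is zero; strong connectivity then supplies a directed path $i = v_0, v_1, \ldots, v_k = j$ of length $k \geq 2$, and I would use this path as a scaffold to realize the row addition.

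The first phase is to sequentially add $R_{v_1}, R_{v_2}, \ldots, R_{v_{k-1}}, R_{v_k} = R_j$ to row $i$, where $R_v$ denotes the $v$-th row of $A$. Before the $l$-th addition the current $(i, v_l)$ entry is $(i, v_l)_A + \sum_{m=1}^{l-1}(v_m, v_l)_A$, which is strictly positive because the path edge $(v_{l-1}, v_l)$ contributes at least $1$ (the $l=1$ case using $(i, v_1)_A \geq 1$); thus each addition is a legitimate instance of operation 2 of Theorem~\ref{matequivth}. After all $k$ additions, row $i$ equals $R_i + R_{v_1} + \cdots + R_{v_{k-1}} + R_j$.

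The second phase is to peel off the scaffolding by sequentially subtracting $R_{v_{k-1}}, R_{v_{k-2}}, \ldots, R_{v_1}$ from row $i$, using inverses of operation 2. After subtracting $R_{v_l}$, the resulting $(i, v_l)$ entry equals $(i, v_l)_A + \sum_{1 \leq m < l}(v_m, v_l)_A + (j, v_l)_A$, which is still strictly positive by the same path-edge argument; every other off-diagonal entry of the modified row $i$ is a non-negative combination of non-negative entries of $A$ and hence remains non-negative. The final row $i$ equals $R_i + R_j$, exactly the desired result.

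The main technical point to verify is that every intermediate matrix is a genuine UDAF relator matrix, rather than merely a matrix with non-negative entries. Since only row $i$ is modified, all other vertices retain their original out-edges, and row $i$ always contains $R_i$ as a summand, so the out-degree of $i$ is at least its original value in $A$; likewise, in-edges at each vertex from $i$ can only weakly increase compared to $A$, which lets conditions (2) and (3) of Definition~\ref{udaf digraph defn} propagate at each step. The column statement is proved by the completely symmetric argument, using Lemma~\ref{elmat3} and operation 3 in place of Lemma~\ref{elmat2} and operation 2.
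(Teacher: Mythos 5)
Your proof is correct, but it follows a genuinely different route from the paper's. You keep every modification confined to row \(i\): you march along a (simple) path from \(i\) to \(j\), accumulating \(R_{v_1},\ldots,R_{v_{k-1}},R_j\) into row \(i\) and then peeling the scaffolding back off, so that the sequence of legal operations carries \(A\) \emph{directly} to the target matrix. The paper instead cascades along the path, adding each row into the \emph{next} row of the path (so the intermediate rows \(v_1,\ldots,v_{k-1}\) are temporarily corrupted and then restored), and it never reaches the target matrix \(C\) directly: it shows that both \(A\) and \(C\) reduce to a common third matrix \(B\) whose target row carries the sum of \emph{all} the path rows, and concludes by symmetry and transitivity of the equivalence. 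Your version is arguably easier to audit --- since only row \(i\) ever changes and it always dominates \(R_i\) entrywise, non-negativity and strong connectivity of every intermediate matrix are immediate, and the pivot entry needed for operation 2 (or its inverse) is always witnessed by the single path edge \((v_{l-1},v_l)\). Two small polishing remarks: you should say explicitly that the path is taken to be simple (the paper takes \(p\) injective for the same reason), since otherwise a repeated vertex could make a "diagonal" pivot entry a relator-matrix entry rather than an adjacency entry; and for the claim that the intermediate matrices are UDAF relator matrices you can simply cite Lemma~\ref{safemats} (non-negative relator matrix of a strongly connected digraph with at least two rows), which is exactly the paper's justification and is cleaner than propagating conditions (2) and (3) of Definition~\ref{udaf digraph defn} by hand, since "\(\neq 1\)" is not monotone under adding edges in general and only works here because every vertex already has a loop and lies on a second cycle.
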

\begin{proof}
Assume without loss of generality that \(A\) is the canonical relator matrix for a strongly connected UDAF digraph \(D\). 
Suppose that \(v,w\in V_D\) and we want to add row \(v\) to row \(w\). Let \(p:\Z_{[0, n]}:\to D\) be an injective homomorphism with \((0)p_V= v\) and \((n)p_V= w\). Then we do the following sequence of operations as allowed by Theorem~\ref{matequivth}:

\begin{enumerate}
    \item Add row \((0)p_V\) to row \((1)p_V\), then add row \((1)p_V\) to row \((2)p_V\), \ldots, then add row \((n-1)p_V\) to row \((n)p_V\).
    \item Subtract row \((n-2)p_V\) from row \((n-1)p_V\), \ldots, then subtract row \((0)p_V\) from row \((1)p_V\).
\end{enumerate}

Let \(B\) be the resulting matrix. The matrix \(B\) is the same as \(A\) except with all of the rows \((0)p_V, (1)p_V,  \ldots, (n-1)p_V\) added to row \((n)p_V\).

Let \(C\) be the matrix obtained from \(A\) by adding row \((0)p_V\) to row \((n)p_V\). Perform the following operations to \(C\):
\begin{enumerate}
    \item Add row \((1)p_V\) to row \((2)p_V\), then add row \((2)p_V\) to row \((3)p_V\), \ldots, then add row \((n-1)p_V\) to row \((n)p_V\).
    \item Subtract row \((n-2)p_V\) from row \((n-1)p_V\), \ldots, then subtract row \((1)p_V\) from row \((2)p_V\).
\end{enumerate}
The matrix obtained is the same as \(C\) except with all of the rows \( (1)p_V,  \ldots, (n-1)p_V\) to row \((n)p_V\), so the matrix obtained is \(B\). 

As long as all the intermediate matrices are UDAF relator matrices, all row operations performed preserve the UDAF relator equivalence type of the matrices by Theorem~\ref{matequivth}.
This is true by Lemma~\ref{safemats}. Thus \(A\) and \(C\) are both equivalent to \(B\), and are hence equivalent to each other as required.
\end{proof}
\begin{lemma}\label{delting crosses}
Suppose that \(A\) is an UDAF relator matrix with a row/column of all 0's except for its diagonal entry which is \(-1\). 
The matrix obtained from this matrix by deleting the row/column together with the column/row containing the \(-1\) diagonal entry is UDAF relator equivalent to \(A\).
\end{lemma}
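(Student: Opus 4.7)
The plan is to interpret the matrix statement digraph-theoretically and prove the digraph-level claim via a direct embedding, rather than chaining together the elementary matrix moves of Theorem~\ref{matequivth}. Treating the ``row'' case (the column case is entirely symmetric), I view $A$ as the canonical relator matrix of an UDAF digraph $D$; the hypothesis that some row of $A$ is zero except for a $-1$ on its diagonal translates into the existence of a vertex $v \in V_D$ with no outgoing edges. Let $D'$ be the subdigraph of $D$ obtained by deleting $v$ together with every edge whose target is $v$. Its canonical relator matrix is precisely the matrix produced from $A$ by deleting row $v$ and column $v$, so it suffices to prove that $D$ and $D'$ are isomorphic in UDAFG.

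Next I would confirm that $D'$ is itself an UDAF digraph. The key observation is that $v$ and its incoming edges cannot appear in any infinite walk of interest: since $v$ has no out-edges, any edge $y\to v$ is a dead end, so no infinite forward walk from any $u\neq v$ uses such an edge, and (by the same reasoning) no infinite backward walk ending at $u\neq v$ ever visits $v$. Hence the sets of infinite forward walks from $u$ and of infinite backward walks ending at $u$ agree in $D$ and in $D'$, and conditions (2) and (3) of Definition~\ref{udaf digraph defn} for $D'$ follow immediately from those for $D$. In particular, the cores coincide: $D^\circ = (D')^\circ$.

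The heart of the argument is then that the inclusion $j:D' \hookrightarrow D$, composed with the canonical embedding $D \hookrightarrow \Wlk(D)$, is an UDAF folding. Since every non-degenerate generalised bi-infinite walk in $D$ already takes values in $\Wlk(D^\circ)=\Wlk((D')^\circ)\subseteq \Wlk(D')$, the sets $\rt(D)$ and $\rt(D')$ coincide under the canonical identification, and $j_*$ is exactly this identification, in particular a bijection. Thus $j$ is an UDAF folding by definition, so by Theorem~\ref{udaf isomorphisms theorem} the map $j_*$ is an UDAFG isomorphism, showing $D$ and $D'$ are isomorphic in UDAFG. By the definition of UDAF relator equivalence, $A$ and its truncation are therefore UDAF relator equivalent.

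The only real obstacle I foresee is the temptation to prove this directly by exhibiting a sequence of elementary operations from Theorem~\ref{matequivth}: the natural attempt is to clear column $v$ by repeatedly applying operation (2) (``add row $v$ to row $w$'' when $(w,v)\ne 0$), using that row $v$ equals $-\chi_{D,v}$ to decrement incoming-edge multiplicities one at a time. This quickly becomes awkward, because one must check that each intermediate matrix still represents an UDAF digraph, and a priori the count of infinite backward walks ending at $v$ might pass through the forbidden value $1$ while edges are being stripped. The indirect argument through $j_*$ above neatly avoids all such intermediate bookkeeping.
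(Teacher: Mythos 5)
Your proposal is correct and is essentially the paper's own argument in expanded form: the paper likewise observes that the hypothesis means the corresponding digraph has a vertex with no outgoing (or no incoming) edges, which therefore lies on no bi-infinite walk and can be deleted without changing the strong UDAF equivalence type. Your write-up merely fills in the details (that the cores and route sets coincide, and that the inclusion is an UDAF folding) that the paper leaves as a two-sentence sketch.
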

\begin{proof}
This happens precisely when the matrix corresponds to a digraph possessing a vertex which either has no outgoing edges or no incoming edges. 
These vertices are involved in no bi-infinite walks so can be safely deleted with out changing strong UDAF equivalence type.
\end{proof}

\section{Comparing equivalences}
We have introduced two types of equivalence for UDAF digraphs: weak UDAF equivalence and strong UDAF equivalence. 
We have also discussed strong shift equivalence which is stronger than strong UDAF equivalence (Corollary \ref{SSE implies UDAF cor}). 

In this section we make some observations about shift equivalence. 
Once these are considered we obtain the following diagram showing the relationship between the types of equivalence discussed.

\vspace*{30pt}\begin{rotate}{00}\hspace*{-27pt}\(\text{Weak UDAF Equivalence}\)

\begin{rotate}{-45}\(\quad\mathlarger{\mathlarger{\mathlarger{\Leftarrow}}}\quad
\)\begin{rotate}{45}\(\text{Strong UDAF Equivalence}
\)\begin{rotate}{45}\(\quad\mathlarger{\mathlarger{\mathlarger{\Leftarrow}}}\quad
\)\begin{rotate}{-45}\(\text{Strong Shift Equivalence}.
\)\end{rotate}\end{rotate}\end{rotate}\end{rotate}

 \begin{rotate}{45}\(\quad\mathlarger{\mathlarger{\mathlarger{\Leftarrow}}}\quad
\)\begin{rotate}{-45}\hspace*{12pt} \(\text{Shift Equivalence}
\)\hspace*{12pt} \begin{rotate}{-45}\(\quad\mathlarger{\mathlarger{\mathlarger{\Leftarrow}}}\quad
\)\end{rotate}\end{rotate}\end{rotate}
\hspace*{65pt}\(\mathlarger{\mathlarger{\mathlarger{\not\Uparrow}}}\)
\end{rotate}
\vspace*{30pt}

Notably the answer to the following question is not given in the above diagram.
\begin{question}
Does shift equivalence imply strong UDAF equivalence?
\end{question}

If this implication does not hold then strong UDAF equivalence can serve as a tool to help identify digraphs which are strong shift equivalent but not shift equivalent. 
However, we unfortunately do not currently know the answer to this question.

We now proceed with the shift equivalence discussion. 
We shall define shift equivalence via dimension modules, for background on this viewpoint see section 7.5 of \cite{lind2021introduction}.
\begin{defn}\label{dimension module defn}
Let \(D\) be a finite digraph. For the purposes of this definition, we view elements of \(\mathbb{Q}^{V_{D}}\) as row vectors.
The subgroup
\[\mathcal{G}_D= \makeset{v\in \mathbb{Q}^{V_{D}}}{\(v \adj_{D}^i\in \Z^{V_{D}}\) for some \(i\in \N\) }\cap \bigcap_{i\in \N} (\mathbb{Q}^{V_{D}})\adj_{D}^i\]
of \(\mathbb{Q}^{V_{D}}\) is call the \textit{dimension group} of \(D\). 
Moreover we define 
\[\mathcal{G}_D^+= \makeset{v\in \mathbb{Q}^{V_{D}}}{\(v \adj_{D}^i\in \N^{V_{D}}\) for some \(i\in \N\) }\cap \bigcap_{i\in \N} (\mathbb{Q}^{V_{D}})\adj_{D}^i.\]
The \textit{dimension module} of \(D\) is defined to be the triple \((\mathcal{G}_D, \mathcal{G}_D^+, \hat{\sigma}_D)\)
where \(\hat{\sigma}_D: \mathcal{G}_D\to \mathcal{G}_D\) is defined by \((v)\hat{\sigma}_D = v \adj_{D}\).
Two finite digraphs are said to be \textit{shift equivalent} if their dimension modules are isomorphic.
\end{defn}
\begin{theorem}
If \(D_1, D_2\) are shift equivalent UDAF digraphs, then they are weak UDAF equivalent.
\end{theorem}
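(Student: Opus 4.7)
The plan is to construct an explicit monoid isomorphism \((D_1)\operatorname{Dim} \cong (D_2)\operatorname{Dim}\) from the shift equivalence data. I would first reduce to the case \(D_i = D_i^\circ\): the UDAF dimension monoid uses only the core by definition, and identifying \(\mathcal{G}_D\) with the direct limit \(\varinjlim(\Z^{V_D}, A_D)\) shows that coordinates at vertices of \(V_D\setminus V_{D^\circ}\) vanish under iteration of \(A_D\) (either immediately, at vertices with no outgoing edges, or after finitely many steps, at vertices with no infinite backward walks, which admit only finitely many walks of bounded length ending there). The core \(D^\circ\) is itself an UDAF digraph: if \(v\in V_{D^\circ}\) and some infinite forward walk in \(D\) from \(v\) passed through a vertex \(w\notin V_{D^\circ}\), then prepending an infinite backward walk ending at \(v\) would exhibit \(w\) on a bi-infinite walk, a contradiction. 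Hence the forward (and, symmetrically, backward) walks in \(D\) and in \(D^\circ\) from vertices of \(V_{D^\circ}\) agree.

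Next, I would invoke the Krieger--Williams matrix characterisation of shift equivalence to obtain nonnegative integer matrices \(R\colon V_{D_1}\times V_{D_2}\to \N\) and \(S\colon V_{D_2}\times V_{D_1}\to \N\), together with an integer \(k\geq 1\), satisfying
\[
A_1 R = R A_2,\quad S A_1 = A_2 S,\quad RS = A_1^k,\quad SR = A_2^k,
\]
where \(A_i:=\adj_{D_i}\). Define monoid homomorphisms \(\phi\colon \N^{V_{D_1}}\to \N^{V_{D_2}}\) by \(M\mapsto MR\) and \(\psi\colon \N^{V_{D_2}}\to \N^{V_{D_1}}\) by \(M\mapsto MS\) (row vector times matrix).

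The main obstacle is showing that \(\phi\) descends to a well-defined monoid homomorphism \(\bar\phi\colon(D_1)\operatorname{Dim}\to(D_2)\operatorname{Dim}\). Suppose \(M' = M + r_v\) is an elementary refinement of \(M\) at a vertex \(v\) with \((v)M\geq 1\), where \(r_v\) denotes the \(v\)-th row of \(A_1 - I\). Using \(A_1 R = R A_2\), compute
\[
M'R - MR \;=\; r_v R \;=\; (e_v R)(A_2 - I) \;=\; \sum_{w\in V_{D_2}} (v,w)R \cdot r'_w,
\]
where \(r'_w\) is the \(w\)-th row of \(A_2 - I\). To realise this change via elementary refinements in \(D_2\) applied to \(MR\), I would process the vertices \(w\in V_{D_2}\) in any fixed order, applying the refinement at \(w\) exactly \((v,w)R\) times. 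Initially \((w)(MR) = \sum_{v'}(v')M \cdot (v',w)R \geq (v)M \cdot (v,w)R \geq (v,w)R\), and since each refinement at \(w\) changes the \(w\)-count by \((A_2)_{ww} - 1 \geq -1\), the \(w\)-count remains \(\geq 1\) throughout all \((v,w)R\) applications. Refinements at later \(w'\neq w\) only increase the counts of vertices distinct from \(w'\), so the procedure completes. A symmetric argument handles the reverse of an elementary refinement, making \(\bar\phi\) well defined.

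Finally, the compositions \(\bar\psi\bar\phi\) and \(\bar\phi\bar\psi\) are induced by multiplication by \(RS = A_1^k\) and \(SR = A_2^k\) respectively, so it suffices to show \(M A_i^k \sim_{D_i} M\) for all \(M\). Iterating reduces this to \(M A_i \sim_{D_i} M\), which holds because \(M A_i - M = \sum_v (v)M \cdot r_v\) is realised by applying the elementary refinement at each \(v\) exactly \((v)M\) times (with counts maintained by the same argument as above). Hence \(\bar\phi\) and \(\bar\psi\) are mutually inverse isomorphisms, establishing weak UDAF equivalence.
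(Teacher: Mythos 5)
Your proof takes a genuinely different route from the paper's. The paper shows that for any UDAF digraph \(D\) the dimension monoid \(\N^{V_{D^\circ}}/\sim_D\) is isomorphic to the canonical quotient \(\mathcal{G}_D^+/\sim_{\sigma_D}\) of the positive cone of the dimension module, so that the dimension monoid becomes manifestly an invariant of the dimension module; you instead pass through the Krieger--Williams matrix formulation of shift equivalence and build the isomorphism directly as \(M\mapsto MR\). Your central computation is sound: since \(\sim_D\) is generated by the pairs \((\chi_{D,v},\chi_{D,v}\adj_{D^\circ})\) (Theorem~\ref{refiner rows}), and each standard \((w,1)\)-refinement lowers the count at \(w\) by at most one while never lowering counts elsewhere, your bookkeeping showing that \(r_vR=(e_vR)(\adj_{D_2^\circ}-I)\) is realised by admissible refinements of \(MR\), and that \(M\adj_{D_i^\circ}^k\sim_{D_i}M\), is correct. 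The trade-off is that you import the equivalence between the dimension-module and matrix formulations of shift equivalence (Theorem 7.5.8 of \cite{lind2021introduction}) as a black box, whereas the paper's route stays internal and additionally yields an explicit description of the dimension monoid in terms of the dimension module.

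There is, however, a genuine gap in your reduction to the cores. You justify replacing \(D_i\) by \(D_i^\circ\) by asserting that the coordinates at vertices of \(V_D\setminus V_{D^\circ}\) ``vanish under iteration of \(A_D\).'' That is false for a non-core vertex admitting infinite backward walks but no infinite forward walk: for the digraph with a doubled loop at vertex \(1\) and a single edge \(1\to 2\), the eventual image of the adjacency matrix is \(\{(a,a/2):a\in\mathbb{Q}\}\), whose second coordinate does not vanish. What you actually need is that such coordinates are \emph{redundant} rather than zero: any element of the eventual image supported only on vertices with no infinite forward walk is annihilated by \(\adj_D^{|V_D|}\), and \(\hat{\sigma}_D\) is injective on the eventual image, so the projection onto the core coordinates is an isomorphism of dimension modules \(\mathcal{G}_D\to\mathcal{G}_{D^\circ}\) (one also checks it respects the positive cones, using that edges between core vertices are automatically core edges and that edges leaving a non-core vertex with an infinite backward walk never re-enter the core). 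With that corrected the reduction, and hence the whole argument, goes through; but note that the paper's own proof devotes most of its length (the maps \(m\mapsto m'\), \(m^\circ\), \(m^-\) and Claims 1--3) to exactly this core/non-core interplay, so the step should not be treated as routine.
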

\begin{proof}
It suffices to show that if \((\mathcal{G}_{D_1}, \mathcal{G}_{D_1}^+, \hat{\sigma}_{D_1})\cong (\mathcal{G}_{D_2}, \mathcal{G}_{D_2}^+, \hat{\sigma}_{D_2})\), then \[(\rt(D_1),D_1)\operatorname{Dim}\cong (\rt(D_2),D_2)\operatorname{Dim}.\]

We show in fact, that if \(D\) is an UDAF digraph then
\((\rt(D),D)\operatorname{Dim} \cong \mathcal{G}_{D}^+/ \sim_{\sigma_D}\)
where \(\sigma_D = \hat{\sigma}_{D}\restriction_{\mathcal{G}_{D}^+}\) and \(\sim_{\sigma_D}\) is the congruence generated by \(\sigma_D\). 

For each \(m\in \mathbb{Q}^{V_{D^\circ}}\), let \(m'\) denote the element of \(\mathbb{Q}^{V_D}\) which agrees with \(m\) on elements of \(V_{D}^\circ\) and is \(0\) elsewhere.
For each \(m\in \mathbb{Q}^{V_D}\), let \(m^\circ\) denote the restriction of \(m\) to \(V_{D^\circ}\) and let \(m^-\) denote \(m-(m^{\circ})'\).

\underline{Claim 1:} The map \(\phi: \N^{V_{D^\circ}}/\sim_D\to \mathcal{G}_{D}^+/ \sim_{\sigma_D}\) given by
\[[m]_{\sim_D}\phi= \left[(m')\adj_{D}^{|V_{D}|}\right]_{\sim_{\sigma_D}}\]
is a well-defined monoid homomorphism.\\
\underline{Proof of Claim 1:}
First note that \((\mathbb{Q}^{V_D})\adj_D^{|V_{D}|}= \bigcap_{i\in \N} (\mathbb{Q}^{V_{D}})\adj_{D}^i\) (the dimension of \((\mathbb{Q}^{V_{D}})\adj_{D}^i\) can decrease at most \(|V_{D}|\) times). 
It therefore suffices to show that if \(m_1\sim_D m_2\), then \((m_1')\adj_D^{|V_{D}|} \sim_{\sigma_D} (m_2')\adj_D^{|V_{D}|}\).
By Theorem~\ref{refiner rows}, the congruence \(\sim_D\) is
generated by the pairs
\((\chi_{D, v}, \chi_{D, v}\adj_{D^\circ})\) for \(v\in V_{D^\circ}\). 
So we need only show that if \(v\in V_{D^\circ}\), then 
\[(\chi_{D, v}')\adj_D^{|V_{D}|} \sim_{\sigma_D} (\chi_{D, v}\adj_{D^\circ})'\adj_D^{|V_{D}|}.\]
Note that for \(w\in V_{D^\circ}\), the \(w\)-entry of both \((\chi_{D, v}\adj_{D^\circ})'\) and \((\chi_{D, v})'\adj_{D}\) are the number of edges in \(D\) from \(v\) to \(w\). In particular \((\chi_{D, v}\adj_{D^\circ})' = (((\chi_{D, v})'\adj_{D})^\circ) '\).
For \(w\in V_D\backslash V_{D^\circ}\), the \(w\)-entry of \((\chi_{D, v})'\adj_{D}\) is the number of edges in \(D\) from \(v\) to \(w\), which can only be non-zero if \(w\) is the end of an infinite backwards walk.
In particular the only non-zero entries of \(((\chi_{D, v})'\adj_{D})^-\) are at vertices which are involved in no infinite forwards walks. 
If a vertex is the start of a walk of length \(|V_D|\), then it must be involved in an infinite forwards walk, so we have \(((\chi_{D, v})'\adj_{D})^-\adj_D^{|V_D|}= 0\). 
Hence
\begin{align*}
    (\chi_{D, v}')\adj_{D}^{|V_{D}|}
    &\sim_{\sigma_D} (\chi_{D, v}')\adj_{D}^{|V_{D}|}\sigma_D= (\chi_{D, v}')\adj_{D}^{|V_{D}|+1}\\
    &=( (((\chi_{D, v})'\adj_{D})^\circ) ' + ((\chi_{D, v})'\adj_{D})^-)\adj_{D}^{|V_{D}|}\\
    &=((\chi_{D, v}\adj_{D^\circ})' + ((\chi_{D, v})'\adj_{D})^-)\adj_{D}^{|V_{D}|}\\
    &=(\chi_{D, v}\adj_{D^\circ})'\adj_{D}^{|V_{D}|} + (((\chi_{D, v})'\adj_{D})^-)\adj_{D}^{|V_{D}|}\\
    &=(\chi_{D, v}\adj_{D^\circ})'\adj_{D}^{|V_{D}|}.\diamondsuit
\end{align*}
We will show that \(\phi\) is an isomorphism.

\underline{Claim 2:}
The map \(\phi\) is surjective and moreover for all \(m\in \mathcal{G}_D^+\), we have \((m^\circ)'\adj_D^{|V_D|} = m\adj_D^{|V_D|}\).\\
\underline{Proof of Claim 2:}
 Let \(m\in \mathcal{G}_D^+\) be arbitrary.
Note that if \(v\in V_D\backslash V_{D^\circ}\) and the \(v\)-entry of \(m^-\) is non-zero, then \(v\) is at the end of an infinite backwards walk in \(D\) (as \(m\in \mathcal{G}_D^+\)). 
In particular no vertex with non-zero entry in \(m^-\) is the start of an infinite forwards walk. Thus \((m^-)\adj_{D}^{|V_D|} = 0\) and hence
\[m\sim_{\sigma_D} (m)\adj_{D}^{|V_D|}=((m^\circ)'+m^-)\adj_{D}^{|V_D|}=((m^\circ)')\adj_{D}^{|V_D|}.\]
Thus if \(m_2\in\N^{V_D}\) is arbitrary with \(m_2\sim_{\sigma_D} m\), then we have \([m]_{\sim_{\sigma_D}}=[m_2]_{\sim_{\sigma_D}} = 
([m_2^\circ]_{\sim_D})\phi\) 
is in the image of \(\phi\) as required.\(\diamondsuit\)

It remains to show the injectivity of \(\phi\).
Let \(m_1, m_2\in \N^{V_D^\circ}\) be arbitrary, and suppose that \((m_1')\adj_D^{|V_D|} \sim_{\sigma_D} (m_2')\adj_D^{|V_D|} \). We show that \(m_1\sim_D m_2\). 
By the definition of \(\sim_{\sigma_D}\), we can find \(n\in \N\) and \(a_0, b_0, c_0, d_0, a_1, b_1, c_1, d_1, \ldots, a_n, b_n, c_n, d_n\in \mathcal{G}_D^+\) such that
\begin{enumerate}
    \item \(a_0 + b_0=(m_1')\adj_D^{|V_D|}, a_n + b_n= (m_2')\adj_D^{|V_D|}\).
    \item for all \(i< n\) there is \(k_{i}, k_{i, 2}> |V_D|\) with \((c_i)\sigma_D^{k_{i, 1}} = (a_{i+1})\sigma_D^{k_{i, 2}}\).
    \item for all \(i\leq n\) we have \(a_i + b_i=c_i + d_i\).
    \item for all \(i< n\) we have \(d_i = b_{i+1}\).
\end{enumerate}
Let \(m\in \N\) be greater that \(|V_D|\) and such that for all \(i\leq n\) we have \(a_i\sigma_D^m, b_i\sigma_D^m, c_i\sigma_D^m, d_i\sigma_D^m \in \N^{V_{D}}\).
Throughout the rest of the proof, we will be using the fact that if \(M\in \N^{V_{D^\circ}}\), then \(((M')\adj_D)^\circ=(M)\adj_{D^\circ}\) (as any vertex used in a walk between elements of \(V_{D^\circ}\) must be a vertex of \(V_{D^\circ}\)).

\underline{Claim 3:} For all \(i< n\), \(((a_i+ b_i) \sigma_D^m)^\circ \sim_D ((a_{i+1}+ b_{i+1}) \sigma_D^m)^\circ\).\\
\underline{Proof of Claim 3:}  Let \(i<n\). We obtain
\begin{align*}
    ((a_i+ b_i) \sigma_D^m)^\circ &=  ((c_i+ d_i) \sigma_D^m)^\circ=  ((c_i)\sigma_D^m)^\circ + ((d_i)\sigma_D^m)^\circ \\
    &\sim_D  ((c_i)\sigma_D^m)^\circ\adj_{D^\circ}^{k_{i, 1}} + ((d_i)\sigma_D^m)^\circ\\
 &=  ((((c_i)\sigma_D^m)^\circ)'\adj_{D}^{k_{i, 1}})^\circ + ((d_i)\sigma_D^m)^\circ\\
 &=  ((c_i)\sigma_D^m\adj_{D}^{k_{i, 1}})^\circ + ((d_i)\sigma_D^m)^\circ \quad\text{by claim 2}\\
     &=  ((c_i)\sigma_D^{m+ k_{i, 1}}))^\circ + ((d_i)\sigma_D^m)^\circ=  ((a_{i+1})\sigma_D^{m+ k_{i, 2}})^\circ + ((d_i)\sigma_D^m)^\circ\\
  &=  ((a_{i+1})\sigma_D^{m+ k_{i, 2}})^\circ + ((b_{i+1})\sigma_D^m)^\circ=  ((a_{i+1})\sigma_D^{m}\adj_D^{k_{i, 2}})^\circ + ((b_{i+1})\sigma_D^m)^\circ\\
   &=  ((((a_{i+1})\sigma_D^m)^\circ)'\adj_{D}^{k_{i, 2}})^\circ + ((b_{i+1})\sigma_D^m)^\circ\quad\text{by claim 2}\\
    &=  ((a_{i+1})\sigma_D^m)^\circ\adj_{D^\circ}^{k_{i, 2}} + ((b_{i+1})\sigma_D^m)^\circ\\
    &\sim_{D}  ((a_{i+1}) \sigma_D^m + (b_{i+1})\sigma_D^m)^\circ=  ((a_{i+1} + b_{i+1})\sigma_D^m)^\circ.\diamondsuit
\end{align*}
Thus \(((a_0 + b_0)\adj_D^m)^\circ \sim_D ((a_n + b_n)\adj_D^m)^\circ\).
We also have
\begin{align*}
    m_1 &\sim_D m_1\adj_{D^\circ}^{|V_D|}\adj_{D^\circ}^m=((m_1')\adj_{D}^{|V_D|})^\circ\adj_{D^\circ}^m= (a_0 + b_0)^\circ\adj_{D^\circ}^m\\
    &= (a_0 + b_0)^\circ\adj_{D^\circ}^m= (((a_0 + b_0)^\circ)'\adj_{D}^m)^\circ= ((a_0 + b_0)\sigma_{D}^m)^\circ
\end{align*}
and similarly \(m_2 \sim_D ((a_n + b_n)\sigma_{D}^m)^\circ\).
Hence \(m_1\sim_D m_2\) as required.
\end{proof}

\begin{remark}
The below digraphs (the two-leaf rose and the golden mean shift) are strong UDAF equivalent but not shift equivalent.
\begin{center}
\begin{tikzpicture}[->,>=stealth',shorten >=1pt,auto,node distance=5cm,on grid, every state/.style={fill=red,draw=none,circular drop shadow,text=white}]

    \node [state, scale = 0.5] (A) at (-3,1) {};

  \node [state, scale = 0.5] (A2) at (1,1) {} ;
  \node [state, scale = 0.5] (B2) at (3,1) {};

 \path [->] (A) edge  [out=160,in=200, loop]
 node  [swap]{\(a\)} (A)
 
 (A) edge  [out=340,in=20, loop]
 node  [swap]{\(b\)} (A)
 
 (A2) edge  [out=160,in=200, loop]
 node  [swap]{\(a\)} (A2)
 (A2) edge [out=20,in=160]
 node {\(b\)} (B2)
 (B2) edge  [out=200,in=340]
 node {\(c\)} (A2);
 \end{tikzpicture}
 \end{center}

\end{remark}
\begin{proof}
It is routine to verify that there is an UDAF folding from the left digraph above to the right digraph above defined by
\[a\to a \quad \text{ and }b\to bc.\] 
Hence these digraphs are strong UDAF equivalent. Alternatively one could show this using Theorem~\ref{matequivth}.

However these digraphs are not shift equivalent as for example the left digraph satisfies \(\mathcal{G}_D\leq \mathbb{Q}^1\) and the right digraph satisfies \(\Z^2 \leq \mathcal{G}_D\) (but \(\Z^2\) doesn't embed in \(\mathbb{Q}\)).
\end{proof}

\part{Open Problems}
As mentioned in the introduction, we are still left without an answer to the following question.
\begin{question}
If two UDAF digraphs are shift equivalent then are they necessarily strong UDAF equivalent?
\end{question}
If not then strong UDAF equivalence can be used to separate shift equivalence and strong shift equivalence in these cases.

We provide a ``small" means of generating UDAFG in the form of elementary UDAF equivalences.
Given Theorem~\ref{autgnr theorem}, the following question is natural.
\begin{question}
Can the category UDAFG be used to build ``small" generating sets for the groups \(\Out(G_{n, n-1})\) (\(n\geq 2\))?
\end{question}
Note that if \(n\geq 2\) is fixed and for all digraphs \(D\) which are strong UDAF equivalent to \(R_n\), we can make a single choice of UDAFG isomorphism \(\phi_D:(\rt(D), D) \to (\rt(R_n), R_n)\), then the set
\[\makeset{\phi_{D_1} \circ e \circ \phi_{D_2}^{-1}}{\(e:(\rt(D_1), D_1) \to (\rt(D_2), D_2)\) is\\ induced by an elementary UDAF folding}\]
is a generating set for \(\Aut_{UDAFG}(R_n) \cong \Out(G_{n, n-1})\). 
One can choose such isomorphisms \(\phi_D\) via the axiom of choice but this is rather unsatisfying.

The problem of checking strong UDAF equivalence via Theorem~\ref{matequivth}, can theoretically be made rather complex due to the need to resize the matrices involved. 
However when carrying out manual examples we have never had a problem with this.
In particular when modifying the matrices to equate Ashley's example with the 2-leaf rose we never need to make a matrix bigger and then smaller again (or vice versa).
Hence the following question.
\begin{question}
Suppose that \(M_1\) and \(M_2\) are UDAF relator equivalent UDAF relator matrices with the same size.
It is necessarily possible to convert \(M_1\) into \(M_2\) as in Theorem~\ref{matequivth} without using operation (1)?
\end{question}
Note that this is trivially satisfied when \(M_1\) and \(M_2\) have size \(1\).

\part{Appendix}
\section{\(\Out(G_{n, n-1})\) acts faithfully on routes}
As mentioned in Remark~\ref{appendix reference}, we now show that the action of the group \(\mathcal{O}_n\) in the set \(\rt(R_n)\) is faithful.

As \(\mathcal{O}_n\) is a group, we need only show that any element which acts as the identity map on \(\rt(R_n)\) is the identity. Unfortunately our proof is highly technical.

\begin{proposition}\label{injectivity}
Let \(n\geq 2\) be an integer. Suppose that \(T= (X_n, Q_T, \pi_T, \lambda_T)\in \mathcal{O}_n\) has synchronizing length \(k_T\).
If the map \(T\) induces on \(\rt(R_n)\) is the identity map, then \(T\) is the identity of \(\mathcal{O}_n\).
\end{proposition}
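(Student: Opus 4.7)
The plan is to pass to canonical past-future form, use the identity-on-routes condition together with conditions~(9) (injectivity with clopen image) and~(10) (invertibility) to force each state's output function to be the identity, and conclude via minimality (condition~(8)).

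By Lemma~\ref{big lem} applied to the UDAF folding $g$ of $UDAF(T) = (D, f, g)$ and the construction in the proof of Theorem~\ref{autgnr theorem}, I may assume $T$ is an $\mathcal{O}_n$-transducer in past-future form: state set $X_n^{k}$, transition $\pi_T(x, q)$ equal to the length-$k$ suffix of $qx$, and output function $\lambda_T : X_n \times X_n^k \to X_n^*$, still inducing the identity on $\rt(R_n)$. It then suffices to prove $\lambda_T(x, q) = x$ for every $(x, q) \in X_n \times X_n^k$, since this makes each state function $\lambda_T(\cdot, q)$ equal to the identity on $X_n^\omega$, and minimality (condition~(8)) then collapses $T$ to the identity one-state transducer of $\mathcal{O}_n$.

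Fix $q_0 \in X_n^k$ and any bi-infinite input $z = z_{<0} \cdot x$ whose past ends in $q_0$. With the convention that the output block $\lambda_T(z_0, q_0)$ begins at output position $0$, the bi-infinite output $y$ must equal $\sigma^{m(z)} z$ for some integer $m(z)$, since $T_*$ is the identity on routes. The forward output $y_{\geq 0} = \lambda_T(x, q_0)$ depends only on $(x, q_0)$, so the input positions $m(z), m(z) + 1, \ldots$ of $z$ appearing in $y_{\geq 0}$ must all lie in $\{-k, -k+1, \ldots\}$, forcing $m(z) \geq -k$. A block-by-block analysis using continuity of $\lambda_T(\cdot, q_0)$ then shows $\lambda_T(\cdot, q_0)$ is built by prepending a fixed $q_0$-dependent prefix (possibly empty) and then either shifting by some $m \geq 0$ or doing nothing. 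Condition~(9) rules out any shift by $m \geq 1$, since $\sigma^m$ is not injective on $X_n^\omega$, so $m(z) \leq 0$ throughout and $\lambda_T(x, q_0) = w_{q_0} \cdot x$ for a fixed prefix $w_{q_0}$ of length at most $k$ depending only on $q_0$.

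Applying the same analysis to $T^{-1} \in \mathcal{O}_n$ (which also acts as the identity on routes, since $T$ does) shows its shifts are non-positive as well. Since the composite $T \cdot T^{-1}$ is the identity transducer with zero shift, and shifts combine additively under composition, both $T$ and $T^{-1}$ must have zero shift pointwise. Hence $w_{q_0} = \varepsilon$ and $\lambda_T(x, q_0) = x$ for every $(x, q_0)$, completing the proof via minimality. The main obstacle I expect is the block-by-block analysis establishing the rigid structural form of $\lambda_T(\cdot, q_0)$, together with the ``shifts add under composition'' step: the per-input integer $m(z)$ is a priori input-dependent, and one must track it carefully along orbits of the composite transducer to deduce that $m_T(z) + m_{T^{-1}}(T(z)) = 0$ pointwise, and hence from non-positivity that both vanish identically.
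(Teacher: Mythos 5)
Your high-level strategy (the identity action on routes forces each state function \(\lambda_T(\cdot,q)\) to be a bounded displacement of the input, and invertibility then forces the displacement to vanish) is a reasonable intuition, but as written it leaves the entire technical content of the proposition unproved. The step ``a block-by-block analysis using continuity of \(\lambda_T(\cdot,q_0)\) then shows \(\lambda_T(\cdot,q_0)\) is built by prepending a fixed \(q_0\)-dependent prefix and then either shifting by some \(m\geq 0\) or doing nothing'' is exactly where all of the difficulty lives, and it is not carried out. A priori the displacement \(m(z)\) depends on \(z\): on a periodic input \(w^\infty\) read from its synchronized state \(q_w\) one only knows that \(\lambda_T(w,q_w)\) is a rotation of some power of \(w\), and the rotation amount can genuinely be nonzero for short \(w\) and can a priori differ between cycles. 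The paper's proof spends essentially its whole length on precisely this point: its Claim~1 rules out the ``stretching'' case \(\lambda_T(w,q_w)=(w^m)\sigma_n^{s_w}\) with \(m\geq 2\) by a substring-counting argument, and its Claim~2 forces \(s_w=0\) for all sufficiently long \(w\) via a five-case analysis of a carefully constructed test word. Note also that your claimed intermediate form is nearly equivalent to the conclusion: every state lies on a cycle by condition~(7), so reading a cycle \(w\) from \(q_w\) back to itself under the form \(\lambda_T(x,q)=w_q\cdot x\) outputs a word of length \(|w|+\sum_i |w_{q_i}|\), which the identity-on-routes condition forces to equal \(|w|\) once stretching is excluded; asserting that form is therefore essentially asserting the theorem.

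The second gap is the step ``shifts combine additively under composition, and the composite \(T\cdot T^{-1}\) is the identity transducer with zero shift.'' The product \(T\cdot T^{-1}\) equals the identity of \(\mathcal{O}_n\) only after minimisation and passage to the synchronizing core, and that passage does not preserve the pointwise displacement. For instance the ``delay'' transducer with state set \(X_n\), \(\pi(x,q_y)=q_x\) and \(\lambda(x,q_y)=y\) has displacement \(-1\) on every bi-infinite input, yet its minimal synchronizing core is the one-state identity transducer. So from ``the composite is trivial in \(\mathcal{O}_n\)'' you cannot conclude that the literal composite has zero displacement; that inference presupposes exactly the rigidity the proposition asserts. (You would instead need to argue directly that the core states of the literal composite have identity output functions on \(X_n^\omega\), which is not addressed.) The paper sidesteps both issues by arguing combinatorially about \(T\) alone, never invoking \(T^{-1}\), composition, or condition~(9); I would encourage you either to adopt that route or to supply the block-by-block analysis and a minimisation-invariance argument for the displacement in full detail.
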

\begin{proof}
Let \(d_1\geq 2\) be such that for all \(w\in X_n^{d_1}\) and \(q\in Q_T\) we have \(\lambda_T(w, q) \neq \varepsilon\).
Let \(d_2\in \N\) be such that \(|X_n|^{d_2}> d_2 + k_T + d_1\), and let \(d:= d_1+d_2\).

For all non-empty words \(w\in X_n^*\) let \(q_w\) be the unique state of \(T\) for which \(\pi_T(w, q_w)= q_w\). 
If \(w\in X_n^*\) and \(z\in \Z\), then we define \(w_z\in X_n\) by the equalities:
\[w=w_0w_1\ldots w_{|w|-1} \quad \text{ and }\quad w_z=w_{z\text{ mod }|w|}.\]
We also define a map \(\sigma_n: X_n^* \to X_n^*\) by 
\((w)\sigma_n=w_1\ldots w_{|w|-1}w_0\).

\underline{Claim 1:}
For all words \(w\in  X_n^*\), there is a minimum \(s_w\in \{0, 1,2, \ldots |w| - 1\}\) such that \(\lambda_T(w, q_{w})=(w)\sigma_n^{s_w}\).\\
\underline{Proof of Claim 1:}
Let \(w\in X_n^*\) and assume without loss of generality that \(w\) is not a power of any other word. 
Let \(b\in X_n\) be such that \(w\neq b\). 
As \(T\) fixes all routes, it follows that it fixes the route which consists of the word \(w\) repeated infinitely many times.
In particular, there is \(m\geq 1\) and \(s_w\in \Z\) such that \(\lambda_T(w, q_{w})=(w^m)\sigma_n^{s_w}\).
It therefore suffices to show that this \(m\) is \(1\).

Suppose for a contradiction that \(m\geq 2\). 
Consider the word \(B:= b^{|w|}w^{k_T}w^{2k_T +2}\).
Note that \(w^{3k_T+4}\) is not a contiguous substring of \(B\).
However, as \(m\geq 2\), it follows that if \(B\) is read from any state, a word with \(w^{ 4k_T +4}\) as a contiguous substring is written.
Thus \(T\) does not send the route consisting of \(B\) infinitely many times to itself, a contradiction.\(\diamondsuit\)

\underline{Claim 2:} If \(w\in X_n^{{k_T}+d}\), then \(s_w=0\).\\
\underline{Proof of Claim 2:} Suppose for a contradiction that \(s_w>0\). As \(T\) has no incomplete response, there exists a word \(v \in X_n^{d_1}\) such that 
\[w_{s_w}':=(\lambda_T(v,q_{w}))_0\] 
is not equal to \(w_{s_w}\). Let \(x\in X_n^{d_2}\) be such that \(x\) is not a contiguous subword of \(ww\) (this is possible by the choice of \(d_2\)).
Let \(w^{-}\in X_n^{k_T}\) be a suffix of \(w\) and let 
 \(b := v xw^{-}w^2\).
We have that \(q_b=q_{w}\) and \(|b|=3(d+{k_T})\).
Note that \(\lambda_T(b,q_{w}) = (b)\sigma_n^{s_b}\). 
Now choose some \(w_{s_w - 1}'\in X_n\backslash \{w_{s_w - 1}\}\).
Let
 \[B:=wb{w_{s_{w-1}}'}^{|b|}w\]
and note that the following things are true:
\begin{enumerate}
\item \(|B| = (d+{k_T})+3(d+{k_T})+3(d+{k_T})+(d+{k_T})=8(d+{k_T})\).
\item \(q_B=q_w\).
\item \(\lambda_T(B,q_{w}) = (B)\sigma_n^{s_B}\).
\item For \(i\in \{0,1,2 \ldots 8(d+{k_T})-1\}\) we have
\[B_i= \left\{\begin{array}{cc}
w_i    &  0\leq i < d+{k_T}\\
b_{i-(d+{k_T})}     & d+{k_T}\leq i < 4(d+{k_T})\\
w_{s_w-1}' & 4(d+{k_T}) \leq i<7(d+{k_T})\\
w_{i-7(d+{k_T})}& 7(d+{k_T}) \leq i< 8(d+{k_T})
\end{array}\right\}\]
    \item For \(i\in \{0,1, 2 \ldots 8(d+{k_T})-1\}\) we have
\[(\lambda_T(B,q_{w}))_i= \left\{\begin{array}{cc}
(\lambda_T(w,q_{w}))_i    &  0\leq i < d+{k_T}\\
(\lambda_T(b,q_{w}))_{i-(d+{k_T})}     & d+{k_T}\leq i < 4(d+{k_T})\\
(\lambda_T({w_{s_w-1}'}^{|b|}w,q_{w}))_{i-4(d+{k_T})}& 4(d+{k_T}) \leq i< 8(d+{k_T})
\end{array}\right\}\]
\end{enumerate}
We have \(\lambda_T(B, q_{w}) = (B)\sigma_n^{s_B}\), we now show that all possible values of \(s_B\) lead to contradictions:
\begin{enumerate}
\item If \(s_B= 0\), then \(\lambda_T(w, q_{w}) = w\) and thus \(s_w=0\), a contradiction as we assumed that \(s_w>0\).
\item If \(0< s_B \leq 3(d+{k_T})\), then by the choice of \(b\) we have
\begin{align*}
    w_{s_w-1} &= (\lambda_T(b, q_{w}))_{3(d + {k_T}) - 1}= (\lambda_T(B, q_{w}))_{4(d + {k_T}) - 1}=(B\sigma_n^{s_B})_{4(d+{k_T})- 1}\\
    &=B_{4(d+{k_T})- 1+s_B} = w_{s_w-1}'
\end{align*}
a contradiction.
\item If \(3(d+{k_T}) < s_B \leq 6(d+{k_T})\), then
\[w_{s_w-1} = (\lambda_T(B,q_{w}))_{d+{k_T}-1}=(B\sigma_n^{s_B})_{d+{k_T}-1} =B_{d+{k_T}-1+s_B}= w_{s_w-1}' \]
 a contradiction.
\item Suppose that \(6(d + {k_T})< s_B <7(d+ {k_T})\). For all \(i\) such that \(0\leq i< d_2\), we have
\[(\lambda_T(B, q_w))_{d+k_T+d_1+i -s_B}=(B\sigma_n^{s_B})_{d+k_T+d_1+i-s_B}=(B)_{d+k_T+d_1+i}=x_i\]
Thus \(\lambda_T(B, q_w)\) has a contiguous substring equal to \(x\) strictly between positions \(2(d+k_T)\) and \(4(d+k_T)\). So \(\lambda_T(b, q_w)\) has a contiguous substring equal to \(x\) strictly between positions \((d+k_T)\) and \(3(d+k_T)\). As between these positions, the string \(\lambda_T(b, q_w)\) is a rotation of the word \(w^2\), this contradicts the definition of \(x\).
\item Finally suppose \(7(d+k)\leq s_B < 8(d+{k_T})\). 
Using the fact that \(B\) both begins and ends with the word \(w\) we have:
\begin{align*}
w_{s_w}' &= (\lambda_T(B, q_{w}))_{d+{k_T}}\\
&=(B\sigma_n^{s_B})_{d+{k_T}}\\
&= B_{d+{k_T}+s_B}\\
&= B_{s_B}\\
&=(B\sigma_n^{s_B})_0\\
&= (\lambda_T(B, q_{w}))_0\\
&= w_{s_w}
\end{align*}
\end{enumerate}
The claim has now been proved \(\diamondsuit\)\\

If \(S\) is a non-empty collection of elements of \(X_n^* \cup X_n^\omega\), then we denote the longest common prefix of these words by \(\mathcal{P}(S)\).

We now show that for any state \(q\in Q\) and letter \(x\in X_n\) that \(\lambda_T(x, q)=x\).
As \(T\) has no incomplete response it suffices to show that \(x=\mathcal{P}(\lambda_T(xX_n^\omega, q))\). Let \(v\in X_n^{k_T}\) be such that \(q_v=q\).\\
\((\leq):\) Let \(\bar{x}\in xX_n^\omega\) and let \(x'\) be the first \(d\) letters of \(\bar{x}\). We have by definition of \(d\) that \(\lambda_T(x', q) \neq \varepsilon\). 
It follows from the claim that \(\lambda_T(x'v, q)=x'v\) so \(x\leq \lambda_T(x', q) < \lambda_T(\bar{x}, q)\). 
We now have that \(x\leq\mathcal{P}(\lambda_T(xX_n^\omega, q))\).\\
\((\geq):\)  Let \(a, b\in X_n^d\) be such that \(a_0 = b_0 =x, a_1 \neq b_1\). Let \(\bar{x}\in avX_n^\omega\) and \(\bar{y}\in bvX_n^\omega\). 
By the claim we have
\[\lambda_T(av, q)=av,\quad \lambda_T(bv, q)=bv.\]
So \(\mathcal{P}(\bar{x}, \bar{y}) = x\) and the result follows.
\end{proof}

\section{Ashley's example}
In this section we give an example as to how Theorem~\ref{matequivth} can be used to verify the strong UDAF equivalence of matrices.
\begin{example}[Jonathan Ashley's example, see Example 2.27 of \cite{kitchens1997symbolic}]
We show that the 2-leaf rose and Ashley's example

\[\left[\begin{smallmatrix}
    1 & 1& 0& 0& 0& 0& 0& 0\\
    0 & 1& 1& 0& 0& 0& 0& 0\\
    0 & 0& 0& 1& 0& 0& 1& 0\\
    0 & 0& 0& 0& 1& 0& 0& 1\\
    0 & 0& 1& 0& 0& 1& 0& 0\\
    0 & 0& 0& 0& 1& 0& 1& 0\\
    0 & 0& 0& 1& 0& 0& 0& 1\\
    1 & 0& 0& 0& 0& 1& 0& 0
\end{smallmatrix}\right]\]
are strong UDAF equivalent. It is currently unknown if these are strong shift equivalent. 
The above matrix is notably the sum of the two permutation matrices (12345678) and (1)(2)(374865).
\end{example}
\begin{proof}
We will show that both of the corresponding relator matrices are UDAF relator equivalent to the matrix
\(\left[\begin{smallmatrix}
    0 & 1& 0& 0\\
    0 & 0& 1& 0\\
    0 & 0& 0& 1\\
    1 & 0& 0& 0
\end{smallmatrix}\right].\)
 We first transform the 2-leaf rose:
 \allowdisplaybreaks
\begin{align*}
\left[\begin{smallmatrix}
    1
\end{smallmatrix}\right] &\rightarrow
\left[\begin{smallmatrix}
    1 & 0\\
    0 & -1
\end{smallmatrix}\right] \text{ add a cross}\\
&\leftarrow
\left[\begin{smallmatrix}
    1 & 1\\
    0 & -1
\end{smallmatrix}\right] \text{add row 2 to row 1 (as (1,2) entry is 1)}
\\
&\leftarrow
\left[\begin{smallmatrix}
    0 & 1\\
    1 & -1
\end{smallmatrix}\right] \text{add column 2 to column 1 (as (2, 1) entry is 1)}
\\
&\rightarrow
\left[\begin{smallmatrix}
    0 & 1\\
    1 & 0
\end{smallmatrix}\right]\text{ add row 1 to row 2 (as (2, 1) entry was 1)}\\
&\rightarrow
\left[\begin{smallmatrix}
    0 & 1 & 0\\
    1 & 0 & 0\\
    0 & 0 & -1
\end{smallmatrix}\right]\text{ add cross}\\
&\rightarrow
\left[\begin{smallmatrix}
    0 & 1 & 0 & 0\\
    1 & 0 & 0 & 0\\
    0 & 0 & -1& 0\\
    0 & 0 & 0& -1
\end{smallmatrix}\right]\text{ add cross}\\
&\leftarrow
\left[\begin{smallmatrix}
    0 & 1 & 0 & 0\\
    1 & 0 & 1 & 0\\
    0 & 0 & -1& 0\\
    0 & 0 & 0& -1
\end{smallmatrix}\right]\text{ add row 3 to row 2 (as (2, 3) entry is 1)}\\
&\leftarrow
\left[\begin{smallmatrix}
    0 & 1 & 0 & 0\\
    0 & 0 & 1 & 0\\
    1 & 0 & -1& 0\\
    0 & 0 & 0& -1
\end{smallmatrix}\right]\text{add column 3 to column 1 (as (3, 1) entry is 1)}\\
&\rightarrow
\left[\begin{smallmatrix}
    0 & 1 & 1 & 0\\
    0 & 0 & 1 & 0\\
    1 & 0 & -1& 0\\
    0 & 0 & 0& -1
\end{smallmatrix}\right]\text{add row 2 to row 1 (as (1, 2) entry was 1)}\\
&\rightarrow
\left[\begin{smallmatrix}
    0 & 1 & 1 & 0\\
    0 & 0 & 1 & 0\\
    1 & 0 & 0& 0\\
    0 & 0 & 0& -1
\end{smallmatrix}\right]\text{add column 1 to column 3 (as (1, 3) entry was 1)}\\
&\leftarrow
\left[\begin{smallmatrix}
    0 & 1 & 1 & 0\\
    0 & 0 & 1 & 0\\
    1 & 0 & 0& 1\\
    0 & 0 & 0& -1
\end{smallmatrix}\right]\text{add row 4 to row 3 (as (3, 4) entry is 1)}
\\
&\leftarrow
\left[\begin{smallmatrix}
    0 & 1 & 1 & 0\\
    0 & 0 & 1 & 0\\
    0 & 0 & 0& 1\\
    1 & 0 & 0& -1
\end{smallmatrix}\right]\text{add column 4 to column 1 (as (4, 1) entry is 1)}\\
&\rightarrow
\left[\begin{smallmatrix}
    0 & 1 & 1 & 1\\
    0 & 0 & 1 & 0\\
    0 & 0 & 0& 1\\
    1 & 0 & 0& -1
\end{smallmatrix}\right]\text{add row 3 to row 1 (as (1, 3) entry was 1)}\\
&\rightarrow
\left[\begin{smallmatrix}
    0 & 1 & 1 & 1\\
    0 & 0 & 1 & 0\\
    0 & 0 & 0& 1\\
    1 & 0 & 0& 0
\end{smallmatrix}\right]\text{add column 1 to column 4 (as (1, 4) entry was 1)}\\
&\leftarrow
\left[\begin{smallmatrix}
    0 & 1 & 0 & 1\\
    0 & 0 & 1 & 0\\
    0 & 0 & 0& 1\\
    1 & 0 & 0& 0
\end{smallmatrix}\right]\text{add column 2 to column 3 (Lemma~\ref{safematops})}\\
&\leftarrow
\left[\begin{smallmatrix}
    0 & 1 & 0 & 0\\
    0 & 0 & 1 & 0\\
    0 & 0 & 0& 1\\
    1 & 0 & 0& 0
\end{smallmatrix}\right]\text{add column 2 to column 4 (Lemma~\ref{safematops})}.
\end{align*}
We now convert the Ashley example
\allowdisplaybreaks
\begin{align*}
    \left[\begin{smallmatrix}
    0 & 1& 0& 0& 0& 0& 0& 0\\
    0 & 0& 1& 0& 0& 0& 0& 0\\
    0 & 0& -1& 1& 0& 0& 1& 0\\
    0 & 0& 0& -1& 1& 0& 0& 1\\
    0 & 0& 1& 0& -1& 1& 0& 0\\
    0 & 0& 0& 0& 1& -1& 1& 0\\
    0 & 0& 0& 1& 0& 0& -1& 1\\
    1 & 0& 0& 0& 0& 1& 0& -1
\end{smallmatrix}\right]
&\rightarrow
    \left[\begin{smallmatrix}
    0 & 1& 0& 0& 0& 0& 0& 0\\
    0 & 0& 0& 1& 0& 0& 1& 0\\
    0 & 0& -1& 1& 0& 0& 1& 0\\
    0 & 0& 0& -1& 1& 0& 0& 1\\
    0 & 0& 1& 0& -1& 1& 0& 0\\
    0 & 0& 0& 0& 1& -1& 1& 0\\
    0 & 0& 0& 1& 0& 0& -1& 1\\
    1 & 0& 0& 0& 0& 1& 0& -1
\end{smallmatrix}\right] \text{add row 3 to row 2}\\
&\rightarrow
    \left[\begin{smallmatrix}
    0 & 1& 0& 0& 0& 0& 0& 0\\
    0 & 0& 0& 1& 0& 0& 1& 0\\
    0 & 0& -1& 1& 0& 0& 1& 0\\
    0 & 0& 0& -1& 1& 0& 0& 1\\
    0 & 0& 0& 1& -1& 1& 1& 0\\
    0 & 0& 0& 0& 1& -1& 1& 0\\
    0 & 0& 0& 1& 0& 0& -1& 1\\
    1 & 0& 0& 0& 0& 1& 0& -1
\end{smallmatrix}\right] \text{add row 3 to row 5}\\
&\rightarrow
    \left[\begin{smallmatrix}
    0 & 1& 0& 0& 0& 0& 0\\
    0 & 0& 1& 0& 0& 1& 0\\
    0 & 0& -1& 1& 0& 0& 1\\
    0 & 0& 1& -1& 1& 1& 0\\
    0 & 0& 0& 1& -1& 1& 0\\
    0 & 0& 1& 0& 0& -1& 1\\
    1 & 0& 0& 0& 1& 0& -1
\end{smallmatrix}\right] \text{delete column 3 (Lemma~\ref{delting crosses})}\\
&\rightarrow
\left[\begin{smallmatrix}
    0 & 1& 0& 0& 0& 0& 0\\
    0 & 0& 1& 0& 0& 1& 0\\
    0 & 0& 0& 0& 1& 1& 1\\
    0 & 0& 1& -1& 1& 1& 0\\
    0 & 0& 0& 1& -1& 1& 0\\
    0 & 0& 1& 0& 0& -1& 1\\
    1 & 0& 0& 0& 1& 0& -1
\end{smallmatrix}\right]\text{add row 4 to row 3}\\
&\rightarrow
\left[\begin{smallmatrix}
    0 & 1& 0& 0& 0& 0& 0\\
    0 & 0& 1& 0& 0& 1& 0\\
    0 & 0& 0& 0& 1& 1& 1\\
    0 & 0& 1& -1& 1& 1& 0\\
    0 & 0& 1& 0& 0& 2& 0\\
    0 & 0& 1& 0& 0& -1& 1\\
    1 & 0& 0& 0& 1& 0& -1
\end{smallmatrix}\right]\text{add row 4 to row 5}\\
&\rightarrow
\left[\begin{smallmatrix}
    0 & 1& 0&  0& 0& 0\\
    0 & 0& 1&  0& 1& 0\\
    0 & 0& 0&  1& 1& 1\\
    0 & 0& 1&  0& 2& 0\\
    0 & 0& 1&  0& -1& 1\\
    1 & 0& 0&  1& 0& -1
\end{smallmatrix}\right]\text{delete column 4 (Lemma~\ref{delting crosses})}\\
&\rightarrow
\left[\begin{smallmatrix}
    0 & 1& 0&  0& 0& 0\\
    0 & 0& 1&  0& 1& 0\\
    0 & 0& 0&  1& 1& 1\\
    0 & 0& 1&  0& 2& 0\\
    1 & 0& 1&  1& -1& 0\\
    1 & 0& 0&  1& 0& -1
\end{smallmatrix}\right]\text{add row 6 to row 5}\\
&\rightarrow
\left[\begin{smallmatrix}
    0 & 1& 0&  0& 0& 0\\
    0 & 0& 1&  0& 1& 0\\
    1 & 0& 0&  2& 1& 0\\
    0 & 0& 1&  0& 2& 0\\
    1 & 0& 1&  1& -1& 0\\
    1 & 0& 0&  1& 0& -1
\end{smallmatrix}\right]\text{add row 6 to row 3}\\
&\rightarrow
\left[\begin{smallmatrix}
    0 & 1& 0&  0& 0\\
    0 & 0& 1&  0& 1\\
    1 & 0& 0&  2& 1\\
    0 & 0& 1&  0& 2\\
    1 & 0& 1&  1& -1\\
\end{smallmatrix}\right]\text{delete column 6 (Lemma~\ref{delting crosses})}\\
&\rightarrow
\left[\begin{smallmatrix}
    0 & 1& 0&  0& 0\\
    1 & 0& 2&  1& 0\\
    1 & 0& 0&  2& 1\\
    0 & 0& 1&  0& 2\\
    1 & 0& 1&  1& -1\\
\end{smallmatrix}\right]\text{add row 5 to row 2}\\
&\rightarrow
\left[\begin{smallmatrix}
    0 & 1& 0&  0& 0\\
    1 & 0& 2&  1& 0\\
    2 & 0& 1&  3& 0\\
    0 & 0& 1&  0& 2\\
    1 & 0& 1&  1& -1\\
\end{smallmatrix}\right]\text{add row 5 to row 3}\\
&\rightarrow
\left[\begin{smallmatrix}
    0 & 1& 0&  0& 0\\
    1 & 0& 2&  1& 0\\
    2 & 0& 1&  3& 0\\
    2 & 0& 3&  2& 0\\
    1 & 0& 1&  1& -1\\
\end{smallmatrix}\right]\text{add row 5 to row 4 (twice)}\\
&\rightarrow
\left[\begin{smallmatrix}
    0 & 1& 0&  0\\
    1 & 0& 2&  1\\
    2 & 0& 1&  3\\
    2 & 0& 3&  2\\
\end{smallmatrix}\right]\text{delete column 5 (Lemma~\ref{delting crosses})}\\
&\leftarrow
\left[\begin{smallmatrix}
    0 & 1& 0&  0\\
    1 & 0& 2&  0\\
    2 & 0& 1&  1\\
    2 & 0& 3&  0\\
\end{smallmatrix}\right]\text{add column 1 to column 4 (Lemma~\ref{safematops})}\\
&\leftarrow
\left[\begin{smallmatrix}
    0 & 1& 0&  0\\
    1 & 0& 2&  0\\
    0 & 0& 1&  1\\
    2 & 0& 3&  0\\
\end{smallmatrix}\right]\text{add column 4 to column 1 (twice) (Lemma~\ref{safematops})}\\
&\leftarrow
\left[\begin{smallmatrix}
    0 & 1& 0&  0\\
    1 & 0& 2&  0\\
    0 & 0& 0&  1\\
    2 & 0& 3&  0\\
\end{smallmatrix}\right]\text{add column 4 to column 3 (Lemma~\ref{safematops})}\\
&\leftarrow
\left[\begin{smallmatrix}
    0 & 1& 0&  0\\
    1 & 0& 1&  0\\
    0 & 0& 0&  1\\
    2 & 0& 1&  0\\
\end{smallmatrix}\right]\text{add column 1 to column 3 (Lemma~\ref{safematops})}\\
&\leftarrow
\left[\begin{smallmatrix}
    0 & 1& 0&  0\\
    0 & 0& 1&  0\\
    0 & 0& 0&  1\\
    1 & 0& 1&  0\\
\end{smallmatrix}\right]\text{add column 3 to column 1 (Lemma~\ref{safematops})}\\
&\leftarrow
\left[\begin{smallmatrix}
    0 & 1& 0&  0\\
    0 & 0& 1&  0\\
    0 & 0& 0&  1\\
    1 & 0& 0&  0\\
\end{smallmatrix}\right]\text{add row 2 to row 4 (Lemma~\ref{safematops})}\\
\end{align*}
\end{proof}
\bibliographystyle{plainyr}
\bibliography{bib.bib}

\end{document}